\newtheorem{theorem}{Theorem}[section]
\newtheorem{corollary}[theorem]{Corollary}
\newtheorem{lemma}[theorem]{Lemma}
\newtheorem{remark}[theorem]{Remark}
\def\rr{{\mathbb R}}
\def\R{{\mathbb R}}
\def\zz{{\mathbb Z}}
\def\nn{{\mathbb N}}
\def\C{{\mathbb C}}
\def\w{\widehat}
\def\M{\mathbf{M}}
\def\I{\mathbf{I}}
\def\T{\mathbf{T}}
\def\K{\mathbf{K}}
\def\k{\mathbf{k}}
\def\({\left(}
\def\){\right)}
\def\[{\left[}
\def\]{\right]}
\def\<{\left<}
\def\>{\right>}
\def\R{\mathbb R}
\def\C{\mathbb C}
\def\Z{\mathbb Z}
\def\N{\mathbb N}
\def\S{\mathscr S}
\def\supp{\text{supp}}
\def\less{\lesssim}
\DeclareSymbolFont{bbold}{U}{bbold}{m}{n}
\DeclareSymbolFontAlphabet{\mathbbn}{bbold}
\def\Q{\mathcal{Q}}
\def\p{\widetilde{p}}
\def\q{\widetilde{q}}
\def\lpq{L^pL^q}
\begin{document}

\title[Smoothing bilinear operators and  Leibniz-type rules]
{Smoothing properties of bilinear operators and Leibniz-type  rules in Lebesgue and  mixed Lebesgue spaces}

\bigskip

\author[J. Hart, R.H. Torres, X. Wu]
{Jarod Hart, Rodolfo H. Torres, Xinfeng Wu}

\address{Higuchi Biosciences Center\\
University of Kansas\\
 Lawrence, KS 66047}
\email{jvhart@ku.edu}

\address{Department of Mathematics\\
University of Kansas\\
 Lawrence, KS 66045-7523}
\email{torres@ku.edu}

\address{Department of Mathematics\\
University of Kansas\\
 Lawrence, KS 66045-7523}
\email{wuxf@ku.edu}

\thanks{}

\subjclass[2010]{Primary: 42B20; Secondary: 42B15, 47G99}
\keywords{Bilinear operators, multipliers, maximal function, smoothing properties, fractional derivatives, Leibniz rule, mixed Lebesgue spaces}

\begin{abstract}
We prove that bilinear fractional integral operators and similar multipliers are smoothing in the sense that
they improve the regularity of functions. We also treat bilinear singular multiplier operators which preserve regularity and obtain several Leibniz-type rules in the contexts of Lebesgue and mixed Lebesgue spaces.
\end{abstract}

\maketitle

%
\section{Introduction}

Let $K_\nu$ be an integral operator  of order $-\nu$. That is, let $K_\nu$ be of the form
\begin{equation}\label{nu}
K_\nu f(x) = \int_{\R^n} k_\nu (x,y) f(y)\,dy,
\end{equation}
where the kernel satisfies the estimate
$
|k_\nu (x,y) | \lesssim \frac {1}{|x-y|^{n-\nu}},
$
for some $0<\nu<n$. It is easy to see that $K_\nu$ is {\it smoothing}, or rather {\it improving}, in the scale of Lebesgue spaces, in the sense that it maps a Lebesgue space into another one with a larger exponent. More precisely,
$$
K_\nu:L^p \to L^q
$$
provided $0<1/q=1/p - \nu/n<1$. Under suitable additional regularity and cancellation conditions  (see e.g. \cite{T}), such $K_\nu$ is also smoothing in the Sobolev scale. Namely
$$
K_\nu:L^p \to \dot W^{\nu,p}
$$
where $\dot W^{\nu,p}$ is the homogeneous Sobolev space of functions with their derivative of order $\nu$ in $L^p$ (the precise definitions of all function spaces used in this article are given in Section 2 below). This is a stronger smoothing property, since by Sobolev embedding $\dot W^{\nu,p} \subset L^q$, when $p$ and $q$ are related as above. Of course, the most classical situation is that of the Riesz potential operators
$$
I_\nu f(x) = c_\nu \int_{\R^n} \frac {1}{|x-y|^{n-\nu}}f(y)\,dy,
$$
where the constant $c_\nu$ is selected so that the Fourier transform of $I_\nu f$ is given by $$\widehat{I_\nu f}(\xi) = |\xi|^{-\nu}\widehat f(\xi).$$

It is immediate that by defining $\widehat {D^sf}(\xi) = |\xi|^{s}\widehat f(\xi)$ we have  for $s<\nu$,
\begin{equation} \label{calculus}
D^s I_\nu = I_{\nu-s}.
\end{equation}

Formally, the case $\nu=0$ in \eqref{nu} corresponds to Calder\'on-Zygmund operators which are no longer smoothing, but a slight modification of this simple calculus in \eqref{calculus} still holds for convolution operators. For example for $n>1$,
\begin{equation}\label{cal2}
\partial_j I_1f=R_jf
\end{equation}
where for  $j=1,\ldots,n$,  $R_j$ are the Riesz transforms in $\R^n$ given by the multiplier  $\widehat{R_jf}(\xi)= -i \xi_j |\xi|^{-1}\w f (\xi)$. As operators {\it of order zero}, the Riesz transforms $R_j$ are not smoothing, but since they commute with derivative,
\begin{equation}\label{cal3}
 D^s (R_jf)= R_j(D^sf),
 \end{equation}
they preserve both Lebesgue and Sobolev spaces for $1<p<\infty$.

Properly interpreted the calculus in  \eqref{calculus}--\eqref{cal3} extends not only to other multiplier operators, but also beyond the convolution case to several classes of pesodifferential operators and even more general non-convolution operators of Calder\'on-Zygmund type (see e.g the book by Stein \cite{S} for several results and references to the vast literature in the subject).

In this article, we are interested in stating and proving analogous versions of \eqref{calculus}--\eqref{cal3} for bilinear multiplier operators, improving and extending numerous results already in the literature in the subject and uncovering several completely new ones. The prototypes for our work for
$0<\nu<2n$  will be bilinear fractional  integral operators, while for $\nu=0$ they will be Coifman-Meyer multipliers. We will obtain, however,  results for more general operators under minimal regularity assumptions on the multiplier which do not allow for pointwise smooth estimates on their corresponding kernels.

The bilinear fractional integral operators are defined for $0<\nu<2n$ by
$$
\I_\nu(f,g)(x)=C_\nu \int_{\R^{2n}}\frac{1}{(|x-y|^2+|x-z|^2)^{(2n-\nu)/2}}f(y)g(z)\,dy dz.
$$
The constant $C_\nu$ is chosen again so that, using the Fourier transform, we have the representation
$$
{\I_\nu(f,g)}(x) = \int_{\R^{2n}}\frac{1}{(|\xi|^2+|\eta|^2)^{\nu/2}}e^{-ix(\xi+\eta)}\w f(\xi)\w g(\eta)\,d\xi d\eta.
$$
More generally we can consider for  $0\leq\nu<2n$ bilinear multipliers of the form
$$
\T_{m_\nu}(f,g)(x) = \int_{\R^{2n}}m_\nu(\xi,\eta) e^{-ix(\xi+\eta)}\w f(\xi)\w g(\eta)\,d\xi d\eta,
$$
where
\begin{equation}\label{verysmooth}
|\partial^\beta_\xi\partial^{\gamma}_\eta  m_\nu(\xi,\eta)| \lesssim_{\beta\gamma} (|\xi|+|\eta|)^{-\nu-|\beta|-|\gamma|}.
\end{equation}
Note that we are allowing now $\nu=0$, which corresponds to the case of the nowadays classical Coifman-Meyer multipliers.  We will actually  treat multipliers where the pointwise regularity estimates in \eqref{verysmooth} are replaced by H\"ormander-type ones using only appropriate Sobolev space regularity.

Roughly speaking, if $\T_\nu$ is a bilinear  operator of order $-\nu$ described above, we will show that $\T_\nu(f,g)$ {\it has $\nu$ more derivatives than $f$ and $g$} (hence it is smoothing if $\nu>0$).  Our main results could be interpreted by saying that 
\begin{equation}\label{cal8}
D^s\T_\nu(f,g) \sim \T_{0}(D^{s-\nu} f,g) + \T_{0}(f,D^{s-\nu} g),
\end{equation}
where $T_{0}$ is an operator of order $0$.

In making these informal statements precise, we need to review some of the existing literature alluded to before. Our recount is not intended to be exhaustive, but we shall rather point out some of the results most closely related to ours.
As it will be clear from our narrative below, there is a high level of interest in the subject and a very active community working on similar problems. Several overlapping recent results have been obtained independently by different authors.

As already mentioned, for $\nu=0$ the operators in \eqref{verysmooth} are Coifman-Meyer multipliers as studied by those authors in \cite{C-M1}-\cite{C-M4}. They are examples of operators within the multilinear Calder\'on-Zygmund theory further developed by Christ-Journ\'e \cite{CJ}, Kenig-Stein \cite{KS} and Grafakos-Torres \cite{GT}. In particular, bilinear Calder\'on-Zygmund operators are operators of the form
\begin{equation}\label{kernelones}
\K (f,g)(x) = \int_{\R^{2n}} \k (x,y,z) f(y)g(z)\,dydz
\end{equation}
for $x\notin \supp f \cap \supp g$, where the kernel satisfies the estimates
\begin{equation}\label{kernelestimates}
|D^\alpha_x D^\beta_y D^\gamma_z \k(x,y,z)|\less (|x-y|+|x-z|)^{-2n-|\alpha|-|\beta|-|\gamma|},
\end{equation}
and such that they act as the product of functions on Lebesgue spaces, i.e,
$$
\K : L^{p_1}\times L^{p_2} \to L^q
 $$
 for $1<p_1,p_2<\infty$, $\frac{1}{p_1}+\frac{1}{p_2}=\frac{1}{q}$ (appropriate end-point results hold too).
Other examples of these operators are provided by bilinear pesuodifferential operators of order zero. For $m \in \R$, a bilinear pseudodifferential operator of order $m$ is given by
$$
P_{a_m}(f,g)(x) =  \int_{\R^{2n}}a_m(x,\xi,\eta) e^{-ix(\xi+\eta)}\w f(\xi)\w g(\eta)\,d\xi d\eta,
$$
where
\begin{equation}\label{verysmoothsymbol}
|\partial^\rho_x\partial^\beta_\xi\partial^{\gamma}_\eta  a_m(x,\xi,\eta)| \lesssim (1+|\xi|+|\eta|)^{m-|\beta|-|\gamma|}.
\end{equation}
B\'enyi-Torres \cite{BT} showed that for $m =0$ these bilinear Calder\'on-Zygmund operators also satisfy for $s>0$
and $1/p_1 +1/p_2 =1/q <1$ 
the estimate
$$
\|J^sP_{a_0}(f,g)\|_{L^q} \lesssim \|J^s f\|_{L^{p_1}}\| g\|_{L^{p_2}} + \| f\|_{L^{p_1} }\|J^s g\|_{L^{p_2}},
$$
 where $J^s$ is the {\it inhomogeneous derivative operator} $$\w {(J^sf)}(\xi) = (1+|\xi|^2)^{s/2}\w f(\xi).$$ Moreover, along the lines of \eqref{cal8}, B\'enyi-Nahmod-Torres \cite{BNT} showed that for a symbol of order $m>0$,
\begin{equation}\label{cal9}
P_{a_m}(f,g)= P_{b_0}(J^mf,g) + P_{c_0}(f,J^mg),
\end{equation}
for some symbols of order zero $b_0$ and $c_0$, which gives then
$$
\|P_{a_m}(f,g)\|_{L^q} \lesssim \|J^m f\|_{L^{p_1}}\| g\|_{L^{p_2}} + \| f\|_{L^{p_1} }\|J^m g\|_{L^{p_2}},
$$
for all $1<p_1, p_2<\infty$ and $\frac{1}{p_1}+\frac{1}{p_2}=\frac{1}{q}$. This idea goes back to the work of Kato-Ponce \cite{KP}.
Similar estimates for more general classes of symbols were given by B\'enyi et al \cite{BMNT} and \cite{BBMNT} and Naibo \cite{N}. Several classes of operators in the H\"ormander classes $BS^m_{\rho,\delta}$ given by symbols satisfying the differential inequalities
\begin{equation}\label{generalsymbols}
|\partial^\alpha_x\partial^\beta_\xi\partial^{\gamma}_\eta  a_m(x,\xi,\eta)| \lesssim (1+|\xi|+|\eta|)^{m+\rho|\alpha|-\delta(|\beta|+|\gamma|)},
\end{equation}
for $0\leq \rho, \delta\leq 1$ were considered in those works. In particular, it was shown in \cite{N} that the boundedness
$L^{p_1} \times L^{p_1} \to L^{p}$ with $1/p_1+1/p_2 =1/p$, $1<p_1,p_2<\infty$ of an operator with symbol in a class $BS^m_{\rho,\delta}$, automatically implies its boundedness on Besov spaces with positive smoothness and based on the same $L^p$ exponents. It was also proved in \cite{N} that the same result is true for any bilinear multiplier operator mapping $L^{p_1} \times L^{p_1} \to L^{p}$.  A similar result for multipliers was obtained in \cite{BNT} in the scale of Sobolev spaces but with $p>1$.

The boundedness properties of the operators  $ \I_\nu$ in the scale of Lebesgue spaces were studied by  Kenig-Stein \cite{KS}. They showed that
\begin{equation}\label{basicfrac}
 \I_\nu:
 L^{p_1}\times L^{p_2} \to L^q
 \end{equation}
 for $1<p_1,p_2<\infty$, $0<\frac{1}{p_1}+\frac{1}{p_2}-\frac{\nu}{n}=\frac{1}{q}$, and $0<\nu<2n$.  Bernicot et al \cite{BMMN} looked at bilinear pseudodifferential  operators $P_{a_m}$ with $m<0$ and also homogeneous version $\dot P_{a_m}$, where the estimates in \eqref{verysmoothsymbol} are modified by replacing $(1+|\xi|+|\eta|)$ with $(|\xi|+|\eta|)$.  In particular, the operators $\I_\nu$ (or more generally $\T_{m_\nu}$ satisfying \eqref{verysmooth}) are homogeneous bilinear pseudodifferential operators of order $m=-\nu$. The authors in \cite{BMMN} showed, using a calculus similar to \eqref{cal9}, that
\begin{equation}\label{lpsmooth}
\|\I_{\nu}(f,g)\|_{L^q} \lesssim \|D^{s-\nu}f\|_{L^{p_1}}\| g\|_{L^{p_2}} + \| f\|_{L^{p_1} }\|D^{s-\nu} g\|_{L^{p_2}}
\end{equation}
 if $1<p_1,p_2<\infty$, $0<\frac{1}{p_1}+\frac{1}{p_2}-\frac{s}{n}=\frac{1}{q}$, and $0<s<2n$ and $\nu \leq s$.  We will show that actually
 \begin{equation}\label{sobolevsmooth}
\|D^s\I_{\nu}(f,g)\|_{L^p} \lesssim \|D^{s-\nu}f\|_{L^{p_1}}\| g\|_{L^{p_2}} + \| f\|_{L^{p_1} }\|D^{s-\nu} g\|_{L^{p_2}},
\end{equation}
 if $1<p_1,p_2<\infty$, $\frac{1}{p_1}+\frac{1}{p_2}=\frac{1}{p}$, $0<\nu<2n$ and $s>\max(0,\frac{n}{p}-n)$. This is now a smoothing property on the Sobolev scale and by Sobolev embedding an improvement of \eqref{lpsmooth} for some range of the exponents. In particular,
 \begin{equation}\label{basicsobolev}
 \I_\nu:
 L^{p_1}\times L^{p_2} \to \dot W^{\nu,p}
 \end{equation}
 for $1<p_1,p_2<\infty$, $0<\frac{1}{p_1}+\frac{1}{p_2}=\frac{1}{p}$, and $0<\nu<2n$, improving \eqref{basicfrac}.

We point out that other smoothing-type estimates have been proved for the bilinear fractional integral operators before.  For example, in \cite{AHIV}, Aimar et al proved that the $\I_\nu$ maps from products of Lebesgue spaces with appropriate indices into certain Campanato-$BMO$ type spaces when $\frac{1}{p_1}+\frac{1}{p_2}\leq \frac{\nu}{n}$.   Such spaces provide the right setting when working on spaces of homogeneous type.
More recently, Chaffee-Hart-Oliveira \cite{CJO} showed using different methods that
\begin{equation}\label{cjo}
\I_\nu :L^{p_1}\times L^{p_2} \to I_s(BMO),
\end{equation}
for certain ranges of $1<p_1,p_2<\infty$ and $0\leq s<\nu$ satisfying $\frac{1}{p_1}+\frac{1}{p_2}=\frac{\nu-s}{n}$. Note that \eqref{basicsobolev} is also an improvement of \eqref{cjo} whenever $\nu-s<n$ since $\dot W^{\nu,p} \subset I_s(BMO)$  if  $\frac{1}{p}=\frac{\nu-s}{n}<1$. The results in \cite{CJO}, however, apply to a larger range of exponents and also to more general operators that we cannot cover with our techniques.

The estimate  \eqref{sobolevsmooth}, and hence \eqref{basicsobolev}, hold for the multipliers $\T_{m_\nu}$ as well,
 \begin{equation}\label{sobolevsmoothT}
\|D^s{\T_{m_\nu}}(f,g)\|_{L^p} \lesssim \|D^{s-\nu}f\|_{L^{p_1}}\| g\|_{L^{p_2}} + \| f\|_{L^{p_1} }\|D^{s-\nu} g\|_{L^{p_2}},
\end{equation}
and we can allow the Coifman-Meyer case $\nu=0$ too. We note that after our work was completed we received an independent preprint from Brummer-Naibo \cite{BN} dealing with homogeneous pseudodifferential operators of different orders. Their results can be applied to smooth multipliers too, obtaining estimates similar to \eqref{sobolevsmoothT}. The techniques employed by these authors, however, are very different from ours. They rely on smooth molecular decompositions. To some extent, they are a bilinear counterpart of the results by Torres \cite{T1} and Grafakos-Torres \cite{GT0} in the linear case.  The results in \cite{BN} apply also to $x$-dependent smooth symbols, which cannot be treated by our methods, but the multipliers we study have very limited amount of regularity and, as far as we know, estimates involving smooth molecular decompositions require pointwise smoothness on the symbols.

Taking $m_0=1$, \eqref{sobolevsmoothT}   leads to the already known Leibniz rule
 \begin{equation}\label{classical}
\|D^s(f g)\|_{L^p} \lesssim \|D^{s}f\|_{L^{p_1}}\| g\|_{L^{p_2}} + \| f\|_{L^{p_1} }\|D^{s} g\|_{L^{p_2}}.
\end{equation}
This estimate also has a long history starting with works of Kato-Ponce \cite{KP} and Christ-Weinstein \cite{CW}. The validity of the rule for the optimal range of exponents  $1/2 < p< \infty$, $1<p_1,p_2\leq\infty$, $\frac{1}{p}=\frac{1}{p_1}+\frac{1}{p_2}$ and $s>\max (0, \frac{n}{p}-n)$
or $s$ a positive even integer, was finally settled by Muscalu-Schlag \cite{MS} and Grafakos-Oh \cite{GO}. We refer to \cite{GO} for previous works, additional weak-type estimates, and counterexamples for the limitations on $s$. The case $p_1=p_2=p= \infty$ was then further considered by Grafakos-Maldonado-Naibo \cite{GMN} and completely resolved by Bourgain-Li \cite{BL}.

Motivated by applications in time-dependent partial differential equations, there has also been some interest in obtaining Leibniz rules in the mixed Lebesgue spaces $L^p_tL^q_x(\rr^{n+1})$. The first such result involved a commutator estimates with fractional derivatives only in the space variable $x$ and was obtain by Kenig-Ponce-Vega \cite{KPV}. Torres-Ward \cite{TW} obtain then a result with the full derivatives in all variables. Denoting by $D_{t,x}^s$ the fractional derivatives in $\rr^{n+1}$, it was a shown in \cite{TW} that
\begin{equation}\label{primera}
\|D_{t,x}^s(fg)\|_{L^pL^q} \lesssim \|f\|_{L^{p_1}L^{q_1}}\|D_{t,x}^sg\|_{L^{p_2}L^{q_2}} + \|D_{t,x}^sf\|_{L^{p_1}L^{q_1}} \|g\|_{L^{p_2}L^{q_2}},
\end{equation}
for  $1<p,q,p_1,q_1, p_2,q_2<\infty$, $\frac{1}{p}=\frac{1}{p_1}+\frac{1}{p_2}$, $\frac{1}{q}=\frac{1}{q_1}+\frac{1}{q_2}$, and $s>0$. Notice that this restricts the target indices $p,q$ to be larger than $1$.  In this article, we adapt  the arguments in \cite{GO} to mixed Lebesgue spaces and obtain
\begin{align}\label{multipliermixlebesgue}
 & \|D_{t,x}^sT_{m_\nu}(f,g)\|_{L^{p,q}} \nonumber \\
&\hspace {2cm}  \lesssim  \|D_{t,x}^{s-\mu} f\|_{L^{p_1}L^{q_1}}\|g\|_{L^{p_2}L^{q_2} }
  +\|f\|_{L^{p_1}L^{q_1}}\|D_{t,x}^{s-\mu}g\|_{L^{p_2}L^{q_2}}
\end{align}
for  $0\le\nu<2n+2$, $1<p_i,q_i<\infty$, $i=1,2$, $1/p=1/p_1+1/p_2$, $1/q=1/q_1+1/q_2$,
and $s\in 2\nn$ or $s>\max(0,\frac{n+1}{p}-(n+1),\frac{n+1}{q}-(n+1))$.  In particular the case $\nu=0$ in \eqref{multipliermixlebesgue} can be used to extend \eqref{primera} to the full range $1/2<p,q<\infty$ for the appropriate values of $s$.

We mention that other authors have considered mixed derivatives variations of \eqref{primera} too. When $n=1$, let $D_x^s$ and $D_t^s$ be the fractional derivatives in the respective one-dimensional variables $x$ and $t$. Benea-Muscalu \cite{BM} showed first that in $\rr^{1+1}$,
\begin{align}
 \|\, D_t^\beta D_x^\alpha & (fg)  \|_{L^pL^q}        \nonumber \\
  & \lesssim   \|f\|_{L^{p_1}L^{q_1}} \|\,D_t^\beta D_x^\alpha g\|_{L^{p_2}L^{q_2}} +  \|\,D_t^\beta  f\|_{L^{p_1}L^{q_1}}\| D_x^\alpha g\|_{L^{p_2}L^{q_2}}    \label{mixderivatives} \\
& \hspace{3mm} +  \|\,D_x^\alpha  f\|_{L^{p_1}L^{q_1}}\| D_t^\beta g\|_{L^{p_2}L^{q_2}}    +  \|\,D_t^\beta D_x^\alpha f\|_{L^{p_1}L^{q_1}}\|g\|_{L^{p_2}L^{q_2}}.  \nonumber
\end{align}
for $\alpha, \beta>0$,  $1< p_j, q_j \leq \infty$, $1\leq p,q<\infty$, $\frac {1}{q} = \frac{1}{q_1} + \frac{1}{q_2}$,   and 
$\frac {1}{p} = \frac{1}{p_1} + \frac{1}{p_2}$.
The authors also state that the result hold in higher dimensions. In the case of Lebesgue spaces the analog mixed derivative version of \eqref{classical} was previously studied by Muscalu et al \cite{MPTT}.

Using different methods, Di Plinio and Ou  \cite{DO} prove some multiplier results which implicitly allow them to extended \eqref{mixderivatives} to the case $1/2<p<\infty$ provided
$\alpha>\max(0,\frac{1}{p}-1)$  and $q\geq1$. Finally, we recently became aware of a new version \cite{BM2} of the work of Benea-Muscalu \cite{BM}, and another preprint \cite{BM3}  by the same authors  treating \eqref{mixderivatives} in the full quasi-Banach space case. The combined results of \cite{BM2} and \cite{BM3} allow for  $1/2<p<\infty$ and $1/2<q<\infty$ under the condition $\alpha, \beta >\max(0,\frac{1}{p}-1, \frac{1}{q}-1)$. Moreover, in an even more recent version \cite{BM4} the same authors reduced the condition on $\beta$ to  $\beta >\max(0, \frac{1}{q}-1)$.

We point out that neither \eqref{mixderivatives} implies \eqref{multipliermixlebesgue} nor the other way around. Our proof of \eqref{primera} is carried out in all dimensions $n$ and allows also $1/2<p,q\leq 1$.  In the context of mixed Lebesgue spaces, the version using full derivatives faces a new technical difficulty that forces us to consider versions of Hardy spaces in the mixed-norm setting. This does not seem to be the case in the mixed derivative situation, where one can iterate some vector valued estimates in $x$ and $t$ in some computations.  We believe our arguments  could be modified to give the mixed derivatives version \eqref{mixderivatives} of Benea-Muscalu for the full range of exponents too, but  we will not carry out such computations here.

We are able to treat multipliers
$\T_{m_\nu}$ with limited amount of regularity by applying some of the tools introduced by Tomita \cite{Tomita}, and further developed by Fujita-Tomita \cite{FT}, Grafakos-Si \cite{GS}, Grafakos-Miyachi-Tomita \cite{GMT}, Miyachi-Tomita \cite{MT}, and Li-Sun \cite{LS} for $\nu=0$, and Chaffee-Torres-Wu \cite{CTW} for $\nu> 0$. The techniques for the boundedness results of multipliers (or rather paraproducts) in  \cite{DO}, \cite{BM2}, and \cite{BM3} are then substantially different from ours. Once the boundedness of certain multiplier operators is established, the Leibniz rules follow by what are now familiar arguments, which also work on mixed Lebesgue spaces. As already mentioned, we follow the proof of the Leibniz rules in \cite{GO}, which also share some features with the ones used in \cite{MS}, \cite{BM2}, and \cite{BM3}, and the ones alluded to in \cite{DO}. One common ingredient is the important log estimate for the translated square function. The arguments given in \cite{GO} for such estimates immediately extend to the mixed-norm situation.

After the definitions in Section 2, all of the results involving multiplier operators in Lebesgue spaces are presented in Section~3.  Our main result there is Theorem \ref{multiplier 1}. We then extend in Section~4 the smoothing and Leibniz rule estimates for $\T_{m_\nu}$ to mixed Lebesgue spaces, proving  in Theorem~\ref{multiplier 2} the analogous of Theorem~\ref{multiplier 1} in this context. The Appendix at the end of this article has a technical estimate involving Hardy spaces in the context of mixed norms, which appears to be new.

\section{Function spaces}

Let $\S(\R^n)$ denote the Schwartz class of smooth, rapidly decreasing functions, with the its standard topology, and $\S'(\R^n)$ be the topological dual of
$\S(\R^n)$.  For a function $f\in\S(\R^n)$, we take for definition of  the Fourier transform the expression given by
\begin{align*}
\widehat f(\xi)=\int_{\R^n}f(x)e^{-ix\cdot\xi}dx,
\end{align*}
and, as usual, extend this definition to $\S'(\R^n)$ by duality.  Let $\S_0(\R^n)$ be the subspace of all $f\in\S(\R^n)$ such that
\begin{align}
\int_{\R^n}f(x)x^\alpha dx=0\label{vanishingmoment}
\end{align}
for all $\alpha\in\N_0^n$.

We have already defined in the introduction  $\widehat{I_s f}(\xi) = |\xi|^{-s}\widehat f(\xi)$, for $0<s<n$; and $\widehat{D^s f}(\xi) = |\xi|^{s}\widehat f(\xi)$, for any $s>0$. These definitions certainly make sense for any function in $\S(\R^n)$. We can extend them to all $s\in \R$ in the same way,
$\widehat{D^s f}(\xi) = |\xi|^{s}\widehat f(\xi)$,  but restricting  $f$ to $\S_0(\R^n)$ when $s<-n$. Note that since in such a case
 $\widehat f(\xi)$ vanishes to infinite order at the origin and so  $D^s$ now maps $\S_0(\R^n)$ continuously into $\S_0(\R^n)$.  Hence we can also extend the definition of $D^s$ to the dual of $\S_0(\R^n)$, which can be identified as the class of distributions $\S'(\R^n)$ modulo polynomials.

Fix a function $\psi\in\S_0(\R^n)$ whose Fourier transform is supported in $1/2<|\xi|<2$ and $\widehat\psi(\xi)>c_0$ for $3/5<|\xi|<5/3$ and for $k\in\Z$
define the Littlewood-Paley operator $$\Delta_kf=\psi_{2^{-k}}*f,$$ where $\psi_{2^{-k}}(x)=2^{kn}\psi(2^kx)$.   We will  call such function a Littlewood-Paley function.

For $0<p,q<\infty$ and $s\in\R$, we recall that the homogeneous Triebel-Lizorkin space $\dot F_p^{s,q}$ is the collection of all $f\in \S'(\R^n)$ (modulo polynomials) such that
\begin{align*}
\|f\|_{\dot F_p^{s,q}}=\left\|\(\sum_{k\in\Z}(2^{sk}|\Delta_kf(x)|)^q\)^\frac{1}{q} \right\|_{L^p}< \infty.
\end{align*}
When this is taken modulo polynomials, it is a Banach space norm if $1\leq p,q <\infty$ and a Banach quasi-norm if either $p$ or $q$ are less than 1.  A choice of a different function $\psi$ with the same properties stated above produce equivalent (quasi-)norms. Furthermore, we define $\dot W^{s,p}$ for $s\in\R$ and $0< p<\infty$ to be the set of all $f\in\S'(\R^n)$ such that $D^s f\in L^p$ with (quasi-)norm $\|\,D^sf\|_{L^p}$, and note that for $1<p<\infty$ and $s\in\R$, one has $\dot{W}^{s,p} = \dot F_p^{s,2}$ with comparable norms. In particular, $\dot F_p^{0,2} = L^p$ for that range of $p$. On the other hand for $0<p\leq1$,  $\dot F_p^{0,2}$ coincides with the Hardy space $H^p$.
The inhomogeneous versions are given by
$F_p^{s,q} = \dot F_p^{s,q} \cap L^p$ and
$ W^{s,p} =  \dot{W}^{s,p} \cap L^p$.
Similarly the homogeneous Besov spaces defined by the (quasi-)norms
\begin{align*}
\|f\|_{\dot B_p^{s,q}}=\(\sum_{k\in\Z}\(2^{sk}\left\|\Delta_kf(x)\right\|_{L^p}\)^q\)^\frac{1}{q}
\end{align*}
 and their inhomogeneous counterparts are given by $B_p^{s,q}= \dot B_p^{s,q} \cap L^p$.

For the purposes of the article, we will only consider the mixed Lebesgue spaces $L^p_tL^q_x(\R \times \R^{n})$, or simply $L^p_tL^q_x(\R^{n+1})$, or $L^pL^q( \R^{n+1})$, for $0<p,q<\infty$, which for us will be defined by the (quasi-)norms
$$
\|f\|_{L^p_tL^q_x(\R^{n+1})} = \left(\int_ {\R}\left(\int_{\R^n}|f(t,x)|^q\;dx\right)^{p/q}dt\right)^{1/p}.
$$
We could obtain, of course, versions of our results in mixed Lebesgue spaces defined by a different ordering of the variables, but we just consider the above one because of the significance in applications in partial differential equations.

If  $\psi$  has the same properties as before but in $\R^{n+1}$ and  $1<p,q<\infty$,  it also holds (see \cite{TW}) that
\begin{equation}\label{twpq}
\|f\|_{L^p_tL^q_x(\R^{n+1})} \approx \left\|\(\sum_{k\in\Z}|\Delta_kf|^2\)^\frac{1}{2} \right\|_{L^p_tL^q_x(\R^{n+1})}.
\end{equation}
Finally, we will need a mixed-norm version of the Hardy spaces.   For $0<p,q<\infty,$ the mixed Hardy space $H^{p,q}(\rr^{n+1})$ is defined to be the collection
of all $f\in \S'(\rr^{n+1})$ (modulo polynomials) such that
$$
\|f\|_{H^{p,q}(\rr^{n+1})} =   \left\|\(\sum_{k\in\Z}|\Delta_kf|^2\)^\frac{1}{2} \right\|_{L^p_tL^q_x(\R^{n+1})}<\infty.
$$
Clearly, by definition and \eqref{twpq}, $H^{p,q}(\R^{n+1})=L^pL^q(\R^{n+1})$ whenever $1<p,q<\infty$.  When either $p$ or $q$ is less than or equal to one, there appears to be much less known about other properties of these spaces.  We do mention that a different definition was given by  Cleanthous-Georgiadis-Nielsen \cite{CGN} using non-tangential maximal functions. They showed that their mixed Hardy spaces also coincide with mixed Lebesgue spaces when both indices are larger than one. We do not know if such mixed Hardy spaces coincide with the $H^{p,q}(\R^{n+1})$ above for other values of $p$ and $q$, but it is likely.  Also, a wavelet characterization of $H^{p,q}$ as defined above was obtained in  Georgiadis-Johsen-Nielsen \cite{GJN}. In any case, for our purposes, what we need is the following estimate.
If $0<q,p<\infty$ and $f\in H^{p,q}(\R^{n+1})\cap L^2(\R^{n+1})$, then
\begin{align}\label{aim10}
\|f\|_{L^{p}L^q(\R^{n+1}) }\le C_{p,q}\|f\|_{H^{p,q}(\R^{n+1})}.
\end{align}
Although the case $p=q$ is well known, we could not locate in the literature the case $p\neq q$ when either exponent is smaller or equal to one. We find this case to be rather non-trivial and we provide a proof in the Appendix.

\section{Bilinear multipliers on Lebesgue spaces}

Our first result is concerned with the bilinear Fourier multipliers of the form
$$\T_{m_\nu}(f,g)(x)=\int_{\rr^{2n}}m_\nu(\xi,\eta)e^{i(\xi+\eta)x}\w f(\xi)\w g(\eta)d\xi\,d\eta $$
for $0\le \nu<2n,$ and $f,g\in \S(\R^n)$ where $m_\nu$ satisfies the size condition
\begin{equation}\label{size}
|m_\nu(\xi,\eta)|\lesssim (|\xi|+|\eta|)^{-\nu}.
\end{equation}
Note that the size condition \eqref{size} guarantees that the operators are well-defined and the integral is absolutely convergent.
However, the multipliers $\T_{m_\nu}$ are not a priori bounded on Lebesgue spaces without regularity on $m_\nu$.

We will need the following auxiliary functions. Let $\mathcal{M}_\nu(\R^n)$ be the collection of all sequences  of functions $\{\Phi_\nu^k\}_{k\in \Z}$ satisfying
$\supp\, \Phi_\nu^k\subset \{|(\xi,\eta)|\approx 2^k\}$  and
\begin{equation}\label{3.2}
|\partial^\beta_\xi\partial^{\gamma}_\eta \Phi_\nu^k(\xi,\eta)| \le C_{\beta,\gamma} (|\xi|+|\eta|)^{\nu-|\beta|-|\gamma|},
\end{equation}
for all $(\xi,\eta)\not=0$ and all multi-indices $\beta,\gamma\in \nn_0^n$,
where $C_{\alpha,\beta}$ is a constant independent of $k$.  A typical example is
$\{\Phi_\nu^k\}:=\{(|\xi|^2+|\eta|^2)^{\nu/2} \phi(2^{-k}\xi,2^{-k}\eta)\}$,
where $\phi$ is a Schwartz function supported in $\{|(\xi,\eta)|\approx 1\}$.

The following result provides a sufficient condition for $\T_{m_\nu}$ to be smoothing. In the case $\nu=0$ and $m_0=1$ it is just the Leibniz rule \eqref{classical} with the same range of exponents in \cite{GO} and \cite{MS}.

\begin{theorem}\label{main1}
Let $m_\nu$ be a  multipliers satisfying \eqref{size} for some
$0\le\nu<2n$, and let $1<p_1,p_2< \infty$ and $p$ be such that $1/p_1+1/p_2=1/p$.
Suppose that
\begin{enumerate}
  \item[(i)] for any $\Phi_\nu \in C^\infty(\R^{2n}\setminus \{0\})$ satisfying \eqref{3.2},
  $\T_{m_\nu\Phi_\nu}$ is bounded from $L^{p_1}\times L^{p_2}$ to $L^p$ with norm $C_1$, and
  \item[(ii)] for any $\{\Phi_\nu^k\}\in {\M}_\nu(\R^{n})$, $\{f_k\}\in L^{p_1}(\ell^2)$, and  $\{g_k\}\in L^{p_2}(\ell^2)$
$$
\|\{\T_{m_\nu\Phi_\nu^k}(f_k,g_k)\}_{k\in \Z}\|_{L^p(\ell^1)} \le C_2\|\{f_k\}_{k\in\Z}\|_{L^{p_1}(\ell^2)}\|\{g_k\}_{k\in\Z}\|_{L^{p_2}(\ell^2)}.
$$
\end{enumerate}
Then for $s\in 2\nn$ or $s>\max(0,\frac{n}{p}-n)$, and $f,g\in \S(\rr^{n})$,
\begin{equation}\label{new3.5}
    \|\T_{m_\nu}(f,g)\|_{\dot{W}^{s,p}} \le C' (\|f\|_{\dot{W}^{s-\nu,p_1}}\|g\|_{L^{p_2} }+\|f\|_{L^{p_1}}\|g\|_{\dot{W}^{s-\nu,p_2}}).
 \end{equation}

Moreover, if $C_1,C_2\lesssim A_{m_\nu}$ for some quantity $A_{m_\nu}$ depending on $m_\nu$, then $C'\lesssim A_{m_\nu}$.
\end{theorem}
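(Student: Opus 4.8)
The plan is to adapt the paraproduct argument of Grafakos--Oh \cite{GO} for the Leibniz rule to operators of order $-\nu$, feeding hypotheses (i) and (ii) in at the two places where genuine boundedness is required. Fix Littlewood--Paley operators $\{\Delta_j\}$ realizing a Calder\'on reproducing formula $f=\sum_j\Delta_j f$ (modulo polynomials), together with $S_j=\sum_{k<j}\Delta_k$ and the fattened blocks $\widetilde\Delta_j=\sum_{|k-j|\le N}\Delta_k$ for a suitable fixed $N$. Splitting the double sum defining $\T_{m_\nu}(f,g)$ according to whether the $f$-frequency dominates, the $g$-frequency dominates, or they are comparable, write $\T_{m_\nu}(f,g)=\Pi_1(f,g)+\Pi_2(f,g)+\Pi_3(f,g)$ with $\Pi_1=\sum_j\T_{m_\nu}(\Delta_j f,\,S_{j-N}g)$, $\Pi_2$ its $f\leftrightarrow g$ mirror, and $\Pi_3=\sum_j\T_{m_\nu}(\Delta_j f,\,\widetilde\Delta_j g)$. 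Since $\dot W^{s,p}$ carries a (quasi-)norm it suffices to bound $\|D^s\Pi_i(f,g)\|_{L^p}$ for $i=1,2,3$, and the $\xi\leftrightarrow\eta$, $p_1\leftrightarrow p_2$ symmetry makes $\Pi_2$ identical to $\Pi_1$. We may assume the right-hand side of \eqref{new3.5} is finite (the estimate being vacuous otherwise), and recall that for $1<p<\infty$ one has $\dot W^{s,p}=\dot F^{s,2}_p$, while for $0<p\le1$ it is enough to control the square function of $D^s\T_{m_\nu}(f,g)$, since for Schwartz $f,g$ this distribution is then an $L^p$ function dominated in $L^p$ by its $H^p$ norm.

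The term $\Pi_1$ is exactly where (i) enters. One has $\Pi_1(f,g)=\T_{m_\nu\Theta}(f,g)$, where $\Theta$ is the smooth order-zero paraproduct symbol, supported in a region $\{|\eta|\ll|\xi|\}$ on which $\xi$ and $\xi+\eta$ stay away from the origin and $|\xi+\eta|\approx|\xi|$. Hence $D^s\Pi_1(f,g)=\T_{m_\nu\Phi_\nu}(D^{s-\nu}f,g)$ with $\Phi_\nu(\xi,\eta):=|\xi+\eta|^s|\xi|^{\nu-s}\Theta(\xi,\eta)$, which is smooth on $\R^{2n}\setminus\{0\}$ and, by the support properties of $\Theta$, satisfies \eqref{3.2}. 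Thus hypothesis (i) gives $\|D^s\Pi_1(f,g)\|_{L^p}\le C_1\|D^{s-\nu}f\|_{L^{p_1}}\|g\|_{L^{p_2}}$ directly, with no square function, for every admissible $s$ (in particular $s\in2\nn$) and every $0<p<\infty$.

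For $\Pi_3$ the output frequency of the $j$-th summand fills the ball $\{|\zeta|\lesssim 2^{j}\}$, so $D^s$ cannot be pulled out as $2^{js}$. If $s\in 2\nn$ this is still elementary: $D^s=(-\Delta)^{s/2}$ is a differential operator, so expanding $|\xi+\eta|^s=\sum_{|\alpha|+|\beta|=s}c_{\alpha\beta}\xi^\alpha\eta^\beta$ and distributing the derivatives reduces matters to finitely many sums $\sum_j\T_{m_\nu}(\partial^\alpha\Delta_j f,\,\partial^\beta\widetilde\Delta_j g)$; absorbing the derivatives together with a factor $2^{j(s-\nu)}$ into modified Littlewood--Paley operators acting on $D^{s-\nu}f$ and on $g$, and the remaining factor $2^{j\nu}\approx(|\xi|+|\eta|)^\nu$ into a sequence $\{\Phi^j_\nu\}\in\mathcal M_\nu$, turns each sum into $\sum_j\T_{m_\nu\Phi^j_\nu}(\cdot,\cdot)$, which is at most $\|\sum_j|\T_{m_\nu\Phi^j_\nu}(\cdot,\cdot)|\|_{L^p}$ and is estimated by hypothesis (ii) together with the one-sided Littlewood--Paley bounds on $L^{p_1},L^{p_2}$ (valid since $p_1,p_2>1$); no restriction on $s$ is needed here. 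For general $s>\max(0,\tfrac{n}{p}-n)$ one instead decomposes the output frequency: $D^s\Pi_3=\sum_\ell 2^{\ell s}\Delta^{[s]}_\ell\sum_{j\ge\ell-N}\T_{m_\nu}(\Delta_j f,\widetilde\Delta_j g)$ with $\Delta^{[s]}_\ell$ a modified block, and sets $\mu=j-\ell$. For fixed $\mu$ the same rearrangement as in the even case gives $2^{\ell s}\T_{m_\nu}(\Delta_{\ell+\mu}f,\widetilde\Delta_{\ell+\mu}g)=2^{-\mu s}F^\mu_\ell$ with $F^\mu_\ell=\T_{m_\nu\Phi^{\ell+\mu}_\nu}(\widetilde\Delta_{\ell+\mu}D^{s-\nu}f,\,\widetilde\Delta_{\ell+\mu}g)$ for some $\{\Phi^k_\nu\}\in\mathcal M_\nu$; since $\Delta^{[s]}_\ell F^\mu_\ell$ is frequency-localized at $|\zeta|\approx2^\ell$, $\|\sum_\ell 2^{-\mu s}\Delta^{[s]}_\ell F^\mu_\ell\|_{L^p}\lesssim 2^{-\mu s}\|(\sum_\ell|\Delta^{[s]}_\ell F^\mu_\ell|^2)^{1/2}\|_{L^p}$, and the translated (shifted) square function estimate from \cite{GO}, whose loss is at most $(1+|\mu|)2^{\mu\max(0,n/p-n)}$, dominates this by $2^{-\mu s}(1+|\mu|)2^{\mu\max(0,n/p-n)}\|(\sum_\ell|F^\mu_\ell|^2)^{1/2}\|_{L^p}$. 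Finally $\ell^2\subset\ell^1$, the reindexing $k=\ell+\mu$, hypothesis (ii), and the one-sided Littlewood--Paley bounds yield $2^{-\mu s}(1+|\mu|)2^{\mu\max(0,n/p-n)}C_2\|D^{s-\nu}f\|_{L^{p_1}}\|g\|_{L^{p_2}}$, and summing over $\mu\ge-N$ (with the $p$-subadditivity of $\|\cdot\|^p_{L^p}$ when $p\le1$) converges exactly because $s>\max(0,\tfrac{n}{p}-n)$.

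Only the constants $C_1$ (from $\Pi_1,\Pi_2$) and $C_2$ (from $\Pi_3$), together with fixed dimensional factors, occur, so $C'\lesssim\max(C_1,C_2)$, which is the final assertion. I expect the main difficulty to lie in the last part of the $\Pi_3$ analysis: obtaining the translated square function estimate with a loss no worse than $(1+|\mu|)2^{\mu\max(0,n/p-n)}$ --- this is precisely what forces the lower bound on $s$ and is the technical heart --- and, in tandem, the bookkeeping needed to keep all the auxiliary multipliers uniformly in $\mathcal M_\nu$ so that hypothesis (ii) applies with a $\mu$-independent constant.
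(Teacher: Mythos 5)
Your overall architecture coincides with the paper's: the same three-term paraproduct decomposition, hypothesis (i) disposing of the two off-diagonal pieces $\Pi_1,\Pi_2$ (after transferring $|\xi+\eta|^s|\xi|^{\nu-s}$ into a symbol satisfying \eqref{3.2}), and the observation that the diagonal piece is equally elementary when $s\in2\nn$. The divergence, and the gap, is in your treatment of $\Pi_3$ for general $s>\max(0,\tfrac np-n)$.

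The estimate you rely on there, $\bigl\|\bigl(\sum_\ell|\Delta_\ell^{[s]}F_\ell^\mu|^2\bigr)^{1/2}\bigr\|_{L^p}\lesssim(1+|\mu|)2^{\mu\max(0,n/p-n)}\bigl\|\bigl(\sum_\ell|F_\ell^\mu|^2\bigr)^{1/2}\bigr\|_{L^p}$, is not the translated square function estimate of \cite{GO}: Lemma \ref{lem:4.2} concerns \emph{spatially} translated Littlewood--Paley functions $\psi(\cdot+\mathbf m)$, carries a logarithmic loss $\ln(1+|\mathbf m|)$, and is proved for $1<p<\infty$ via a H\"ormander-condition argument. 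What you need is a \emph{frequency-shift} estimate, where $F_\ell^\mu$ has Fourier support in a ball of radius $\approx 2^{\ell+\mu}$ and is hit by a block at the much smaller scale $2^\ell$. For $1<p<\infty$ Fefferman--Stein gives this with no loss and your argument closes for $s>0$; but the theorem allows $1/2<p\le1$, and there the routine Peetre-maximal-function bound $|\Delta_\ell^{[s]}F_\ell^\mu|\lesssim 2^{\mu n/r}(M|F_\ell^\mu|^r)^{1/r}$ with $r<p$ only yields a loss of order $2^{\mu n/p}$, which forces $s>n/p$ rather than $s>n/p-n$. Getting the sharp loss $2^{\mu(n/p-n)}$ is precisely the hard point (a ball-support square-function lemma in the quasi-Banach range), and you neither prove it nor cite a source that contains it — you yourself flag it as the technical heart. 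The paper sidesteps the issue entirely with the Grafakos--Oh device: after rescaling, the output cutoff $|\zeta|^s\w\phi(2^{-4}\zeta)$ is expanded in a Fourier series with coefficients $C_{\mathbf m}^s=O((1+|\mathbf m|)^{-s-n})$, converting $T_3$ into a sum over $\mathbf m\in\zz^n$ of terms $\T_{m\Phi_\nu^{(3),k}}(\Delta_k^{\mathbf m}f,\widetilde\Delta_k^{\mathbf m}g)$ to which hypothesis (ii) and the log-loss Lemma \ref{lem:4.2} apply directly; the threshold on $s$ then enters only through the convergence of $\sum_{\mathbf m}|C_{\mathbf m}^s|^{p_*}[\ln(1+|\mathbf m|)]^{2p_*}$, i.e.\ $p_*(n+s)>n$. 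To make your route complete you would have to supply a full proof of the frequency-shift lemma with the stated loss, which is a substantive result in its own right.
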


The proof of Theorem \ref{main1}  needs the following version of the Littlewood-Paley estimate that we take from \cite{GO}.

\begin{lemma}[\cite{GO}]\label{lem:4.2}
Let $\mathbf{m}\in \zz^n\backslash \{0\}$ and $\psi^{\mathbf{m}}(x)=\psi(x+\mathbf{m})$
for some Schwartz function $\psi$ whose Fourier transform is supported in the annulus $1/2\le |\xi|\le 2.$
Let $\Delta_j^{\mathbf{m}}(f)=\Psi_{2^{-j}}^{\mathbf{m}}*f$. Then for $1<p<\infty$ there is a constant $C=C(n,p)$ such that
\begin{equation}\label{LPinL}
\left\|\left(\sum_{j\in \zz}|\Delta_j^{\mathbf{m}}(f)|^2\right)^{1/2}\right\|_{L^p(\rr^n)}\le C \ln(1+| \mathbf m|) \|f\|_{L^p(\rr^n)}.
\end{equation}
\end{lemma}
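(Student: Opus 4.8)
The plan is to reduce the claim to the classical (un-translated) Littlewood--Paley inequality via a vector-valued Mikhlin--Hörmander multiplier theorem, tracking the dependence of the multiplier norm on $\mathbf m$ and extracting the logarithmic gain from an interpolation between an $L^2$ estimate and a crude $L^\infty_{\text{multiplier}}$ bound. First I would note that $\widehat{\Delta_j^{\mathbf m} f}(\xi) = \widehat{\psi}(2^{-j}\xi)\, e^{i 2^{-j}\mathbf m\cdot\xi}\,\widehat f(\xi)$, so the square function operator $f \mapsto (\sum_j |\Delta_j^{\mathbf m} f|^2)^{1/2}$ is given by the $\ell^2$-valued Fourier multiplier $\vec m(\xi) = \{\widehat\psi(2^{-j}\xi)\,e^{i 2^{-j}\mathbf m\cdot\xi}\}_{j\in\zz}$. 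Because $\widehat\psi$ is supported in the annulus $\{1/2 \le |\xi| \le 2\}$, at each fixed $\xi$ only boundedly many $j$ contribute, so the $\ell^2$ norm $|\vec m(\xi)|_{\ell^2}$ is bounded uniformly; the issue is the $\ell^2$-valued Hörmander condition on derivatives of $\vec m$, where each factor $e^{i2^{-j}\mathbf m\cdot\xi}$ differentiated in $\xi$ produces a factor of size $\lesssim 2^{-j}|\mathbf m| \approx |\mathbf m|/|\xi|$ on the annulus where the $j$-th term lives. Thus $\||\xi|^{|\alpha|}\partial^\alpha \vec m(\xi)|_{\ell^2} \lesssim (1+|\mathbf m|)^{|\alpha|}$ for $|\alpha| \le n+1$, which by the (vector-valued) Mikhlin--Hörmander theorem gives $L^p \to L^p(\ell^2)$ boundedness with norm $\lesssim (1+|\mathbf m|)^{n+1}$ — polynomial, not logarithmic.

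To upgrade the polynomial bound to the logarithmic one I would interpolate. On $L^2$, Plancherel gives the square-function bound with constant $O(1)$ independent of $\mathbf m$ (since $|\vec m(\xi)|_{\ell^2} \lesssim 1$ pointwise). So one has: norm $O(1)$ on $L^2$, and norm $\lesssim (1+|\mathbf m|)^{C}$ on every $L^p$, $1 < p < \infty$ (with $C = C(n)$). For fixed $p \in (1,\infty)$, pick $p_0$ strictly between $1$ and $\min(p,2)$ (or between $\max(p,2)$ and $\infty$); by the vector-valued Marcinkiewicz/Riesz--Thorin interpolation between the $L^2$ bound and the $L^{p_0}$ bound, the norm on $L^p$ is $\lesssim (1+|\mathbf m|)^{C\theta}$ where $\theta = \theta(p,p_0) \to 0$ as $p_0 \to p$. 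This alone yields norm $\lesssim (1+|\mathbf m|)^{\varepsilon}$ for every $\varepsilon > 0$, which is weaker than the stated $\ln(1+|\mathbf m|)$. To reach the sharp logarithm one applies this with $\varepsilon$ depending on $\mathbf m$: roughly, choose the interpolation parameter so that $(1+|\mathbf m|)^{\theta} \approx e$, i.e. $\theta \approx 1/\ln(1+|\mathbf m|)$; tracking how the implied constant in the interpolation inequality degrades as $p_0 \to p$ (it blows up at worst polynomially in $1/|p_0 - p| \sim 1/\theta \sim \ln(1+|\mathbf m|)$, or via the Stein interpolation constants which one can keep under control) produces a final bound $\lesssim \ln(1+|\mathbf m|)$.

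The main obstacle is precisely the last step: making the $\varepsilon$-dependent interpolation quantitative so that the loss from pushing $p_0$ toward $p$ is only logarithmic and does not swamp the $(1+|\mathbf m|)^\theta$ gain. I expect the cleanest route is Stein's complex interpolation with an analytic family of operators, where the norm bound on the critical line controls the constant and one can compute explicitly how it depends on the distance of the interpolation parameter to the endpoint; alternatively, one can use the known precise form of the Mikhlin constant and a Littlewood--Paley/Calderón--Zygmund decomposition argument directly, summing $O(\ln(1+|\mathbf m|))$ dyadic pieces each contributing $O(1)$. Since the lemma is quoted verbatim from \cite{GO}, I would in practice cite their argument; but the self-contained proof would follow the interpolation scheme just described, with the remaining verifications (support and derivative bounds on $\vec m$, applicability of the vector-valued multiplier theorem) being routine.
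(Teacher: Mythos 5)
Your reduction to an $\ell^2$-valued operator and your observation that the $L^2$ bound is uniform in $\mathbf m$ (Plancherel plus bounded overlap of the annuli) are correct and match the starting point of the argument in \cite{GO} that the paper sketches. The problem is the second half: the interpolation scheme you propose to convert the polynomial bound $(1+|\mathbf m|)^{C}$ on $L^{p_0}$ into a logarithmic bound on $L^p$ cannot work, because the interpolation parameter goes the wrong way. If $1/p=(1-\theta)/2+\theta/p_0$, then $\theta\to 1$ (not $0$) as $p_0\to p$, and as $p_0\to 1$ (or $\infty$) one has $\theta\to|2/p-1|$, a fixed positive number for $p\neq2$. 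Hence no choice of auxiliary exponent makes $\theta$ small: interpolating a uniform $L^2$ bound against a polynomially growing endpoint bound can never yield anything better than $(1+|\mathbf m|)^{C|2/p-1|}$ at a fixed $p\neq 2$, so the optimization ``choose $\theta\approx 1/\ln(1+|\mathbf m|)$'' is simply not available. The logarithm in \eqref{LPinL} is not an interpolation phenomenon.

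The mechanism that actually produces it --- and the one the paper records explicitly because it is reused for the mixed-norm extension \eqref{LPmixed} via Benedek--Calder\'on--Panzone --- is the direct verification that the $\ell^2$-valued kernel $K(x)=\{\Psi^{\mathbf m}_{2^{-j}}(x)\}_{j}$ satisfies the H\"ormander integral condition with constant $C\ln(1+|\mathbf m|)$: in the sum $\sum_{j}\int_{|x|>2|y|}|\Psi^{\mathbf m}_{2^{-j}}(x-y)-\Psi^{\mathbf m}_{2^{-j}}(x)|\,dx$ the scales split into two geometrically summable regimes (handled by the mean value theorem at coarse scales and by the decay of $\psi$ at fine scales) separated by roughly $\ln(1+|\mathbf m|)$ intermediate scales, each of which contributes only $O(1)$ because the translation by $\mathbf m$ destroys the usual gain there. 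Vector-valued Calder\'on--Zygmund theory then transfers the uniform $L^2$ bound to $L^p$ with operator norm controlled by the H\"ormander constant. You do gesture at this (``summing $O(\ln(1+|\mathbf m|))$ dyadic pieces each contributing $O(1)$'') as an alternative in your final sentence, but as written your primary route has a fatal gap at the interpolation step, and the alternative is the proof rather than a fallback.
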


We note for further use that the proof of this result in \cite{GO} is based, as in the classical case, on the vector valued singular integral
$$
Tf(x)= \int_{\R^n} K(x-y)f(y) dy= \left\{ \int_{\R^n} \Psi_{2^{-j}}(x-y)f(y)dy\right\}_j
$$
as an operator from $L^p(\R^n,\C) \to L^p(\R^n,\ell^2)$ and showing that the kernel satisfies the H\"ormander integral condition
$$
 \int_{|x|>2|y|} \|K(x-y) - K(x)\|_{\C\to \ell^2}\, dx \leq
\sum_{j\in\Z} \int_{|x|>2|y|} |\Psi_{2^{-j}}(x-y) - \Psi_{2^{-j}}(x)|\, dx
$$
$$
\leq C \ln(1+ |\mathbf m|).
$$

The same of course holds if we work in $\R^{n+1}$. But then, by the  results in Benedek-Calder\'on-Panzone \cite{BCP} (see also \cite{TW}), the boundedness
$$
T:L^p(\R^{n+1},\C) \to L^p(\R^{n+1},\ell^2)
$$
for all $1<p<\infty$, also gives
$$
T:L^p_tL^q_x(\R^{n+1},\C) \to L^p_tL^q_x(\R^{n+1},\ell^2)
$$
for all $1<p,q<\infty$, and hence the bound
\begin{equation}\label{LPmixed}
\left\|\left(\sum_{j\in \zz}|\Delta_j^{\mathbf{m}}(f)|^2\right)^{1/2}\right\|_ {L^p_tL^q_x(\R^{n+1})}\lesssim \ln(1+ |\mathbf m|) \|f\|_{L^p_tL^q_x(\R^{n+1})}
\end{equation}
for $\mathbf m \in \Z^{n+1}$.

\noindent\textit{Proof of Theorem \ref{main1}.}
We follow very closely the arguments in \cite[Theorem 1]{GO}.  Select a function $\w{\phi}\in \S(\rr^n)$ such that $\supp\,\widetilde{\psi} \subset B(0,2)$,
$\w{\phi}(\xi)=1$ on $|\xi|\le 1$, and let
$\w\psi(\xi)=\w\phi(\xi)-\w\phi(2\xi)$   so that
$$\sum_{j\in\zz}\w \psi(2^{-j}\xi)=1\quad \mbox{for}\ \xi\not=0.$$
Let also $\w{\widetilde{\psi}}(\xi)=\sum_{j:|j|\le2}\w{\psi}(2^{j}\xi)$.

We use a familiar paraproduct decomposition to write $D^sT_\nu(f,g)$ as
\begin{align*}
& D^s\T_{m_\nu}(f,g)(x) \\
& =\sum_{j,k\in \zz} \int_{\rr^{2n}}e^{i(\xi+\eta)x} m_\nu(\xi,\eta) |\xi+\eta|^s \w\psi(2^{-j}\xi)\w
f(\xi)\w\psi(2^{-k}\xi)\w g(\eta)d\xi\,d\eta\\
&=\sum_{j\in \zz} \int_{\rr^{2n}}e^{i(\xi+\eta)x} m_\nu(\xi,\eta)\frac{|\xi+\eta|^s}{|\xi|^{s-\nu}}\w\psi(2^{-j}\xi)\w {D^{s-\nu}f}(\xi)\w{\phi}(2^{-j+3}\xi)\w g(\eta)d\xi\,d\eta\\
&\quad+\sum_{j\in \zz } \int_{\rr^{2n}}e^{i(\xi+\eta)x} m_\nu(\xi,\eta)\frac{|\xi+\eta|^s}{|\eta|^{s-\nu}}\w{\phi}(2^{-j+3}\xi)\w f(\xi)\w\psi(2^{-j}\xi)\w {D^{s-\nu}g}(\eta)d\xi\,d\eta\\
&\quad+\sum_{\substack{j,k\in \Z\\ |j-k|\le 2}} \int_{\rr^{2n}}e^{i(\xi+\eta)x} m_\nu(\xi,\eta)\frac{|\xi+\eta|^s}{|\eta|^{s-\nu}}\w\psi(2^{-j}\xi)\w f(\xi)\w\psi(2^{-k}\xi)\w {D^{s-\nu}g}(\eta)d\xi\,d\eta\\
&= T_1(D^{s-\nu}f,g)(x)+T_2(f,D^{s-\nu}g)(x)+T_3(f,D^{s-\nu}g)(x),
\end{align*}
where $T_i$ for $i=1,2,3$ are defined via the bilinear symbols
\begin{align*}
m_1(\xi,\eta)&=m_\nu(\xi,\eta)\frac{|\xi+\eta|^s}{|\xi|^{s-\nu}}\sum_{j\in\zz}\w\psi(2^{-j}\xi) \w \phi(2^{-j+3}\eta) := m_\nu(\xi,\eta)\Phi_\nu^{(1)}(\xi,\eta) ,\\
m_2(\xi,\eta)&=m_\nu(\xi,\eta)\frac{|\xi+\eta|^s}{|\eta|^{s-\nu}}\sum_{j\in\zz}\w\phi(2^{-j+3}\xi) \w \psi(2^{-j}\eta):= m_\nu(\xi,\eta)\Phi_\nu^{(2)}(\xi,\eta) ,\\
m_3(\xi,\eta)&=m_\nu(\xi,\eta)\frac{|\xi+\eta|^s}{|\eta|^{s-\nu}}\sum_{j\in\zz}\w\psi(2^{-j}\xi) \w {\widetilde{\psi}}(2^{-j}\eta):= m_\nu(\xi,\eta)\Phi_\nu^{(3)}(\xi,\eta) .
\end{align*}
It will be enough then to show that for $i=1,2,3$,
\begin{equation}\label{Ti}
\|T_i(f,g)\|_{L^{p}} \lesssim  \|f\|_{L^{p_1}}\|g\|_{L^{p_2} }.
\end{equation}

For $i=1,2$, the functions  $\Phi_\nu^{(i)}$ satisfy \eqref{3.2} since $|\xi+\eta|\neq 0$ on their support.
 Hence $T_1$ and $T_2$ satisfy \eqref{Ti} by hypothesis. The same is true for $T_3$ if $s$ is an even integer as in such a case $\Phi_\nu^{(3)}$ still satisfies \eqref{3.2} .

Let $\w\varphi \in C_c^\infty(\R^n)$ be a function which has slightly larger compact support than $\w\psi$
and satisfies $\w\psi=\w\psi\w\varphi$. By looking carefully at the support of the functions involved in the following integral and the properties of $\phi$, we may write
\begin{align*}
\begin{split}
& T_3(f,g)(x)\\
&=\sum_{k\in \zz}\int_{\rr^{2n}}  \w\phi (2^{-k-4}(\xi+\eta)) \frac{|\xi+\eta|^s}{|\eta|^{s-\nu}} m_\nu(\xi,\eta)\w{\varphi}(2^{-k}\xi)\w{\widetilde\varphi}(2^{-k}\eta) \w{\psi}(2^{-k}\xi)\w f(\xi)\\
  &\quad \quad \quad \times\w {\widetilde\psi}(2^{-k}\eta) \w g(\eta)
  e^{ i(\xi+\eta)x}d\xi\,d\eta \\
&=\sum_{k\in \zz} 2^{2nk}\int_{\rr^{2n}}  \w\phi_s (2^{-4}(\xi+\eta)) m_\nu(2^k\xi,2^k\eta)\Phi_\nu^{(3),k}(2^k\xi,2^k\eta) \w{\psi}(\xi)\w f(2^k\xi)\\
  &\quad \quad \quad \times \w {\widetilde{\psi}}_{-s}(\eta) \w g(2^k\eta)
  e^{ i2^k(\xi+\eta)x}d\xi\,d\eta,
\end{split}
\end{align*}
where
$\w {\widetilde{\psi}}_{-s}(\eta)= |\eta|^{-s}\w\psi(\eta)$, $\w\phi_s(\xi)=|\xi|^s\w\phi(\xi)$, and
$$\{\Phi_\nu^{(3),k}(\xi,\eta)\}=\{|\eta|^\nu\w{\varphi}(2^{-k}\xi)\w{\widetilde\varphi}(2^{-k}\eta)\}\in \M_\nu.$$

Since $\w\phi_s (2^{-4}(\cdot))$ has compact support, there exists a constant $c_0$ so that it can be expanded in a Fourier series on  a cube centered at the origin of side length $c_0$ and obtain
\begin{equation}\label{eq:43}
\w\phi_s (2^{-4}(\xi+\eta))\w\psi(\xi)\w {\widetilde{\psi}}_{-s}(\eta)=\sum_{\mathbf{m}\in \zz^n} C_{\mathbf{m}}^s e^{\frac{2\pi i}{c_0}(\xi+\eta)\mathbf{m}} \w\psi(\xi)\w {\widetilde{\psi}}_{-s}(\eta),
\end{equation}
where the Fourier coefficients satisfy
\begin{equation*}
  C_{\mathbf{m}}^s=O((1+|\mathbf{m}|)^{-s-n})\quad\mbox{as}\quad |\mathbf{m}|\rightarrow \infty.
\end{equation*}

Therefore
\begin{align*}\label{eq3.9}
& |T_3(f,g)(x)| \\
&\le\sum_{k\in\zz}  2^{2nk} \left| \int_{\rr^{2n}} \sum_{\mathbf{m}\in \zz^n}C_{\mathbf{m}}^s m_\nu(2^k\xi,2^k\eta)\Phi_\nu^{(3),k}(2^k\xi,2^k\eta) e^{\frac{2\pi i}{c_0}(\xi+\eta)\mathbf{m}} \right. \\
&\left. \phantom{\int}\qquad \times\w{\psi}(\xi)\w f(2^k\xi)\w {\widetilde{\psi}}_{-s}(\eta) \w g(2^k\eta)  e^{ i2^k(\xi+\eta)x}d\xi\,d\eta\right|\\
  &\leq \sum_{\mathbf{m}\in \zz^n}|C_{\mathbf{m}}^s| \sum_{k\in\zz}  2^{2nk}\left|\int_{\rr^{2n}} m(2^k\xi,2^k\eta)\Phi_\nu^{(3),k}(2^k\xi,2^k\eta)  e^{\frac{2\pi i}{c_0}(\xi+\eta)\mathbf{m}}\right. \\
&\left.\phantom{\int}\qquad \times\w{\psi}(\xi)\w f(2^k\xi)\w {\widetilde{\psi}}_{-s}(\eta) \w g(2^k\eta)
  e^{ i2^k(\xi+\eta)x}d\xi\,d\eta\right| \\
&=\sum_{\mathbf{m}\in \zz^n}|C_{\mathbf{m}}^s| \sum_{k\in\zz} \left| \int_{\rr^{2n}}  m(\xi,\eta)\Phi_\nu^{(3),k}(\xi,\eta)\widehat{\Delta_k^{\mathbf m}(f)}(\xi)\widehat{\widetilde{\Delta}_k^{\mathbf m}(g)}(\eta)\right.\\
 &\left.\phantom{\int}\qquad \times\ e^{2\pi i(\xi+\eta)x}d\xi\,d\eta\right|\\
&=  \sum_{\mathbf{m}\in \zz^n}|C_{\mathbf{m}}^s| \sum_{k\in\zz} | \T_{m\Phi_\nu^{(3),k}}(\Delta_k^{\mathbf m}(f), \widetilde{\Delta}_k^{\mathbf m}(g))(x) |,
\end{align*}
where $$\widehat{\Delta_k^{\mathbf m}(f)}(\xi)=e^{\frac{2\pi i}{c_0}2^{-k}\xi \mathbf{m}} \w{\psi}(2^{-k}\xi)\w f(\xi),$$
$$
\widehat{\widetilde{\Delta}_k^{\mathbf m}(g)}(\eta)=e^{\frac{2\pi i}{c_0}2^{-k}\eta \mathbf m}\w {\widetilde{\psi}_{-s}}(2^{-k}\eta) \w {g}(\eta).$$

Let $p_*=\min(p,1)$. By the $L^{p_1}(\ell^2)\times L^{p_2}(\ell^2)\rightarrow L^p(\ell^1)$ boundedness of $\T_{m\Phi_\nu^{(3),k}}$  and Lemma \ref{lem:4.2},
 we obtain
\begin{align*}
&  \left\|\sum_{\mathbf{m}\in \zz^n}|C_{\mathbf{m}}^s| \sum_{k\in\zz} | \T_{m\Phi_\nu^{(3),k}}(\Delta_k^{\mathbf m}(f), \widetilde{\Delta}_k^{\mathbf m}(g))(x) |\right\|_{L^p}^{p_*}\\
&  \lesssim  \sum_{\mathbf{m}\in \zz^n}|C_{\mathbf{m}}^s|^{p_*}  \left\| \(\sum_{k\in\Z}|\Delta_k^{\mathbf m}(f)|^2\)^{1/2}\right\|_{L^{p_1}}^{p_*}
  \left\|\(\sum_{k\in\Z}|\widetilde{\Delta}_k^{\mathbf m}(g)|^2\)^{1/2}\right\|_{L^{p_2}}^{p_*}\\
&  \lesssim  \sum_{\mathbf{m}\in \zz^n}|C_{\mathbf{m}}^s|^{p_*} [\ln(1+|\mathbf{m}|)]^{2p_*} \left\| f\right\|_{L^{p_1}}^{p_*}
  \left\| g \right\|_{L^{p_2}}^{p_*}\\
&  \lesssim  \left\| f\right\|_{L^{p_1}}^{p_*}
  \left\| g \right\|_{L^{p_2}}^{p_*},
\end{align*}
since the series $\sum_{\mathbf{m}\in \zz^n}|C_{\mathbf{m}}^s|^{p_*} [\ln(1+ \mathbf m)]^{2p_*}$ converges under our assumption $p_*(n+s)>n$.
This completes the proof of Theorem \ref{main1}.
$\hfill\Box$

As applications, we shall prove the smoothing property of bilinear fractional Fourier multipliers with limited regularity (including the Coifman-Meyer bilinear fractional multipliers with that type of regularity).
Let $\Psi\in \mathcal{S}(\mathbb{R}^{2n})$ be such that
\begin{equation}\label{approx1}
  \mbox{supp} \ \Psi\subset \{\xi\in \mathbb{R}^{2n}: 1/2\le |(\xi,\eta)|\le 2\},
\quad \sum_{k\in \mathbb{Z}} \Psi(\xi/2^k,\eta/2^k)=1
\end{equation}
for all $(\xi,\eta) \in \mathbb{R}^{2n} \setminus \{0\}$.

For a function $m$,
 $\nu\ge 0$, and $k\in \mathbb{Z}$, define
\begin{equation*}
  m_k^\nu(\xi,\eta)=2^{k\nu}m(2^k\xi,2^k\eta)\Psi(\xi,\eta).
\end{equation*}
We will consider bilinear Fourier multipliers $T_\nu$ with symbols $m$ satisfying the Sobolev regularity studied in \cite{GMT}:
\begin{equation}\label{smooth}
\sup_{k\in \zz}\|m_k^\nu\|_{W^{(r,r),2}(\rr^{2n})}<\infty.
\end{equation}
Here $W^{(r_1,r_2),2}(\rr^{2n})$ denotes the product Sobolev space consisting of all functions $h$  in $L^2(\rr^{2n})$ satisfying
 $$\|h\|_{W^{(r_1,r_2),2}}:=\left(\int_{\rr^{n}}\int_{\rr^{n}}(1+|x|^2)^{r_1}(1+|y|^2)^{r_2}|\widehat{h}(x,y)|^2dxdy\right)^{1/2}<\infty.$$

We will apply Theorem \ref{main1} to show that the bilinear Fourier multipliers $T_m$ satisfying \eqref{smooth} satisfy Leibniz-type rules and in particular are smoothing when $\nu>0$.

\begin{theorem}\label{multiplier 1}
Suppose that $m$ satisfies \eqref{smooth} for some $0\le\nu<2n$ and $n/2<r\le n$. Let
$n/r<p_i<\infty$, $i=1,2,$ and $1/p=1/p_1+1/p_2$.
Then for $s\in 2\nn$ or $s>\max(0,\frac{n}{p}-n)$, the bilinear multiplier with symbol $m$ satisfies
\begin{align*}  \|\T_{m}(f,g)\|_{\dot{W}^{s,p}}
\lesssim \sup_{j\in \zz}\|m_j^\nu\|_{W^{(r,r),2}} \left(\|f\|_{\dot{W}^{s-\nu,p_1}}\|g\|_{L^{p_2} }+\|f\|_{L^{p_1}}\|g\|_{\dot{W}^{s-\nu,p_2}}\right)
\end{align*}
for all $f,g$ in $\mathcal{S}(\rr^n)$.
\end{theorem}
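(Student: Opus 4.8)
\noindent\emph{Proof proposal.}
The plan is to obtain Theorem~\ref{multiplier 1} as a corollary of Theorem~\ref{main1}, by verifying that a symbol $m$ satisfying \eqref{smooth} meets hypotheses (i) and (ii) of that theorem with $A_{m_\nu}=\sup_{j\in\zz}\|m_j^\nu\|_{W^{(r,r),2}}$; the final estimate then follows from the ``$C'\lesssim A_{m_\nu}$'' conclusion of Theorem~\ref{main1}. The underlying heuristic is that $m$ behaves like a bilinear multiplier of order $-\nu$, so multiplying it by a factor of order $\nu$ produces an operator of order $0$ whose dyadic rescalings lie uniformly in the product Sobolev class $W^{(r,r),2}$, with constant controlled by $A_{m_\nu}$. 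Note also that $n/2<r\le n$ forces $1\le n/r<2$, so the range $n/r<p_1,p_2<\infty$ of Theorem~\ref{multiplier 1} sits inside the range $1<p_1,p_2<\infty$ required in Theorem~\ref{main1}; there is no range obstruction.

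For hypothesis (i), fix $\Phi_\nu\in C^\infty(\rr^{2n}\setminus\{0\})$ satisfying \eqref{3.2} and choose $\Psi_1\in C_c^\infty(\rr^{2n})$ supported in a fixed annulus with $\Psi_1\equiv 1$ on $\supp\Psi$. Then for each $k\in\zz$,
$$(m\Phi_\nu)(2^k\xi,2^k\eta)\Psi(\xi,\eta)=\big[2^{k\nu}m(2^k\xi,2^k\eta)\Psi(\xi,\eta)\big]\,\big[2^{-k\nu}\Phi_\nu(2^k\xi,2^k\eta)\Psi_1(\xi,\eta)\big]=m_k^\nu(\xi,\eta)\,b_k(\xi,\eta).$$
By \eqref{3.2} and scaling, the factors $b_k$ are supported in one fixed compact set away from the origin and satisfy $\sup_k\|b_k\|_{C^N}<\infty$ for every $N$; since multiplication by such a bump is bounded on $W^{(r,r),2}$ with operator norm controlled by finitely many of its $C^N$-seminorms, we get $\sup_k\|(m\Phi_\nu)(2^k\cdot)\Psi\|_{W^{(r,r),2}}\lesssim A_{m_\nu}$. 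Thus $m\Phi_\nu$ is a bilinear Fourier multiplier satisfying \eqref{smooth} with $\nu=0$ and constant $\lesssim A_{m_\nu}$, and I would invoke the bilinear multiplier theorem for this minimal-Sobolev class---\cite{GMT} (together with \cite{GS} in the quasi-Banach range) when $\nu=0$, and \cite{CTW} when $\nu>0$---which applies for $n/r<p_1,p_2<\infty$, to conclude that $\T_{m\Phi_\nu}\colon L^{p_1}\times L^{p_2}\to L^p$ with norm $C_1\lesssim A_{m_\nu}$.

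For hypothesis (ii), given $\{\Phi_\nu^k\}\in\mathcal{M}_\nu(\rr^n)$, the same rescaling identity gives $(m\Phi_\nu^k)(2^k\cdot)\Psi'=m_k^\nu\,\tilde b_k$, where $\Psi'$ is a fixed bump equal to $1$ on the (common, after a harmless rescaling of $\Psi$) annular support of the $\Phi_\nu^k(2^k\cdot)$, and $\sup_k\|\tilde b_k\|_{C^N}<\infty$ for all $N$. Hence the $\{m\Phi_\nu^k\}$ form a family of symbols, each frequency-localized to $\{|(\xi,\eta)|\approx 2^k\}$, that is uniformly in $k$ of ``order zero'' in the sense of \eqref{smooth} with constant $\lesssim A_{m_\nu}$. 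The sought bound
$$\|\{\T_{m\Phi_\nu^k}(f_k,g_k)\}_k\|_{L^p(\ell^1)}\le C_2\,\|\{f_k\}\|_{L^{p_1}(\ell^2)}\|\{g_k\}\|_{L^{p_2}(\ell^2)}$$
is exactly of the type produced by the machinery behind those multiplier theorems: since each summand is frequency-localized to a single dyadic shell, one combines the one-shell $L^{p_1}\times L^{p_2}\to L^p$ estimates (with constant $\lesssim A_{m_\nu}$) with Cauchy--Schwarz and the near-orthogonality of the outputs of distinct shells to trade the $\ell^2$ norms on the inputs for the $\ell^1$ norm on the output; this is carried out in \cite{GMT},\cite{GS} for $\nu=0$ and in \cite{CTW} for $\nu>0$, yielding $C_2\lesssim A_{m_\nu}$.

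With (i) and (ii) in hand and $C_1,C_2\lesssim A_{m_\nu}$, Theorem~\ref{main1} immediately gives, for $s\in 2\nn$ or $s>\max(0,\frac{n}{p}-n)$,
$$\|\T_m(f,g)\|_{\dot W^{s,p}}\lesssim A_{m_\nu}\big(\|f\|_{\dot W^{s-\nu,p_1}}\|g\|_{L^{p_2}}+\|f\|_{L^{p_1}}\|g\|_{\dot W^{s-\nu,p_2}}\big),$$
which is the assertion. I expect the main obstacle to be hypothesis (ii): the reduction of $m\Phi_\nu$ and the $m\Phi_\nu^k$ to order-zero symbols with uniform product-Sobolev control, and the boundedness of bump-multiplication on $W^{(r,r),2}$, are routine, but securing the coupled vector-valued $L^{p_1}(\ell^2)\times L^{p_2}(\ell^2)\to L^p(\ell^1)$ estimate with the correct dependence on $A_{m_\nu}$ rests essentially on the proofs of the limited-regularity bilinear multiplier theorems of \cite{GMT} and \cite{CTW}, and some bookkeeping is needed to organize the dyadic shells and the rescaled bumps $\tilde b_k$ so that those results apply uniformly in $k$.
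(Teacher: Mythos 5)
Your overall strategy coincides with the paper's: reduce Theorem~\ref{multiplier 1} to Theorem~\ref{main1} by verifying (i) and (ii) with $A_{m_\nu}=\sup_j\|m_j^\nu\|_{W^{(r,r),2}}$, using the rescaling identity $(m\Phi_\nu)(2^k\cdot)\Psi=m_k^\nu\cdot b_k$ to turn $m\Phi_\nu$ into an order-zero symbol with uniform product-Sobolev control. Your treatment of (i) is essentially the paper's (the paper phrases the bump-multiplication step as the multiplication-algebra property of $W^{(r,r),2}$, valid since $r>n/2$, and then invokes Miyachi--Tomita; note that after the reduction the symbol is of order zero, so the order-zero theorem suffices even when $\nu>0$).

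The gap is in hypothesis (ii), exactly where you flag the main obstacle. The mechanism you propose --- ``near-orthogonality of the outputs of distinct shells'' combined with one-shell bounds --- cannot produce the required estimate: the target norm is $L^p(\ell^1)$, i.e.\ $\bigl\|\sum_k|\T_{m\Phi_\nu^k}(f_k,g_k)|\bigr\|_{L^p}$ with absolute values inside the sum, and orthogonality between frequency shells gives nothing once absolute values are taken. Moreover the inputs $f_k,g_k$ are arbitrary members of $L^{p_i}(\ell^2)$, not Littlewood--Paley pieces of a single function, so there is no cancellation to exploit. The ingredient that actually closes this step is the pointwise domination of Lemma~\ref{lem: maximal} (Fujita--Tomita, Grafakos--Miyachi--Tomita): since $m\Phi_\nu^k$ is supported in a single dyadic shell and its rescaling lies uniformly in $W^{(r,r),2}$ with norm $\lesssim A_{m_\nu}$, one has
\begin{equation*}
|\T_{m\Phi_\nu^k}(f_k,g_k)(x)|\lesssim A_{m_\nu}\,\bigl(M(|f_k|^l)(x)\bigr)^{1/l}\bigl(M(|g_k|^l)(x)\bigr)^{1/l}
\end{equation*}
for some $l$ with $n/r<l<\min\{2,p_1,p_2\}$ (such an $l$ exists precisely because $p_1,p_2>n/r$). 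Summing in $k$, applying the Cauchy--Schwarz inequality pointwise in $k$, then H\"older's inequality and the Fefferman--Stein vector-valued maximal inequality (legitimate since $2/l>1$ and $p_i/l>1$) yields the $L^{p_1}(\ell^2)\times L^{p_2}(\ell^2)\to L^p(\ell^1)$ bound with constant $\lesssim A_{m_\nu}$. Without this maximal-function step (or an explicit substitute), your verification of (ii) does not go through as written.
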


We remark again that for $\nu=0$ this is essentially the Leibniz result  in \cite{MS} and \cite{GO} (and the work of other authors for a smaller range of exponents) which corresponds to  the multiplier $m=1$, except that we allow also for multipliers with minimal smoothness.

\begin{corollary}[\cite{MS, GO}]
Let $1<p_1,p_2<\infty$ and $1/p=1/p_1+1/p_2$. Then
$$
  \|f\cdot g\|_{\dot{W}^{s,p}} \lesssim \|f\|_{\dot{W}^{s,p_1}}\|g\|_{L^{p_2} }+\|f\|_{L^{p_1}}\|g\|_{\dot{W}^{s,p_2}}
$$
for  $s\in 2\nn$ or $s>\max(0,\frac{n}{p}-n)$ and all $f,g\in \mathcal{S}(\rr^n)$.
\end{corollary}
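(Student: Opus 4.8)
The corollary is precisely the special case of Theorem~\ref{multiplier 1} in which $m$ is the constant symbol $1$, with $\nu=0$ and $r=n$; the plan is simply to check that this specialization is legitimate and that it reproduces the ranges of $p_1,p_2,s$ claimed.

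Two elementary verifications are needed. First, the constant symbol $m\equiv 1$ satisfies the regularity condition \eqref{smooth} with $\nu=0$: by definition $m_k^{0}(\xi,\eta)=m(2^k\xi,2^k\eta)\,\Psi(\xi,\eta)=\Psi(\xi,\eta)$ for every $k\in\zz$ (the factor $2^{k\nu}$ equals $1$ when $\nu=0$, and $m\equiv1$), so $\sup_{k\in\zz}\|m_k^{0}\|_{W^{(r,r),2}(\rr^{2n})}=\|\Psi\|_{W^{(r,r),2}(\rr^{2n})}<\infty$ for every $r>0$, in particular for $r=n$, since $\Psi\in\mathcal S(\rr^{2n})$. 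Second, the bilinear Fourier multiplier attached to the constant symbol $1$ is, up to a harmless fixed constant coming from the Fourier normalization, pointwise multiplication, $\T_{1}(f,g)=f\cdot g$ for $f,g\in\mathcal S(\rr^n)$, since with $m\equiv1$ the defining integral factors into the product of the two Fourier inversion integrals for $f$ and $g$.

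Now apply Theorem~\ref{multiplier 1} with this $m$, with $\nu=0$, and with $r=n$ (admissible, since $n/2<n$). Its hypothesis on the Lebesgue exponents is $n/r<p_i<\infty$, i.e.\ $1<p_i<\infty$, $i=1,2$, with $1/p=1/p_1+1/p_2$, which is exactly the range in the corollary; its hypothesis on the smoothness is $s\in 2\nn$ or $s>\max(0,\tfrac np-n)$, again exactly as stated. The conclusion of the theorem then reads
$$
\|f\cdot g\|_{\dot W^{s,p}}=\|\T_{1}(f,g)\|_{\dot W^{s,p}}\lesssim\|\Psi\|_{W^{(n,n),2}}\bigl(\|f\|_{\dot W^{s,p_1}}\|g\|_{L^{p_2}}+\|f\|_{L^{p_1}}\|g\|_{\dot W^{s,p_2}}\bigr),
$$
and the finite constant $\|\Psi\|_{W^{(n,n),2}}$ is absorbed into the implicit constant, giving the asserted Leibniz rule.

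There is thus no real obstacle in the corollary itself; all the work lives upstream, in Theorem~\ref{multiplier 1} and, through its proof, in Theorem~\ref{main1} and Lemma~\ref{lem:4.2}. The only choice worth emphasizing is $r=n$: any $r\in(n/2,n]$ works for the constant symbol, but only $r=n$ drops the threshold $n/r$ on $p_1,p_2$ all the way down to $1$, thereby recovering the full classical range of \cite{MS,GO}; a smaller $r$ would yield the strictly weaker restriction $p_i>n/r>1$. Likewise, the restrictions on $s$ (an even integer, or $s>\max(0,n/p-n)$) are inherited verbatim from Theorem~\ref{multiplier 1}, where — following \cite{GO} — they are precisely what makes the series $\sum_{\mathbf m\in\zz^n}|C^{s}_{\mathbf m}|^{p_*}[\ln(1+|\mathbf m|)]^{2p_*}$ converge, with $p_*=\min(p,1)$, and are known to be essentially optimal.
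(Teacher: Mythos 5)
Your proposal is correct and matches the paper's intent exactly: the corollary is stated as an immediate consequence of Theorem~\ref{multiplier 1} with $m\equiv 1$, $\nu=0$, and the choice $r=n$ (so that the threshold $n/r$ becomes $1$), and your verifications that $m_k^0=\Psi\in W^{(r,r),2}$ and that $\T_1(f,g)=f\cdot g$ up to a normalization constant are precisely the two points that need checking. No gap.
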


In addition, it follows from the results in \cite{N} that the multipliers of order zero we are considering satisfy also
$$
\|\T_{m_0} (f,g) \|_{B^{s, p}_q}  \lesssim      \|f\|_{B^{s, p_1}_q}  \|g\|_{L^{p_2}} +   \|f\|_{L^{p_1}} \|g\|_{B^{s, p_2}_q}
$$
for $1 <p_1, p_2 <\infty$, $1 <p <\infty$, $1/p=1/p_1 + 1/p_2$,  $s>\max (0,\frac{n}{p}-n)$, and $0<q\leq \infty$, since they are bounded from $L^{p_1} \times L^{p_2}$ into $L^p$.

The case $\nu>0$ of Theorem \ref{multiplier 1} gives the smoothing of the bilinear fractional integral operators.

\begin{corollary}
Let $0<\nu<2n$ and let $1<p_1,p_2<\infty$ and $1/p=1/p_1+1/p_2$. Then for $f,g\in \mathcal{S}(\rr^n)$,
$$
  \|\I_\nu(f,g)\|_{\dot{W}^{s,p}} \lesssim \|f\|_{\dot{W}^{s-\nu,p_1}}\|g\|_{L^{p_2} }+\|f\|_{L^{p_1}}\|g\|_{\dot{W}^{s-\nu,p_2}}
$$
for  $s\in 2\nn$ or $s>\max(0,\frac{n}{p}-n)$.
\end{corollary}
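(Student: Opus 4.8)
The plan is to obtain the corollary as the special case of Theorem~\ref{multiplier 1} corresponding to the symbol
\[
m(\xi,\eta)=(|\xi|^2+|\eta|^2)^{-\nu/2},
\]
for which $\I_\nu=\T_m$ by the Fourier representation of $\I_\nu$ recalled in the introduction (the normalizing constant $C_\nu$ being absorbed into $m$). Thus the only thing to do is to check that this particular $m$ meets the hypotheses of Theorem~\ref{multiplier 1}, namely the size bound \eqref{size} and the Sobolev regularity \eqref{smooth} for some $n/2<r\le n$, and then to observe that the resulting range of exponents is precisely $1<p_1,p_2<\infty$.

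First I would verify \eqref{size}: since $(|\xi|^2+|\eta|^2)^{1/2}\approx|\xi|+|\eta|$, we have $|m(\xi,\eta)|\lesssim(|\xi|+|\eta|)^{-\nu}$, so the integral defining $\T_m$ is absolutely convergent for Schwartz $f,g$. Next, the key observation is that $m$ is \emph{exactly} homogeneous of degree $-\nu$, so that for every $k\in\zz$,
\[
m_k^\nu(\xi,\eta)=2^{k\nu}m(2^k\xi,2^k\eta)\Psi(\xi,\eta)=2^{k\nu}\cdot 2^{-k\nu}\,m(\xi,\eta)\Psi(\xi,\eta)=m(\xi,\eta)\Psi(\xi,\eta),
\]
which does not depend on $k$. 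Because $\Psi$ is supported in the annulus $\{1/2\le|(\xi,\eta)|\le 2\}$, on a neighborhood of which $m$ is $C^\infty$ and bounded (it avoids its only singularity, the origin), the product $m\Psi$ belongs to $C_c^\infty(\rr^{2n})$ and hence to $W^{(r,r),2}(\rr^{2n})$ for every $r>0$. Consequently
\[
\sup_{k\in\zz}\|m_k^\nu\|_{W^{(r,r),2}(\rr^{2n})}=\|m\Psi\|_{W^{(r,r),2}(\rr^{2n})}<\infty,
\]
so \eqref{smooth} holds for any $r$; in particular I would take $r=n$, which satisfies $n/2<r\le n$.

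With $r=n$, Theorem~\ref{multiplier 1} applies for all $p_i$ with $n/r=1<p_i<\infty$ ($i=1,2$) and $1/p=1/p_1+1/p_2$, yielding, for $s\in 2\nn$ or $s>\max(0,\tfrac{n}{p}-n)$ and $f,g\in\S(\rr^n)$,
\[
\|\I_\nu(f,g)\|_{\dot W^{s,p}}=\|\T_m(f,g)\|_{\dot W^{s,p}}\lesssim\|f\|_{\dot W^{s-\nu,p_1}}\|g\|_{L^{p_2}}+\|f\|_{L^{p_1}}\|g\|_{\dot W^{s-\nu,p_2}},
\]
which is exactly the assertion of the corollary. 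There is no real obstacle here: the only computational step is the verification of \eqref{smooth}, and it collapses completely because the exact homogeneity of the kernel of $\I_\nu$ makes the localized symbols $m_k^\nu$ literally constant in $k$ and smooth. (Alternatively one could feed $m$ directly into Theorem~\ref{main1}, checking hypotheses (i)--(ii) via the Kenig--Stein mapping property \eqref{basicfrac} of $\I_\nu$ together with vector-valued maximal function estimates, but routing through Theorem~\ref{multiplier 1} is the quickest path.)
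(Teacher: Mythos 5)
Your proposal is correct and follows exactly the route the paper intends: the corollary is stated as the case $\nu>0$ of Theorem~\ref{multiplier 1} applied to $m(\xi,\eta)=(|\xi|^2+|\eta|^2)^{-\nu/2}$, and your verification that the exact homogeneity makes $m_k^\nu=m\Psi$ independent of $k$ (hence \eqref{smooth} holds for every $r$, in particular $r=n$, giving the full range $1<p_1,p_2<\infty$) is precisely the check the paper leaves implicit. Nothing is missing.
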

The proof of Theorem \ref{multiplier 1} needs the following lemma.
\begin{lemma}\cite{FT,GMT}\label{lem: maximal}
Let $R>0$, $r>n/2$, and
$\max\{1,\frac{n}{r}\}<l<2$. Then there exists a constant $C>0$
such that
\begin{align*}
\left|\int_{\mathbb{R}^{2n}} 2^{2jn}  \widehat{\sigma}(2^{j}(x-y_1), 2^j(x-y_2))
f(y_1) g(y_2)dy_1 \, dy_2\right|\\
\le C\|\sigma\|_{W^{(r,r),2}} (M(|f|^l))^{1/l}(x)(M(|g|^l))^{1/l}(x)
\end{align*}
for all $j\in \mathbb{Z}, \sigma\in W^{(r,r),2}(\mathbb{R}^{2n})$
with supp $\sigma\subset \{\sqrt{|\xi|^2+ |\eta|^2}\le R\}$
and $f, g\in \mathcal{S}(\mathbb{R}^n).$
\end{lemma}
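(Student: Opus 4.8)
The plan is to reduce to a scale‑invariant statement by a change of variables, then split $z$‑space into dyadic annuli and pit the $L^\infty$ and weighted $L^2$ information on $\widehat\sigma$ against the Hardy--Littlewood maximal functions of $|f|^l$ and $|g|^l$. Writing $K:=\widehat\sigma$ and substituting $z_i=2^j(x-y_i)$, the two Jacobians cancel the factor $2^{2jn}$, so uniformly in $j$ and $x$ it suffices to prove
$$\left|\int_{\R^{2n}}K(z_1,z_2)\,f(x-2^{-j}z_1)\,g(x-2^{-j}z_2)\,dz_1\,dz_2\right|\lesssim_R\|\sigma\|_{W^{(r,r),2}}\big(M(|f|^l)(x)\big)^{1/l}\big(M(|g|^l)(x)\big)^{1/l}.$$
The crucial first step is a pointwise decay bound $|K(z_1,z_2)|\lesssim_R(1+|z_1|)^{-r}(1+|z_2|)^{-r}\|\sigma\|_{W^{(r,r),2}}$, which is where compact support of $\sigma$ enters. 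For integer $r$ I would differentiate under the integral, using $z_1^\alpha z_2^\beta K=\pm\widehat{\partial_\xi^\alpha\partial_\eta^\beta\sigma}$ and the fact that $\partial_\xi^\alpha\partial_\eta^\beta\sigma$ is again supported in $B(0,R)$, so $\|\partial_\xi^\alpha\partial_\eta^\beta\sigma\|_{L^1}\le|B(0,R)|^{1/2}\|\sigma\|_{W^{(r,r),2}}$ for $|\alpha|,|\beta|\le r$. For general $r$ I would instead fix $\phi\in C_c^\infty(\R^{2n})$ with $\phi\equiv1$ on $B(0,R)$, write $\sigma=\phi\sigma$, hence $K=c\,\widehat\phi\ast\widehat\sigma$, and bound $|K(z)|$ by Cauchy--Schwarz after inserting the weight $(1+|w_1|^2)^{r/2}(1+|w_2|^2)^{r/2}$, using $\|(1+|w_1|^2)^{r/2}(1+|w_2|^2)^{r/2}\widehat\sigma\|_{L^2}=\|\sigma\|_{W^{(r,r),2}}$, the rapid decay of $\widehat\phi$, and Peetre's inequality $(1+|w_i|)^{-r}\lesssim(1+|z_i-w_i|)^{r}(1+|z_i|)^{-r}$.

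Next I would set $\R^n=\bigsqcup_{a\ge0}A_a$ with $A_0=\{|z|\le1\}$ and $A_a=\{2^{a-1}<|z|\le2^a\}$ for $a\ge1$, and similarly $\R^n=\bigsqcup_{b\ge0}B_b$ in the $z_2$ variable. On $A_a\times B_b$ the decay factor from the first step is $\approx2^{-r(a+b)}$, so $\|K\|_{L^\infty(A_a\times B_b)}\lesssim_R2^{-r(a+b)}\|\sigma\|_{W^{(r,r),2}}$, and factoring that same supremum out of the weighted $L^2$ identity gives $\|K\|_{L^2(A_a\times B_b)}\lesssim2^{-r(a+b)}\|\sigma\|_{W^{(r,r),2}}$. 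Since $1\le l\le2$ forces $2\le l'\le\infty$, the elementary interpolation $\|K\|_{L^{l'}(E)}\le\|K\|_{L^2(E)}^{2/l'}\|K\|_{L^\infty(E)}^{1-2/l'}$ then yields $\|K\|_{L^{l'}(A_a\times B_b)}\lesssim_R2^{-r(a+b)}\|\sigma\|_{W^{(r,r),2}}$ — with the same exponent $r$, which is the key gain from compact support of $\sigma$. On the other factor, the substitution $w=2^{-j}z_1$ carries $A_a$ into a ball of radius $\lesssim2^{a-j}$ about $x$, so $\|f(x-2^{-j}\cdot)\|_{L^l(A_a)}\lesssim2^{an/l}\big(M(|f|^l)(x)\big)^{1/l}$, and likewise for $g$ on $B_b$.

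Finally I would apply Hölder with exponents $l'$ and $l$ on each $A_a\times B_b$ (the $L^l$ norm of $(z_1,z_2)\mapsto f(x-2^{-j}z_1)g(x-2^{-j}z_2)$ factoring as a product over the two variables) and sum, which gives
\begin{align*}
&\sum_{a,b\ge0}\|K\|_{L^{l'}(A_a\times B_b)}\,\|f(x-2^{-j}\cdot)\|_{L^l(A_a)}\,\|g(x-2^{-j}\cdot)\|_{L^l(B_b)}\\
&\qquad\lesssim_R\|\sigma\|_{W^{(r,r),2}}\big(M(|f|^l)(x)\big)^{1/l}\big(M(|g|^l)(x)\big)^{1/l}\sum_{a,b\ge0}2^{(a+b)(n/l-r)},
\end{align*}
and the double geometric series converges precisely because $l>n/r$; the resulting constant depends only on $n,R,r,l$, and is in particular uniform in $j$, since $j$ disappears after the initial scaling and cancels in the last change of variables. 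I expect the only real obstacle to be the pointwise decay estimate in the first step — obtaining it with the \emph{same} power $r$ in the $L^\infty$ and weighted $L^2$ bounds (one cannot differentiate $\sigma$ fractionally while keeping compact support, hence the cutoff–convolution device) — after which the rest is routine bookkeeping. Note that $l\le2$ is used only for the $L^2$–$L^\infty$ interpolation of $\|K\|_{L^{l'}}$, while $l>\max(1,n/r)$ is used only to run Hölder with the pair $(l,l')$ and to sum the final series.
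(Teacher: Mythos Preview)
Your argument is correct. The paper does not supply its own proof of this lemma; it simply cites \cite{FT,GMT}, and the approach you outline---scaling away $j$, establishing the pointwise product-type decay $|\widehat\sigma(z_1,z_2)|\lesssim_R(1+|z_1|)^{-r}(1+|z_2|)^{-r}\|\sigma\|_{W^{(r,r),2}}$ via the cutoff--convolution--Peetre device, decomposing into dyadic annuli in each variable, interpolating the $L^{l'}$ norm of the kernel between the weighted $L^2$ bound and the $L^\infty$ bound, and closing with the maximal function and a geometric sum in $n/l-r<0$---is essentially the argument found in those references. One minor remark: your phrase ``factoring that same supremum out of the weighted $L^2$ identity'' is slightly opaque; what you mean (and what works) is that on $A_a\times B_b$ the weight $(1+|z_1|)^r(1+|z_2|)^r$ is bounded below by $c\,2^{r(a+b)}$, so the unweighted $L^2$ norm of $K$ there is at most $c^{-1}2^{-r(a+b)}\|\sigma\|_{W^{(r,r),2}}$ directly from the definition of the Sobolev norm.
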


\noindent\textit{Proof of Theorem \ref{multiplier 1}.}
We need to show that $m$ satisfies the assumptions of Theorem \ref{main1}.

We first note that the size condition $|m(\xi,\eta)|\lesssim (|\xi|+|\eta|)^{-\nu}$ follows from our hypothesis $m_k^\nu \in W^{(r,r),2}$.
In fact, let $$m_0(\xi,\eta)=m(\xi,\eta)|(\xi,\eta)|^\nu.$$
It suffices to verify that $|m_0(2^j\xi, 2^j\eta)|$ is bounded uniformly in $j$ for $(\xi,\eta)$ satisfying $1/2\le |(\xi,\eta)|\le 2$. Let $\Psi$ be a function satisfying \eqref{approx1}. Then for  $1/2\le |(\xi,\eta)|\le 2$,
\begin{align*}
|m_0(2^j\xi,2^j\eta)|\le \sum_{-1\le l\le 1}|m_0(2^j\xi,2^j\eta)\Psi(2^l\xi,2^l\eta)|.
\end{align*}
We estimate the term for $l=0$ as the other ones can be treated in exactly the same way.
 For $1/2\le |(\xi,\eta)|\le 2$,
 \begin{align*}
|m_0(2^j\xi,2^j\eta)  \Psi(\xi,\eta)| &
\approx |2^{j\nu}m(2^j\xi,2^j\eta)\Psi(\xi,\eta)| \\
&\approx  \int_{\rr^{2n}}| (2^{j\nu}m(2^j\cdot,2^j\cdot)\Psi)^{\wedge}(x,y)| dxdy\\
&\le \int_{\rr^{2n}} (1+|x|^2)^{-r/2}(1+|y|^2)^{-r/2}  \\
&  \,\,\,\,\,\times (1+|x|^2)^{r/2}(1+|y|^2)^{r/2} |(2^{j\nu}m(2^j\cdot)\Psi)^{\wedge}(x,y)|dxdy\\
& \lesssim \sup_{k\in \zz}\|m_k^\nu\|_{W^{(r,r),2}}<\infty.
\end{align*}

 Next, for a real number $r$, denote by $\lfloor r \rfloor$ the greatest integer function of $r$.
  Since $W^{(r,r),2}$ is a multiplication algebra, for any $\Phi_\nu$ satisfying \eqref{3.2} and $\widetilde{\Psi}$ in $C_0^\infty(\R^{2n})$ with $\Psi=\Psi\widetilde{\Psi}$, we have
\begin{align*}
 \|(m\Phi_\nu)_k^0\|_{W^{(r,r),2}}&=\|m(2^k\xi,2^k\eta)\Phi_\nu(2^k\xi,2^k\eta)\Psi(\xi,\eta)\|_{W^{(r,r),2}}\\
&=\|m(2^k\xi,2^k\eta)\Phi_\nu(2^k\xi,2^k\eta)\Psi(\xi,\eta)\widetilde{\Psi}(\xi,\eta)\|_{W^{(r,r),2}}\\
&\le \|2^{k\nu}m(2^k\xi,2^k\eta)\Psi(\xi,\eta)\|_{W^{(r,r),2}} \\
&\hspace{3cm}\times \|2^{-k\nu}\Phi_\nu(2^k\xi,2^k\eta)\widetilde{\Psi}(\xi,\eta)\|_{W^{(r,r),2}}\\
&\lesssim \|m_k^\nu\|_{W^{(r,r),2}}  \|2^{-k\nu}\Phi_\nu(2^k\xi,2^k\eta)\widetilde{\Psi}(\xi,\eta)\|_{W^{(\lfloor r \rfloor+1,\lfloor  r \rfloor+1),2}}\\
&\lesssim \|m_k^\nu\|_{W^{(r,r),2}},
\end{align*}
since it is easy to verify using \eqref{3.2} that
$$\|2^{-k\nu}\Phi_\nu(2^k\xi,2^k\eta)\widetilde{\Psi}(\xi,\eta)\|_{W^{(\lfloor  r \rfloor+1,\lfloor  r \rfloor+1),2}}\lesssim1.$$
It follows that  $\T_{m\Phi_\nu}$ is a bilinear Fourier multiplier studied by Miyachi-Tomita in \cite{MT},
and hence the $L^{p_1}\times L^{p_2}\rightarrow L^p$ boundedness (with norm controlled by $\sup_k\|m_k^\nu\|_{W^{(r,r),2}}$).
Finally, given $\{\Phi_\nu^k\}\in {\M}_\nu(\R^n)$
set $$m_k(\xi,\eta)=m(2^k\xi,2^k\eta)\Phi_\nu^k(2^k\xi,2^k\eta)$$ for $k\in \zz$.
Then,

\begin{align*}
& \T_{m\Phi_\nu^k}(f_k,g_k) \\
& = \int_{\R^{2n}} e^{i(\xi+\eta)x}m(\xi,\eta) \Phi_\nu^k(\xi,\eta)\w {f_k}(\xi) \w {g_k}(\eta)d\xi d\eta\\
&=\int_{\R^{2n}} e^{i(\xi+\eta)x}m_k(2^{-k}\xi,2^{-k}\eta) \w {f_k}(\xi) \w {g_k}(\eta)d\xi d\eta\\
&\approx2^{2kn}\int_{\R^{2n}} \mathcal{F}^{-1}m_k(2^{k}(x-y_1),2^{k}(x-y_2))  {f_k}(y_1)  {g_k}(y_2)dy_1 dy_2,
\end{align*}
where
$$
m_k(\xi,\eta)=m(2^{k}\xi,2^{k}\eta) \Phi_\nu^k(2^{k}\xi,2^{k}\eta).
$$
Choose $k_0\in \zz_+$ such that supp~$\Phi_\nu^k\subset \{2^{k-k_0}\le |(\xi,\eta)|<2^{k+k_0}\}$.
Using again that $W^{(r,r),2}$ is a multiplication algebra and a function $\tilde \Psi$ as before,
\begin{align*}
& \|m_k\|_{W^{(r,r),2}}  \\
& \le \sum_{j=-k_0}^{k_0+1}\|m(2^k\cdot)\Phi_\nu^k(2^k\cdot)\Psi(2^{-j}\cdot) \|_{W^{(r,r),2}}\\
&\le \sum_{j=-k_0}^{k_0+1} 2^{\max (0,j)2t} 2^{-jn} \|m(2^{k+j}\cdot)\Phi_\nu^k(2^{k+j}\cdot)\Psi \|_{W^{(r,r),2}}\\
&\lesssim \sum_{j=-k_0}^{k_0+1}\|2^{(k+j)\nu} m(2^{k+j}\cdot) \Psi\|_{W^{(r,r),2}} \|2^{-(k+j)\nu}\Phi_\nu^k(2^{k+j}\cdot)  \tilde \Psi \|_{W^{(r,r),2}} \\
&\lesssim \sup_{k\in\zz}\|m_k^\nu \|_{W^{(r,r),2}} \sum_{j=-k_0}^{k_0+1}\|2^{-(k+j)\nu}\Phi_\nu^k(2^{k+j}\cdot)
\tilde \Psi  \|_{W^{(\lfloor r \rfloor+1,\lfloor r \rfloor+1),2}} \\
&\lesssim \sup_{k\in\zz}\|m_k^\nu \|_{W^{(r,r),2}}.
\end{align*}
Applying Lemma \ref{lem: maximal}, we obtain
\begin{equation}\label{3.8}
|\T_{m\Phi_\nu^k}(f,g)(x)|
 \lesssim \sup_{j\in\zz}\|m_j^\nu \|_{W^{(r,r),2}} (M(|f|^l))^{1/l}(x)(M(|g|^l))^{1/l}(x)
\end{equation}
for $l\in (n/r,\min\{2,p_1,p_2\})$.
From this, Cauchy-Schwarz inequality and the Fefferman-Stein \cite{FS} vector-valued maximal estimate, we have
\begin{align*}
 & \left\|\sum_k|\T_{m\Phi_\nu^k}(f_k,g_k)|\right\|_{L^{p}}    \\
&  \lesssim  
A_m \left\|\(\sum_k ((M(|f_k|^l))^{2/l}\)^{1/2} \(\sum_k ((M(|g_k|^l))^{2/l}\)^{1/2}\right\|_{L^{p}}\\
    &  \lesssim  
A_m    \left\|\(\sum_k ((M(|f_k|^l))^{2/l}\)^{1/2}\right\|_{L^{p_1}}
    \left\|\(\sum_k ((M(|g_k|^l))^{2/l}\)^{1/2}\right\|_{L^{p_2}}\\
    &  \lesssim  
  A_m  \left\|\(\sum_k ((M(|f_k|^l))^{2/l}\)^{l/2}\right\|^{1/l}_{L^{p_1/l}}
    \left\|\(\sum_k ((M(|g_k|^l))^{2/l}\)^{l/2}\right\|^{1/l}_{L^{p_2/l}}\\
    &\lesssim 
A_m    \|\{f_k\}\|_{L^{p_1}}\|\{g_k\}\|_{L^{p_2}},
\end{align*}
where $A_m=\sup_{j\in\zz}\|m_j^\nu \|_{W^{(r,r),2}}$.
The result follows now by applying Theorem \ref{main1}. \hfill $\Box$

\section{Bilinear multiplier on mixed Lebesgue spaces}

In this section, we show how to extend the result of the previous one to the context of mixed Lebesgue spaces. We will use the following version of the Fefferman-Stein inequality, which can be found in \cite{Fer} and \cite{K}.

\begin{lemma}[\cite{Fer, K}]\label{maximalpq}
Let $\{f_j\}$ be a sequence of locally integrable functions in $\rr^{n+1}$ and $M$ the Hardy-Littlewood maximal operator also in $\rr^{n+1}$. Then for $1<p,q,r<\infty,$
$$
\left\|\left(\sum_{j}|M(f_j)|^r\right)^{1/r}\right\|_{L^pL^q} \lesssim \left\|\left(\sum_{j}|f_j|^r\right)^{1/r}\right\|_{L^pL^q}.
$$
\end{lemma}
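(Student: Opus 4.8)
The plan is to peel off the two groups of variables one at a time, reducing the $(n+1)$-dimensional vector-valued maximal inequality to the classical Fefferman--Stein inequality on $\R^n$ and on $\R$. The starting point is the pointwise bound
\[
Mf(t,x)\lesssim M_x(M_t f)(t,x),
\]
where $M_t$ (resp.\ $M_x$) denotes the Hardy--Littlewood maximal operator acting only in the variable $t$ (resp.\ $x$): this holds because any Euclidean ball $B\subset\R^{n+1}$ is contained in a product $I\times Q$ of a one-dimensional interval $I$ and a ball $Q\subset\R^n$ with $|I\times Q|\approx|B|$, so the average of $|f|$ over $B$ is dominated by first averaging in $x$ over $Q$ and then in $t$ over $I$. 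It therefore suffices to prove the vector-valued maximal inequality in $L^pL^q(\ell^r)$ separately for $M_x$ and for $M_t$ and then compose the two bounds.

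For the $M_x$ piece the geometry is favourable, since the inner norm is exactly the $L^q_x$ in which $M_x$ operates: freezing $t$ and applying the classical vector-valued Fefferman--Stein inequality on $\R^n$ \cite{FS} gives, for $1<q,r<\infty$, the bound $\|(\sum_j|M_x h_j|^r)^{1/r}\|_{L^q(\R^n)}\lesssim\|(\sum_j|h_j|^r)^{1/r}\|_{L^q(\R^n)}$; raising to the $p$-th power and integrating in $t$ then reduces everything to the one-variable-type estimate $\|(\sum_j|M_t f_j|^r)^{1/r}\|_{L^pL^q}\lesssim\|(\sum_j|f_j|^r)^{1/r}\|_{L^pL^q}$.

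This last estimate for $M_t$ is the crux of the argument, and the only point requiring more than the classical scalar and $\ell^r$-valued theory, because here a one-variable maximal operator has to be controlled with the auxiliary norm $L^q_x$ sitting between the $\ell^r$ summation and the outer $L^p_t$ norm, so no direct ``freeze a variable'' argument is available. I would interpret $(f_j)_j\mapsto(M_t f_j)_j$ as the one-dimensional Hardy--Littlewood maximal operator acting on functions on $\R_t$ with values in the Banach function space $Y=L^q(\R^n;\ell^r)$, that is, as the lattice maximal operator on $L^p(\R_t;Y)=L^pL^q(\ell^r)$. By the lattice (Banach-function-space) form of the Fefferman--Stein theorem, this operator is bounded on $L^p(\R_t;Y)$ for all $1<p<\infty$ provided the ordinary maximal operator is bounded on $Y$ and on its K\"othe dual $Y'=L^{q'}(\R^n;\ell^{r'})$ --- and both of these are again instances of the classical vector-valued Fefferman--Stein inequality on $\R^n$, valid because $1<q,r<\infty$. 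Composing this with the $M_x$ bound of the previous paragraph yields the lemma. (An alternative route for the $M_t$ step, bypassing the abstract lattice theorem, is to apply the one-dimensional $A_p$-weighted vector-valued Fefferman--Stein inequality in $t$ for each fixed $x$ and then use off-diagonal extrapolation to transfer the resulting family of weighted $L^p_t$ bounds to the mixed-norm space; in either case it is exactly this passage from a one-variable estimate to the mixed-norm lattice that is the main obstacle, every other ingredient being classical.)
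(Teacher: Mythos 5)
The paper does not actually prove this lemma: it is imported verbatim from Fern\'andez \cite{Fer} and Kurtz \cite{K}, so there is no in-house argument to compare yours against. Your outline is essentially correct and, importantly, puts the finger on the right difficulty. The pointwise domination $Mf\lesssim M_x(M_tf)$ and the fiberwise Fefferman--Stein bound for $M_x$ (freeze $t$, apply the classical inequality on $\R^n$, integrate the $p$-th power) are unproblematic, and the entire content of the lemma is indeed the $M_t$ estimate, where a one-dimensional averaging operator must pass through the inner $L^q_x(\ell^r)$ norm. The one imprecise point is the hypothesis you attach to the lattice Fefferman--Stein theorem: the boundedness of the lattice maximal operator on $L^p(\R_t;Y)$ is governed by the Hardy--Littlewood property of the lattice $Y$ (Bourgain, Rubio de Francia), for which the clean sufficient condition is that $Y$ is UMD --- true for $Y=L^q(\R^n;\ell^r)$ when $1<q,r<\infty$ --- whereas ``$M$ bounded on $Y$ and on $Y'$'' concerns the maximal operator acting \emph{inside} $Y$ (in the $x$-variable) and is not, as stated, a recognized sufficient condition for controlling averages in the new variable $t$; taken over the full space $\R^{n+1}$ that condition would even be circular, since it is the lemma itself. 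For the particular $Y$ at hand the conclusion survives via the UMD route, so this is a citation issue rather than a gap. Your alternative route --- the one-dimensional weighted vector-valued maximal inequality for all $A_p(\R)$ weights in $t$ followed by Rubio de Francia extrapolation into the UMD-lattice-valued setting --- is the easiest way to make the $M_t$ step fully rigorous and is closest in spirit to \cite{K}, which works with product weights; \cite{Fer} instead proceeds through Benedek--Calder\'on--Panzone-type iteration for Banach-space-valued operators. Any of these implementations of the $M_t$ step closes the argument.
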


The following is the version of Theorem \ref{main1} in $L^pL^q(\R^{n+1})$.

\begin{theorem}\label{main2}
Let $m_\nu$ be a multiplier satisfying \eqref{size} in $\rr^{n+1}$ for some
$0\le\nu<2(n+1)$, and let $1<p_1,p_2, q_1, q_2< \infty$ and $p$ and $q$ be such that $1/p_1+1/p_2=1/p$ and
$1/q_1+1/q_2=1/q$. Suppose that
\begin{enumerate}
  \item[(i)] for any $\Phi_\nu \in C^\infty(\R^{2(n+1)}\setminus \{0\})$ satisfying \eqref{3.2},
  $\T_{m_\nu\Phi_\nu}$ is bounded from $L^{p_1}L^{q_1}\times L^{p_2}L^{q_2}$ to $L^pL^{q}$ with norm $C_1$, and
  \item[(ii)] for any sequences $\{\Phi_\nu^k\}\in \M_\nu(\R^{n+1})$, $\{f_k\}\in L^{p_1}L^{q_1}(\ell^2)$, and  $\{g_k\}\in L^{p_2}L^{q_2}(\ell^2)$
$$
\|\{\T_{m_\nu\Phi_\nu^k}(f_k,g_k)\}_{k\in \Z}\|_{L^pL^{q}(\ell^1)} \le C_2\|\{f_k\}_{k\in\Z}\|_{L^{p_1}L^{q_1}(\ell^2)}
\|\{g_k\}_{k\in\Z}\|_{L^{p_2}L^{q_2}(\ell^2)}.
$$
\end{enumerate}
Then for $s\in 2\nn$ or $s>\max(0,\frac{n+1}{p}-(n+1), \frac{n+1}{q}-(n+1))$, and $f,g\in \S(\rr^{n+1})$,
\begin{align*}\label{new3.5bis}
   & \|D^s\T_{m_\nu}(f,g)\|_{L^pL^q(\rr^{n+1})}  \le C' (\| D^{s-\nu}f\|_{L^{p_1}L^{q_1}(\rr^{n+1})} \|g\|_{L^{p_2}L^{q_2} (\rr^{n+1})}+\\
 & + \|f\|_{L^{p_1}L^{q_1}(\rr^{n+1})}\|D^{s-\nu}g\|_{L^{p_2}L^{q_2}(\rr^{n+1})  }).
 \end{align*}

 Moreover, if $C_1,C_2\lesssim A_{m_\nu}$ for some quantity $A_{m_\nu}$ depending on $m_\nu$, then $C'\lesssim A_{m_\nu}$.
\end{theorem}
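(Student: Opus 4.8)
The plan is to repeat the proof of Theorem~\ref{main1} line by line in $\R^{n+1}$, replacing $\|\cdot\|_{L^p}$ by the mixed quasinorm $\|\cdot\|_{L^pL^q}$, Lemma~\ref{lem:4.2} by its mixed-norm counterpart \eqref{LPmixed}, and the subadditivity exponent $\min(p,1)$ by $r:=\min(p,q,1)$. First I would apply the same paraproduct splitting, writing $D^s\T_{m_\nu}(f,g)=T_1(D^{s-\nu}f,g)+T_2(f,D^{s-\nu}g)+T_3(f,D^{s-\nu}g)$ with $T_i=\T_{m_\nu\Phi_\nu^{(i)}}$. For $i=1,2$ the factor $\Phi_\nu^{(i)}$ satisfies \eqref{3.2}, because $|\xi+\eta|$ stays bounded away from $0$ on its support, so hypothesis~(i) gives $\|T_1(D^{s-\nu}f,g)\|_{L^pL^q}\lesssim C_1\|D^{s-\nu}f\|_{L^{p_1}L^{q_1}}\|g\|_{L^{p_2}L^{q_2}}$ and likewise for $T_2$. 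If $s\in 2\nn$, then $|\xi+\eta|^s$ is a polynomial, $\Phi_\nu^{(3)}$ again satisfies \eqref{3.2}, and hypothesis~(i) settles $T_3$ too; it therefore only remains to bound $T_3$ when $s>\max(0,\tfrac{n+1}{p}-(n+1),\tfrac{n+1}{q}-(n+1))$.

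For that range I would reproduce the $T_3$ computation of Theorem~\ref{main1}: after rescaling the $k$-th frequency block by $2^k$ and expanding the compactly supported function $\w\phi_s(2^{-4}(\cdot))$ in a Fourier series over a cube in $\R^{n+1}$, the coefficients now satisfy $C_{\mathbf m}^s=O((1+|\mathbf m|)^{-s-(n+1)})$ as $|\mathbf m|\to\infty$ — this is the sole effect of the passage $n\mapsto n+1$. One thus arrives at the pointwise bound
\[
|T_3(f,g)(x)|\le\sum_{\mathbf m\in\zz^{n+1}}|C_{\mathbf m}^s|\sum_{k\in\zz}\big|\T_{m_\nu\Phi_\nu^{(3),k}}(\Delta_k^{\mathbf m}f,\widetilde\Delta_k^{\mathbf m}g)(x)\big|,
\]
with $\{\Phi_\nu^{(3),k}\}\in\M_\nu(\R^{n+1})$ and $\Delta_k^{\mathbf m},\widetilde\Delta_k^{\mathbf m}$ the translated Littlewood--Paley-type operators of Theorem~\ref{main1} (the extra factor $|\eta|^{-s}$ in the symbol of $\widetilde\Delta_k^{\mathbf m}$ keeps it annulus-supported, so \eqref{LPmixed} applies to $\widetilde\Delta_k^{\mathbf m}$ as well).

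Next I would take $\|\cdot\|_{L^pL^q}$ of both sides, raise to the power $r=\min(p,q,1)$, and use the elementary inequality $\big\|\sum_{\mathbf m}h_{\mathbf m}\big\|_{L^pL^q}^{\,r}\le\sum_{\mathbf m}\|h_{\mathbf m}\|_{L^pL^q}^{\,r}$ (valid for $r=\min(p,q,1)$; it follows from the $r$-subadditivity of $\|\cdot\|_{L^q_x}$ pointwise in $t$, since $r\le\min(q,1)$, together with Minkowski's inequality in $L^{p/r}_t$, since $r\le p$). This reduces matters to estimating, for each fixed $\mathbf m$, the quantity $\big\|\sum_k|\T_{m_\nu\Phi_\nu^{(3),k}}(\Delta_k^{\mathbf m}f,\widetilde\Delta_k^{\mathbf m}g)|\big\|_{L^pL^q}$, which hypothesis~(ii) dominates by $C_2\|\{\Delta_k^{\mathbf m}f\}\|_{L^{p_1}L^{q_1}(\ell^2)}\|\{\widetilde\Delta_k^{\mathbf m}g\}\|_{L^{p_2}L^{q_2}(\ell^2)}$; then \eqref{LPmixed}, applicable because $1<p_i,q_i<\infty$, bounds each factor by $\lesssim\ln(1+|\mathbf m|)\|f\|_{L^{p_1}L^{q_1}}$ and $\lesssim\ln(1+|\mathbf m|)\|g\|_{L^{p_2}L^{q_2}}$ respectively. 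Summing over $\mathbf m\in\zz^{n+1}$ leaves the series $\sum_{\mathbf m}(1+|\mathbf m|)^{-r(s+n+1)}[\ln(1+|\mathbf m|)]^{2r}$, which converges precisely when $r(s+n+1)>n+1$, i.e. when $s>(n+1)(\tfrac1r-1)=\max(0,\tfrac{n+1}{p}-(n+1),\tfrac{n+1}{q}-(n+1))$ — exactly the stated hypothesis. Combining the bounds for $T_1,T_2,T_3$ yields the assertion with $C'\lesssim C_1+C_2$, hence $C'\lesssim A_{m_\nu}$ whenever $C_1,C_2\lesssim A_{m_\nu}$.

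I do not expect any genuine obstacle in this argument. The only new analytic ingredient relative to the Lebesgue-space case, the translated Littlewood--Paley inequality in mixed norm \eqref{LPmixed}, is already available from the Benedek--Calder\'on--Panzone extrapolation of the vector-valued bound recalled after Lemma~\ref{lem:4.2}; the rest is bookkeeping, the one point worth care being that the sharp quasi-triangle exponent of $L^pL^q$ is $\min(p,q,1)$ rather than $\min(p,1)$, which is exactly what balances the coefficient decay $(1+|\mathbf m|)^{-s-(n+1)}$ against the squared logarithmic loss and produces the extra restriction $s>\tfrac{n+1}{q}-(n+1)$ alongside the familiar one in $p$. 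The genuinely delicate part of the mixed-norm theory — the Hardy space estimate \eqref{aim10} — does not enter Theorem~\ref{main2} itself; it will be needed only afterwards, to verify hypotheses~(i) and~(ii) for the concrete multipliers $\T_{m_\nu}$, where the mixed-norm Fefferman--Stein inequality (Lemma~\ref{maximalpq}) is not available for exponents $\le 1$.
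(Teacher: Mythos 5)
Your proposal is correct and follows essentially the same route as the paper: the same paraproduct splitting, the same rescaling and Fourier-series expansion with coefficients $C_{\mathbf m}^s=O((1+|\mathbf m|)^{-s-(n+1)})$, and hypothesis (ii) combined with \eqref{LPmixed} to handle $T_3$. The only (cosmetic) difference is that you sum over $\mathbf m$ in one stroke using the quasi-triangle exponent $r=\min(p,q,1)$ for $L^pL^q$, whereas the paper splits into the cases $q>1$ (exponent $\min(p,1)$) and $0<q\le 1$ (exponent $q\,\min(p/q,1)=\min(p,q)$); both yield the same convergence condition $\min(p,q,1)(s+n+1)>n+1$ and hence the same restriction on $s$.
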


\begin{proof}
Proceeding exactly as in Theorem \ref{main1}, we can write
$$D^s\T_{m_\nu}(f,g)(t,x)= $$$$ T_1(D^{s-\nu}f,g)(t,x)+T_2(f,D^{s-\nu}g)(t,x)+T_3(f,D^{s-\nu}g)(t,x).
$$
The first two terms present no differences from before and are easily bounded.  The same is true for the third one if $s$ is an even integer. Otherwise, we note that all the pointwise estimates used in the proof of Theorem \ref{main1} work in any dimension, so we can arrive at
$$
|T_3(f,g)(t,x)| \le  \sum_{\mathbf{m}\in \zz^{n+1}}|C_{\mathbf{m}}^s| \sum_{k\in\zz} | \T_{m\Phi_\nu^{(3),k}}(\Delta_k^{\mathbf m}(f), \widetilde{\Delta}_k^{\mathbf m}(g))(t,x) |,
$$
where now $\{\Phi_\nu^{(3),k}\}\in \M_\nu(\R^{n+1})$;
$$
\widehat{\Delta_k^{\mathbf m}(f)}(\xi)=e^{\frac{2\pi i}{c_0}2^{-k}\xi \mathbf{m}} \w{\psi}(2^{-k}\xi)\w f(\xi)
$$
and
$$
\widehat{\widetilde{\Delta}_k^{\mathbf m}(g)}(\eta)=e^{\frac{2\pi i}{c_0}2^{-k}\eta \mathbf m}\w {\widetilde{\psi}_{-s}}(2^{-k}\eta) \w {g}(\eta),
$$
whith $\psi$ and $\tilde \psi$  Littlewood-Paley functions in $\R^{n+1}$; and the coefficients $C_{\mathbf{m}}^s$ satisfy
$$
  C_{\mathbf{m}}^s=O((1+|\mathbf{m}|)^{-s-n-1})\quad\mbox{as}\quad |\mathbf{m}|\rightarrow \infty.
$$

We consider two cases.  If $q>1$,  let  again $p_*=\min(p,1)$.  By assumption (ii), $\T_{m\Phi_\nu^{(3),k}}$ is bounded from $L^{p_1}(\ell^2)\times L^{p_2}(\ell^2)$ to $L^p(\ell^1)$, which together with now \eqref{LPmixed} in place of \eqref{LPinL} , yield
\begin{align*}
&  \left\|\sum_{\mathbf{m}\in \zz^{n+1}}|C_{\mathbf{m}}^s| \Big|\sum_{k\in\zz} | \T_{m\Phi_\nu^{(3),k}}(\Delta_k^{\mathbf m}(f), \widetilde{\Delta}_k^{\mathbf m}(g)) \Big|\right\|_{L^pL^q}^{p_*}\\
&  \le\left\|\sum_{\mathbf{m}\in \zz^{n+1}}|C_{\mathbf{m}}^s| \Big\|\sum_{k\in\zz} | \T_{m\Phi_\nu^{(3),k}}(\Delta_k^{\mathbf m}(f), \widetilde{\Delta}_k^{\mathbf m}(g)) \Big\|_{L^q}\right\|_{L^p}^{p_*}\\
&  \le \sum_{\mathbf{m}\in \zz^{n+1}}|C_{\mathbf{m}}^s|^{p_*} \left\| \Big\|\sum_{k\in\zz} | \T_{m\Phi_\nu^{(3),k}}(\Delta_k^{\mathbf m}(f), \widetilde{\Delta}_k^{\mathbf m}(g)) \Big\|_{L^q}\right\|_{L^p}^{p_*}\\
&  \lesssim  \sum_{\mathbf{m}\in \zz^{n+1}}|C_{\mathbf{m}}^s|^{p_*}  \left\| \(\sum_{k\in\Z}|\Delta_k^{\mathbf m}(f)|^2\)^{1/2}\right\|_{L^{p_1}L^{q_1}}^{p_*}
  \left\|\(\sum_{k\in\Z}|\widetilde{\Delta}_k^{\mathbf m}(g)|^2\)^{1/2}\right\|_{L^{p_2}L^{q_2}}^{p_*}\\
&  \lesssim  \sum_{\mathbf{m}\in \zz^{n+1}}|C_{\mathbf{m}}^s|^{p_*} [\ln(1+|\mathbf{m}|)]^{2p_*} \left\| f\right\|_{L^{p_1}L^{q_1}}^{p_*}
  \left\| g \right\|_{L^{p_2}L^{q_2}}^{p_*}\\
&  \lesssim  \left\| f\right\|_{L^{p_1}L^{q_1}}^{p_*}
  \left\| g \right\|_{L^{p_2}L^{q_2}}^{p_*},
\end{align*}
since by the hypothesis on $s$,   $p_*(n+1+s)>n+1$.

If $0<q\le 1,$ we have with $(p/q)_*=\min(p/q,1)$
\begin{align*}
&  \left\|\sum_{\mathbf{m}\in \zz^{n+1}}|C_{\mathbf{m}}^s| \Big|\sum_{k\in\zz} | \T_{m\Phi_\nu^{(3),k}}(\Delta_k^{\mathbf m}(f), \widetilde{\Delta}_k^{\mathbf m}(g)) \Big|\right\|_{L^pL^q}^{q(\frac{p}{q})_*}\\
&  \le\left\|\sum_{\mathbf{m}\in \zz^{n+1}}|C_{\mathbf{m}}^s|^q \Big\|\sum_{k\in\zz} | \T_{m\Phi_\nu^{(3),k}}(\Delta_k^{\mathbf m}(f), \widetilde{\Delta}_k^{\mathbf m}(g)) \Big\|_{L^q}^q\right\|_{L^{\frac{p}{q}}}^{(\frac{p}{q})_*}\\
&  \le \sum_{\mathbf{m}\in \zz^{n+1}}|C_{\mathbf{m}}^s|^{q (\frac{p}{q})_*} \left\| \Big\|\sum_{k\in\zz} | \T_{m\Phi_\nu^{(3),k}}(\Delta_k^{\mathbf m}(f), \widetilde{\Delta}_k^{\mathbf m}(g)) \Big\|_{L^q}^q\right\|_{L^{\frac{p}{q}}}^{(\frac{p}{q})_*}\\
&  = \sum_{\mathbf{m}\in \zz^{n+1}}|C_{\mathbf{m}}^s|^{q (\frac{p}{q})_*} \left\| \sum_{k\in\zz} | \T_{m\Phi_\nu^{(3),k}}(\Delta_k^{\mathbf m}(f), \widetilde{\Delta}_k^{\mathbf m}(g)) \right\|_{L^pL^q}^{q(\frac{p}{q})_*}\\
&  \lesssim  \sum_{\mathbf{m}\in \zz^{n+1}}|C_{\mathbf{m}}^s|^{q(\frac{p}{q})_*}  \left\| \(\sum_{k\in\Z}|\Delta_k^{\mathbf m}(f)|^2\)^{1/2}\right\|_{L^{p_1}L^{q_1}}^{q(\frac{p}{q})_*} \!
  \left\|\(\sum_{k\in\Z}|\widetilde{\Delta}_k^{\mathbf m}(g)|^2\)^{1/2}\right\|_{L^{p_2}L^{q_2}}^{q(\frac{p}{q})_*}\\
&  \lesssim  \sum_{\mathbf{m}\in \zz^{n+1}}|C_{\mathbf{m}}^s|^{q(\frac{p}{q})_*} [\ln(1+|\mathbf{m}|)]^{2q(\frac{p}{q})_*} \left\| f\right\|_{L^{p_1}L^{q_1}}^{q(\frac{p}{q})_*}
  \left\| g \right\|_{L^{p_2}L^{q_2}}^{q(\frac{p}{q})_*}\\
&  \lesssim  \left\| f\right\|_{L^{p_1}L^{q_1}}^{q(\frac{p}{q})_*}
  \left\| g \right\|_{L^{p_2}L^{q_2}}^{q(\frac{p}{q})_*},
\end{align*}
again by the hypothesis on $s$ because $q(p/q)_*(n+1+s)$ is either $q(n+1+s)$ or $p(n+1+s)$, which are both bigger than $n+1$.
The result now follows.\end{proof}

We establish first the boundedness of bilinear multipliers of order zero with limited smoothness in mixed Lebesgue spaces following the techniques of \cite{Tomita}, \cite{FT}, and \cite{GMT}.

\begin{theorem}\label{multiplier0}
Let $(n+1)/2<r \le n+1$.
Let $(n+1)/r<p_i,q_i<\infty$, $i=1,2$, $1/p=1/p_1+1/p_2$ and $1/q=1/q_1+1/q_2$.
Suppose that $m$ satisfies \eqref{smooth} in $\rr^{n+1}$ with $\nu=0$.
Then, for all $f,g\in \mathcal{S}(\rr^{n+1})$,
\begin{align*}
 & \|\T_{m}(f,g)\|_{L^{p}L^q}  \lesssim \sup_{j\in \zz}\|m_j\|_{W^{(r,r),2}(\rr^{n+1})}  \|f\|_{L^{p_1}L^{q_1}}\|g\|_{L^{p_2}L^{q_2}}.
\end{align*}
\end{theorem}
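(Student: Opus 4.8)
The plan is to mimic the proof of Theorem~\ref{multiplier 1} (the Lebesgue-space version) almost verbatim, using the mixed-Lebesgue version of the Fefferman--Stein inequality (Lemma~\ref{maximalpq}) in place of the classical one, and the kernel estimate of Lemma~\ref{lem: maximal} for the building-block operators. Since $\nu=0$ here, the multiplier $m$ satisfies \eqref{smooth} with $\nu=0$, so the pieces $m_k = m_k^0 = m(2^k\cdot)\Psi$ are uniformly bounded in $W^{(r,r),2}(\rr^{n+1})$. As in the Lebesgue case, decompose $\T_m(f,g)$ using a Littlewood--Paley partition of unity: writing $m = \sum_k m\,\Psi(2^{-k}\cdot)$ and splitting the bilinear frequency support into the three standard paraproduct regions (high-low, low-high, and high-high/diagonal), it suffices to bound each piece $\T_{m\Phi^k}$ and then sum. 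Here each $\Phi^k$ is (after rescaling) a fixed $C^\infty_c$ bump, so $\{\Phi^k\}\in\M_0(\rr^{n+1})$ trivially.

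The core estimate is the pointwise bound, exactly as in \eqref{3.8}: after rescaling,
\begin{align*}
\T_{m\Phi^k}(f,g)(t,x) \approx 2^{2(n+1)k}\int_{\rr^{2(n+1)}}\mathcal{F}^{-1}\sigma_k\bigl(2^k(z-y_1),2^k(z-y_2)\bigr) f(y_1)g(y_2)\,dy_1\,dy_2,
\end{align*}
where $z=(t,x)$ and $\sigma_k(\xi,\eta)=m(2^k\xi,2^k\eta)\Phi^k(2^k\xi,2^k\eta)$ has compact support and, since $W^{(r,r),2}$ is a multiplication algebra, satisfies $\|\sigma_k\|_{W^{(r,r),2}(\rr^{2(n+1)})}\lesssim \sup_j\|m_j\|_{W^{(r,r),2}}$. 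Then Lemma~\ref{lem: maximal}, applied in dimension $n+1$ with $r>(n+1)/2$ and a suitable $l\in\bigl((n+1)/r,\min\{2,p_1,p_2,q_1,q_2\}\bigr)$, gives
\begin{align*}
|\T_{m\Phi^k}(f,g)(t,x)| \lesssim \sup_{j}\|m_j\|_{W^{(r,r),2}}\,\bigl(M(|f|^l)\bigr)^{1/l}(t,x)\,\bigl(M(|g|^l)\bigr)^{1/l}(t,x),
\end{align*}
with $M$ the Hardy--Littlewood maximal operator on $\rr^{n+1}$. Summing the three paraproduct pieces and using Cauchy--Schwarz in $k$, one reduces to controlling
$\bigl\|(\sum_k (M(|f_k|^l))^{2/l})^{1/2}(\sum_k (M(|g_k|^l))^{2/l})^{1/2}\bigr\|_{L^pL^q}$, which by H\"older in the mixed-norm setting (valid since $1/p=1/p_1+1/p_2$ and $1/q=1/q_1+1/q_2$), followed by Lemma~\ref{maximalpq} applied to the rescaled exponents $p_i/l$, $q_i/l$ and power $2/l$, is dominated by $\|f\|_{L^{p_1}L^{q_1}}\|g\|_{L^{p_2}L^{q_2}}$. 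Absolute convergence of the sum over the paraproduct pieces and the vanishing-moment/frequency-support conditions ensure everything is legitimate for Schwartz $f,g$.

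The main obstacle is making sure the mixed-norm H\"older inequality and the vector-valued maximal estimate interact correctly: one must verify that $l$ can be chosen strictly below $\min\{2,p_1,p_2,q_1,q_2\}$ and strictly above $(n+1)/r$, which needs precisely the hypothesis $(n+1)/r<p_i,q_i<\infty$ together with $(n+1)/2<r\le n+1$; and one must check that Lemma~\ref{maximalpq} does apply to $\{(M(|f_k|^l))^{2/l}\}$ with exponents $p_1/l,q_1/l>1$ and power $2/l>1$ (and similarly for $g$), which again is exactly where the lower bounds on $p_i,q_i$ are used. The rest is routine bookkeeping identical to the proof of Theorem~\ref{multiplier 1}, only with norms $L^{p_i}$ replaced by $L^{p_i}L^{q_i}$ throughout.
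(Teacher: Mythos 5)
Your decomposition into the three paraproduct regions and your treatment of the diagonal (high--high) piece are consistent with what the paper does (following \cite[(4.1)--(4.13)]{FT}), but the claimed reduction of \emph{all three} pieces to $\bigl\|(\sum_k (M(|f_k|^l))^{2/l})^{1/2}(\sum_k (M(|g_k|^l))^{2/l})^{1/2}\bigr\|_{L^pL^q}$ ``by Cauchy--Schwarz in $k$'' fails for the two off-diagonal pieces, and this is precisely where the real difficulty of the theorem lies. For the piece in which, say, $f$ carries the high frequency $\approx 2^k$ and $g$ the low frequencies (the piece $m_{2,1}$ in the paper's notation), only $f$ is frequency-localized; the low-frequency input is controlled merely by $M(|g|^l)^{1/l}$ uniformly in $k$, so $\sum_k M(|g_k|^l)^{2/l}$ diverges and Cauchy--Schwarz in $k$ gives nothing. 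The remedy the paper uses is to observe that the output of the $k$-th such piece is frequency-supported near $2^k$, to bound each $\Delta_j\T_{m_{2,1}}(f,g)$ pointwise by $\sum_{k=-2}^{2}M(|\tilde\Delta_{j+k}f|^l)^{1/l}M(|g|^l)^{1/l}$, and then to pass from the square function $(\sum_j|\Delta_j\T_{m_{2,1}}(f,g)|^2)^{1/2}$ back to the $L^pL^q$ norm of $\T_{m_{2,1}}(f,g)$. Under the hypotheses of the theorem the target exponents can satisfy $p\le 1$ or $q\le 1$ (e.g.\ $r=n+1$ and $p_1=p_2=3/2$ give $p=3/4$), so this last step is \emph{not} the classical Littlewood--Paley equivalence \eqref{twpq}: it is the embedding $H^{p,q}(\rr^{n+1})\hookrightarrow L^pL^q(\rr^{n+1})$ of \eqref{aim10}, i.e.\ Theorem \ref{h to l:f}, whose proof occupies the entire Appendix and which the authors single out as the new technical ingredient of the mixed-norm theory. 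Your proposal never confronts this step.

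A secondary point: you describe the argument as ``mimicking the proof of Theorem \ref{multiplier 1},'' but that proof establishes hypothesis (i) of Theorem \ref{main1} by quoting the Miyachi--Tomita boundedness result \cite{MT} as a black box; Theorem \ref{multiplier0} is exactly the mixed-norm substitute for that black box, so there is nothing to quote and the boundedness must be proved from scratch. The parts of your argument that do go through --- the pointwise bound \eqref{3.8} via Lemma \ref{lem: maximal} in dimension $n+1$, the choice of $l\in((n+1)/r,\min\{2,p_1,p_2,q_1,q_2\})$, H\"older in mixed norms, and Lemma \ref{maximalpq} --- suffice for the diagonal piece and for verifying hypothesis (ii) of Theorem \ref{main2}, but not for the off-diagonal paraproducts.
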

\begin{proof}
Following the arguments in, for example,  \cite[(4.1)-(4.13)]{FT}, we can decompose $m$ as the sum of three functions
$$
m(\xi,\eta)=
 m_{1,2}(\xi,\eta) +  m_{2,1}(\xi,\eta) + m_{2,2}(\xi,\eta).
$$
The arguments referred to again work in any dimension and lead to three bilinear multiplier operators satisfying the pointwise estimates
\begin{align*}
&|\T_{m_{2,2}}(f,g)(t,x)|
\lesssim \left( \sup_k\|m_{k}\|_{W^{(r,r),2}}\right) \\
&\,\,\,\,\, \times \left(\sum_{k\in\zz} M(|\tilde{\Delta}_{k})f|^l)(t,x)^{2/l}\right)^{1/2}
\left(\sum_{k\in\zz} M(|\tilde{\Delta}_{k})g|^l)(t,x)^{2/l}\right)^{1/2},
\end{align*}
\begin{align*}
&|\Delta_j\T_{m_{2,1}}(f,g)(t,x)| \\
&\lesssim \left( \sup_k\|m_{k}\|_{W^{(r,r),2}}\right)  \sum_{k=-2}^{2} M(|\tilde{\Delta}_{j+k})f|^l)(t,x)^{1/l} M(|g|^l)(t,x)^{1/l},
\end{align*}
and
\begin{align*}
&|\Delta_j\T_{m_{1,2}}(f,g)(t,x)| \\
&\lesssim \left( \sup_k\|m_{k}\|_{W^{(r,r),2}}\right)  \sum_{k=-2}^{2} M(|\tilde{\Delta}_{j+k})g|^l)(t,x)^{1/l}
M(|f|^l)(t,x)^{1/l}.
\end{align*}
where $\tilde{\Delta}_{j}$ is another Littlewood-Paley operator and $\max\{1,\frac{n+1}{r}\}<l<2$.

Choosing  now $\max\{1,\frac{n+1}{r}\}<l<\min(p_1,p_2,q_1,q_2,2)$ (which is possible by hypothesis),
using H\"older's inequality in mixed Lebesgue spaces, Lemma \ref{maximalpq}, and \eqref{twpq}, we obtain
\begin{align*}
\|\T_{m_{2,2}} (f, g)\|_{L^pL^q}
&\lesssim \sup_k\|m_{k}\|_{W^{(r,r),2}} \|f\|_{L^{p_1}L^{q_1}}\|g\|_{L^{p_2}L^{q_2}}.
\end{align*}

To estimate $\T_{m_{2,1}}$ and $\T_{m_{1,2}}$, however, is where, if either $p\leq 1$ or $q\leq 1$, we need to use the new estimate \eqref{aim10}, which is proved in the Appendix. It follows from such estimate that we can still control the $L^pL^q$ norm by the $H^{p,q}$ one. Therefore,
\begin{align*}
& \|\T_{m_{2,1}} (f, g)\|_{L^pL^q} \lesssim \left\|\left(\sum_j|\Delta_j\T_{m_{2,1}}(f,g)|^2\right)^{1/2}\right\|_{L^pL^q}\\
&\lesssim \sup_k\|m_{k}\|_{W^{(r,r),2}} \bigg\|\Big(\sum_j|M(|\tilde{\Delta}_{j} f_1|^l)^{2/l}\Big)^{1/2}\bigg\|_{L^{p_1}L^{q_1}}\|M(|f_2|^l)^{1/l}\|_{L^{p_2}L^{q_2}}\\
&\lesssim \sup_k\|m_{k}\|_{W^{(r,r),2}} \|f\|_{L^{p_1}L^{q_1}}\|g\|_{L^{p_2}L^{q_2}}.
\end{align*}

Similarly, we can prove
\begin{align*}
\|\T_{m_{1,2}} (f, g)\|_{L^pL^q}
&\lesssim \sup_k\|m_{k}\|_{W^{(r,r),2}} \|f\|_{L^{p_1}L^{q_1}}\|g\|_{L^{p_2}L^{q_2}},
\end{align*}
and the boundedness of $T_m$ follows.
\end{proof}

We finally now extend Theorem \ref{multiplier 1} to the ${L^{p}L^{q}}(\R^{n+1})$.

\begin{theorem}\label{multiplier 2}
Suppose that $m$ satisfies \eqref{smooth} for $0\le\nu<2(n+1)$ and $(n+1)/2<r\le n+1$.
Let $p_i,q_i\in (\frac{n+1}{r},\infty)$, $i=1,2$, $1/p=1/p_1+1/p_2$, and $1/q=1/q_1+1/q_2$.
Then for $s\in 2\nn$ or $s>\max(0,\frac{n+1}{p}-n-1,\frac{n+1}{q}-n-1)$,
$$
  \|D^s\T_{m}(f,g)\|_{L^{p}L^{q}} \lesssim $$$$\sup_{j\in \zz}\|m_j^\nu\|_{W^{(r,r),2}} (\|D^{s-\nu} f\|_{L^{p_1}L^{q_1}}\|g\|_{L^{p_2}L^{q_2} }+\|f\|_{L^{p_1}L^{q_1}}\|D^{s-\nu}g\|_{L^{p_2}L^{q_2}})
$$
for all $f,g\in \mathcal{S}(\rr^{n+1})$.
\end{theorem}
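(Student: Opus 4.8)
The plan is to derive Theorem~\ref{multiplier 2} from Theorem~\ref{main2} in exactly the way Theorem~\ref{multiplier 1} was derived from Theorem~\ref{main1}, with Theorem~\ref{multiplier0} now taking over the role that the Miyachi--Tomita multiplier theorem played in the Lebesgue-space argument. Setting $A_{m_\nu}=\sup_{j\in\zz}\|m_j^\nu\|_{W^{(r,r),2}}$, I would check that $m$ satisfies the hypotheses of Theorem~\ref{main2}: the size bound \eqref{size} in $\rr^{n+1}$, condition~(i), and condition~(ii). The size bound follows from \eqref{smooth} by the identical computation used at the start of the proof of Theorem~\ref{multiplier 1}; that argument only multiplies and divides $\widehat{(2^{j\nu}m(2^j\cdot)\Psi)}$ by $(1+|x|^2)^{r/2}(1+|y|^2)^{r/2}$ and applies Cauchy--Schwarz, so it is insensitive to the ambient dimension.

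To verify condition~(i), given $\Phi_\nu\in C^\infty(\rr^{2(n+1)}\setminus\{0\})$ satisfying \eqref{3.2}, I would argue as in the proof of Theorem~\ref{multiplier 1}: since $W^{(r,r),2}$ is a multiplication algebra and \eqref{3.2} yields $\|2^{-k\nu}\Phi_\nu(2^k\cdot)\widetilde{\Psi}\|_{W^{(\lfloor r\rfloor+1,\lfloor r\rfloor+1),2}}\lesssim 1$, the dilates $(m\Phi_\nu)_k^0(\xi,\eta)=m(2^k\xi,2^k\eta)\Phi_\nu(2^k\xi,2^k\eta)\Psi(\xi,\eta)$ have $W^{(r,r),2}$ norm bounded uniformly in $k$ by $CA_{m_\nu}$. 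Hence $m\Phi_\nu$ obeys \eqref{smooth} in $\rr^{n+1}$ with exponent $\nu=0$, and Theorem~\ref{multiplier0} gives $\T_{m\Phi_\nu}\colon L^{p_1}L^{q_1}\times L^{p_2}L^{q_2}\to L^pL^q$ with norm $\lesssim A_{m_\nu}$. This is the one place where the genuinely new mixed-norm Hardy-space estimate \eqref{aim10} is actually used, since Theorem~\ref{multiplier0} invokes it to cover the cases $p\le 1$ or $q\le 1$; in that sense this is the substantive step, although all of that work is already packaged into Theorem~\ref{multiplier0}.

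For condition~(ii), fix $\{\Phi_\nu^k\}\in\M_\nu(\rr^{n+1})$. The estimate $\sup_k\|m_k\|_{W^{(r,r),2}}\lesssim A_{m_\nu}$ with $m_k(\xi,\eta)=m(2^k\xi,2^k\eta)\Phi_\nu^k(2^k\xi,2^k\eta)$ is obtained exactly as in Theorem~\ref{multiplier 1} (the decomposition over $-k_0\le j\le k_0+1$ and the multiplication-algebra bounds are dimension-free), and then Lemma~\ref{lem: maximal}, applied in $\rr^{n+1}$, produces the pointwise bound
$$
|\T_{m\Phi_\nu^k}(f_k,g_k)(t,x)|\lesssim A_{m_\nu}\,(M(|f_k|^l))^{1/l}(t,x)\,(M(|g_k|^l))^{1/l}(t,x)
$$
valid for any fixed $l\in\big(\tfrac{n+1}{r},\min(2,p_1,p_2,q_1,q_2)\big)$, an interval that is nonempty precisely because $p_i,q_i>\tfrac{n+1}{r}$. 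I would then sum in $k$, apply the Cauchy--Schwarz inequality in the $\ell^2$ index, H\"older's inequality in mixed Lebesgue spaces splitting $\tfrac1p=\tfrac1{p_1}+\tfrac1{p_2}$ and $\tfrac1q=\tfrac1{q_1}+\tfrac1{q_2}$, and finally the mixed-norm Fefferman--Stein inequality of Lemma~\ref{maximalpq} with the triple of exponents $p_i/l,\,q_i/l,\,2/l$, each strictly greater than $1$ by the choice of $l$, to arrive at
$$
\Big\|\sum_{k\in\zz}|\T_{m\Phi_\nu^k}(f_k,g_k)|\Big\|_{L^pL^q}\lesssim A_{m_\nu}\,\|\{f_k\}\|_{L^{p_1}L^{q_1}(\ell^2)}\,\|\{g_k\}\|_{L^{p_2}L^{q_2}(\ell^2)},
$$
which is condition~(ii) with $C_2\lesssim A_{m_\nu}$.

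With \eqref{size}, (i) and (ii) verified and $C_1,C_2\lesssim A_{m_\nu}$, Theorem~\ref{main2} applies directly and yields the claimed inequality for all $s\in 2\nn$ or $s>\max(0,\tfrac{n+1}{p}-n-1,\tfrac{n+1}{q}-n-1)$. The hard part has therefore been isolated upstream: there is no new difficulty specific to Theorem~\ref{multiplier 2} itself, and the only delicate bookkeeping is the choice of $l$ together with the verification that the exponents fed into Lemma~\ref{maximalpq} all exceed $1$, which is exactly what forces the standing hypothesis $p_i,q_i>(n+1)/r$. The genuine analytic novelty compared with the Lebesgue-space theorem lies entirely in the ranges $p\le1$ or $q\le1$ and is absorbed into Theorem~\ref{multiplier0}, hence ultimately into the Appendix estimate \eqref{aim10}.
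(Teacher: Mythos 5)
Your proposal is correct and follows essentially the same route as the paper: verify the size condition and hypotheses (i)--(ii) of Theorem~\ref{main2} by repeating the $\R^n$ arguments from Theorem~\ref{multiplier 1}, invoke Theorem~\ref{multiplier0} for condition (i), and use the pointwise maximal-function bound together with Lemma~\ref{maximalpq} for condition (ii). Your bookkeeping of the exponent $l\in(\tfrac{n+1}{r},\min(2,p_1,p_2,q_1,q_2))$ and of where the Appendix estimate \eqref{aim10} actually enters (inside Theorem~\ref{multiplier0}) matches the paper's intent precisely.
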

\begin{proof}
There is really not much to prove. We can verify that $m$ satisfies the hypothesis of Theorem \ref{main2} repeating the arguments in the proof of Theorem \ref{multiplier 1}.  That  $\T_{m\Phi_\nu}$ is bounded follows now from Theorem \ref{multiplier0}, given condition {\it (i)} in Theorem \ref{main2}.
To verify condition {\it (ii)} in such theorem, we note that the pointwise estimate \eqref{3.8} still holds in $\R^{n+1}$ and now reads
\begin{equation*}\label{3.81}
\T_{m\Phi_\nu^k}(f,g)(t,x)
 \lesssim \sup_{j\in\zz}\|m_j^\nu \|_{W^{(r,r),2}} (M(|f|^l))^{1/l}(t,x)(M(|g|^l))^{1/l}(t,x)
\end{equation*}
for $l\in (n/r,\min\{2,p_1,p_2,q_1,q_2\})$.
The simple observation that
$$\| |f|^l\|_{L^{p}L^{q}} = \| |f|\|^{1/l}_{L^{p/l}L^{q/l}}.$$
and the rest of the arguments used before, invoking again Lemma \ref{maximalpq}  instead of the Fefferman-Stein inequality, gives the desired result. \end{proof}

The following corollaries are immediate now.

\begin{corollary}
Let $1<p_1,p_2, q_1,q_2<\infty$, $1/p=1/p_1+1/p_2$, and $1/q=1/q_1+1/q_2$. Then, for  $s\in 2\nn$ or $s>\max(0,\frac{n}{p}-n,\frac{n}{q}-n)$ and all $f,g\in \mathcal{S}(\rr^{n+1})$,
$$
  \| D^s(f\cdot g)\|_{{L^{p}L^{q}} } \lesssim \|D^s f\|_{L^{p_1}L^{q_1}} \|g\|_{L^{p_2}L^{q_2}}+\|f\|_{L^{p_1}L^{q_1}}\|D^s g\|_{L^{p_2}L^{q_2}}.
$$
\end{corollary}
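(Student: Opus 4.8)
The plan is to read this off from Theorem \ref{multiplier 2} applied to the trivial symbol $m\equiv 1$ with $\nu=0$. With the Fourier convention fixed in Section 2, the bilinear multiplier attached to $m\equiv 1$ factors as pointwise multiplication,
$$
\T_{1}(f,g)(t,x)=\int_{\rr^{2(n+1)}}e^{i(\xi+\eta)(t,x)}\w f(\xi)\w g(\eta)\,d\xi\,d\eta = c\,f(t,x)\,g(t,x)
$$
for a fixed constant $c$ depending only on $n$, so that $\|D^s\T_{1}(f,g)\|_{L^{p}L^{q}}=|c|\,\|D^s(f\cdot g)\|_{L^{p}L^{q}}$. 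Hence it suffices to verify that $m\equiv 1$ meets the hypotheses of Theorem \ref{multiplier 2} over the entire range $1<p_1,p_2,q_1,q_2<\infty$.

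First I would take the endpoint value $r=n+1$ permitted in Theorem \ref{multiplier 2}. Then the admissibility requirement $p_i,q_i\in(\frac{n+1}{r},\infty)$ becomes precisely $1<p_i,q_i<\infty$, which is exactly the hypothesis of the corollary, while the relations $1/p=1/p_1+1/p_2$ and $1/q=1/q_1+1/q_2$ are assumed. Next I would check the regularity condition \eqref{smooth} with $\nu=0$: for $m\equiv 1$ one has $m_k^{0}(\xi,\eta)=m(2^k\xi,2^k\eta)\Psi(\xi,\eta)=\Psi(\xi,\eta)$ for every $k\in\zz$, where $\Psi\in\mathcal{S}(\rr^{2(n+1)})$ is the fixed bump of \eqref{approx1}, supported away from the origin. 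Since $\Psi$ is Schwartz with compact support, $\|m_k^{0}\|_{W^{(n+1,n+1),2}(\rr^{2(n+1)})}=\|\Psi\|_{W^{(n+1,n+1),2}(\rr^{2(n+1)})}<\infty$ is independent of $k$, so $\sup_{k\in\zz}\|m_k^{0}\|_{W^{(r,r),2}}<\infty$.

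With these two observations in hand, Theorem \ref{multiplier 2} applied to $m\equiv 1$, $\nu=0$, $r=n+1$ yields, for the indicated values of $s$,
$$
\|D^s(f\cdot g)\|_{L^{p}L^{q}}\lesssim\|\Psi\|_{W^{(n+1,n+1),2}}\Big(\|D^sf\|_{L^{p_1}L^{q_1}}\|g\|_{L^{p_2}L^{q_2}}+\|f\|_{L^{p_1}L^{q_1}}\|D^sg\|_{L^{p_2}L^{q_2}}\Big),
$$
and absorbing the fixed quantity $\|\Psi\|_{W^{(n+1,n+1),2}}$ into the implicit constant gives the claim. I do not expect any genuine obstacle here: all the analytic content is carried by Theorem \ref{multiplier 2} (and through it by Theorems \ref{main2} and \ref{multiplier0}), and the only things to confirm are the identification $\T_{1}(f,g)=c\,fg$ — a bookkeeping matter with the Fourier normalization — and the elementary fact that a fixed Schwartz bump lies in every product Sobolev space $W^{(r,r),2}$, which is what legitimizes the endpoint choice $r=n+1$ and thereby covers the full range $1<p_i,q_i<\infty$.
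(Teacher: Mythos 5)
Your proof is correct and is precisely the paper's intended argument: the corollary is stated there as an immediate consequence of Theorem \ref{multiplier 2} applied to $m\equiv 1$, $\nu=0$, with the endpoint $r=n+1$ giving the full range $1<p_i,q_i<\infty$, exactly as you do. One small remark: the range of $s$ your argument actually delivers is $s>\max(0,\tfrac{n+1}{p}-(n+1),\tfrac{n+1}{q}-(n+1))$ as in Theorem \ref{multiplier 2} (the functions live on $\rr^{n+1}$); the corollary's statement with $n$ in place of $n+1$ appears to be a typographical slip in the paper, not a defect of your proof.
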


\begin{corollary}
Let $0<\nu<2n$ and let $1<p_1,p_2, q_1,q_2<\infty$, $1/p=1/p_1+1/p_2$, and $1/q=1/q_1+1/q_2$.  Then for  for  $s\in 2\nn$ or $s>\max(0,\frac{n}{p}-n,\frac{n}{q}-n)$ and all $f,g\in \mathcal{S}(\rr^{n+1})$,
$$
  \|D^s \I_\nu(f,g)\|_{L^{p}L^{q}} \lesssim \| D^{s-\nu}f\|_{L^{p_1}L^{q_1}}\|g\|_{L^{p_2}L^{q_2}}+\|f\|_{L^{p_1}L^{q_1}}\|D^{s-\nu}g\|_{L^{p_2}L^{q_2}}.
$$
\end{corollary}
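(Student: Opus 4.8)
The plan is to read off this corollary from Theorem~\ref{multiplier 2} applied to the Fourier multiplier of $\I_\nu$. Recall from the introduction that, once the normalizing constant $C_\nu$ is fixed, the operator $\I_\nu$ acting on $\S(\rr^{n+1})$ coincides with the bilinear multiplier $\T_{m}$ whose symbol is
$$
m(\xi,\eta)=\big(|\xi|^2+|\eta|^2\big)^{-\nu/2},\qquad (\xi,\eta)\in\rr^{2(n+1)}\setminus\{0\}.
$$
So the corollary is just the instance of Theorem~\ref{multiplier 2} for this particular $m$, and the only thing that needs checking is that $m$ satisfies the limited-regularity hypothesis \eqref{smooth} in $\rr^{n+1}$ for an admissible $r$.

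To verify \eqref{smooth}, fix $\Psi$ as in \eqref{approx1} on $\rr^{2(n+1)}$. Since $m$ is homogeneous of degree $-\nu$, we have $2^{k\nu}m(2^k\xi,2^k\eta)=m(\xi,\eta)$ for every $k\in\zz$, hence
$$
m_k^\nu(\xi,\eta)=2^{k\nu}m(2^k\xi,2^k\eta)\,\Psi(\xi,\eta)=m(\xi,\eta)\,\Psi(\xi,\eta),
$$
which is \emph{independent of} $k$. On $\supp\Psi\subset\{1/2\le|(\xi,\eta)|\le 2\}$ the function $m$ is $C^\infty$ with all derivatives bounded (the only singularity, at the origin, is excluded), so $m\Psi\in C_c^\infty(\rr^{2(n+1)})$, its Fourier transform is Schwartz, and the weighted $L^2$ integral defining $\|m\Psi\|_{W^{(r,r),2}}$ is finite for every $r>0$. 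Therefore
$$
\sup_{k\in\zz}\|m_k^\nu\|_{W^{(r,r),2}(\rr^{2(n+1)})}=\|m\Psi\|_{W^{(r,r),2}(\rr^{2(n+1)})}<\infty
$$
for every $r$; I would take $r=n+1$.

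With $r=n+1$ the admissible exponents in Theorem~\ref{multiplier 2} become $p_i,q_i\in\big(\tfrac{n+1}{r},\infty\big)=(1,\infty)$, exactly the hypothesis of the corollary, and $0<\nu<2n<2(n+1)$ lies in the allowed range $0\le\nu<2(n+1)$. Applying Theorem~\ref{multiplier 2} to $m$ then gives
$$
\|D^s\I_\nu(f,g)\|_{L^pL^q}\lesssim\|m\Psi\|_{W^{(n+1,n+1),2}}\big(\|D^{s-\nu}f\|_{L^{p_1}L^{q_1}}\|g\|_{L^{p_2}L^{q_2}}+\|f\|_{L^{p_1}L^{q_1}}\|D^{s-\nu}g\|_{L^{p_2}L^{q_2}}\big)
$$
for the values of $s$ supplied by that theorem, with $\|m\Psi\|_{W^{(n+1,n+1),2}}$ a finite constant depending only on $n$ and $\nu$; this is the claimed estimate. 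There is essentially no obstacle here: all of the analytic content sits in Theorems~\ref{main2}, \ref{multiplier0} and \ref{multiplier 2}, and the only point specific to $\I_\nu$ is the trivial observation that its symbol, being smooth away from the origin and homogeneous of degree $-\nu$, automatically meets the Sobolev-regularity requirement \eqref{smooth}.
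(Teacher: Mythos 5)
Your proposal is correct and is exactly the route the paper intends: the paper derives this corollary as an ``immediate'' consequence of Theorem~\ref{multiplier 2}, and the only content is your observation that the symbol $(|\xi|^2+|\eta|^2)^{-\nu/2}$ of $\I_\nu$ on $\R^{n+1}$ is homogeneous of degree $-\nu$ and smooth away from the origin, so that $m_k^\nu=m\Psi$ is independent of $k$ and lies in $W^{(r,r),2}$ for every $r$, with $r=n+1$ giving the full range $1<p_i,q_i<\infty$. Your care in quoting the $s$-range directly from Theorem~\ref{multiplier 2} (with $n+1$ in place of $n$) is in fact more accurate than the corollary as printed.
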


\begin{remark}
{\rm We note that the improvement property of $\I_\nu$ in the mixed Lebesgue scale (without derivatives) is trivial. In fact,
$$
  \| \I_\nu(f,g)\|_{L^{r}L^{s}} \lesssim \| f\|_{L^{p_1}L^{q_1}}\|g\|_{L^{p_2}L^{q_2}},
$$
for all $1<p_1,p_2,q_1,q_2<\infty$ satisfying $1/r= 1/p_1+1/p_2 - \nu/(n+1)>0$ and $1/s= 1/q_1+1/q_2 - \nu/(n+1)>0$.
This simply follows from
$$
\I_\nu(f,g)(t,x) \leq I_{\nu/2}|f|(t,x) I_{\nu/2}|g|(t,x),
$$
H\"older's inequality in mixed Lebesgue spaces, and the fact that
$$
I_{\nu/2}: L^{p_0}L^{q_0} \to L^{p_1}L^{q_1}
$$
if   $1/p_0 - 1/p_1 = 1/q_0 -1/q_1 = \nu/2(n+1)$ (see for example Bendek-Panzone \cite{BP} or Moen \cite{Mo2}).}
\end{remark}

\begin{remark}
{\rm Both in the results in this section and the previous one, we could have used conditions of the form
$\sup_{k\in \zz}\|m_k^\nu\|_{W^{(r_1,r_2),2}}<\infty$
for appropriate $r_1 \neq r_2$. We decided to use just $r_1=r_2$ to simplify the presentation. Likewise, we could have used  different  exponents $1<p_3,p_4<\infty$ with $1/p_3+1/p_4=1/p$ (and similarly with $q$) for the second terms on the right-hand side of the estimates in each theorem proved.}
\end{remark}

\begin{remark}
{\rm Under the stronger pointwise smoothness assumption \eqref{verysmooth}, it is possible also to obtain weak-type estimates in the results if $p_1=1$ or $p_2=1$ in the Lebesgue case and also in the outside norm in the mixed Lebesgue case (though still requiring  $q_1, q_2>1$). We refer the interested reader to \cite{GO} to see how the arguments there could be adapted to the ones presented here.}
\end{remark}


\section{Appendix}

The purpose of this Appendix is to provide a proof of the following result which we have used in Section 4.

\begin{theorem}\label{h to l:f}
Let $0<q,p<\infty$. If $f\in H^{p,q}({\R}^{n+1})\cap
L^2({\R}^{n+1})$, then $f\in L^{p}L^{q}({\R}^{n+1})$
and there is a constant $C_{p,q}>0$ independent of the $L^2({\R}^{n+1})$ norm of $f$ such that
\begin{align}\label{aim11}
\|f\|_{L^{p}L^q({\R}^{n+1}) }\le C_{p,q}\|f\|_{H^{p,q}({\R}^{n+1})}.
\end{align}
\end{theorem}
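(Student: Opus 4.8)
The plan is to reduce to the genuinely new range $\min(p,q)\le 1$ — when $1<p,q<\infty$ the estimate is just \eqref{twpq} — and to control $f$ by a maximal function. Since $|f(t,x)|\le \mathcal Mf(t,x)$ almost everywhere for the (grand or non-tangential) maximal function of any $f\in L^1_{\mathrm{loc}}$, associated to a bump with integral one, it is enough to prove the mixed-norm Fefferman--Stein inequality
\[
\|\mathcal Mf\|_{L^pL^q(\rr^{n+1})}\lesssim \Big\|\Big(\sum_{k\in\Z}|\Delta_kf|^2\Big)^{1/2}\Big\|_{L^pL^q(\rr^{n+1})}
\]
for $f\in L^2$, which immediately yields \eqref{aim11}.

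To establish this I would fix a Littlewood--Paley function $\psi\in\S_0(\rr^{n+1})$ normalized so that the Calderón reproducing formula $f=c_\psi^{-1}\int_0^\infty \psi_s*\psi_s*f\,\tfrac{ds}{s}$ holds in $L^2$ (legitimate since $f\in L^2$, and with no polynomial ambiguity), and pass to a tent-space formulation: the function $F(y,t)=\psi_t*f(y)$ lives in a mixed-norm tent space $T^{p,q}_2$ whose (quasi-)norm is the $L^pL^q$ norm of the Lusin area function $\mathcal Af$. Two inputs are then needed. First, $\|\mathcal Af\|_{L^pL^q}\approx\|Sf\|_{L^pL^q}$; the nontrivial ($\min(p,q)\le1$) direction follows by combining the Peetre maximal estimate $\sup_{|z-y|\lesssim s}|\psi_s*f(z)|\lesssim M_\theta(\psi_s*f)(x)$ (for a fixed $0<\theta<\min(p,q)$, with $N$ large in $\psi$) with Lemma \ref{maximalpq} applied to the rescaled exponents $p/\theta,\,q/\theta,\,2/\theta>1$ — this $M_\theta$ device is the substitute for a direct use of the mixed Fefferman--Stein inequality when the exponents are small. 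Second, one develops an atomic decomposition $F=\sum_j\lambda_j A_j$ of $T^{p,q}_2$, with $A_j$ supported in the tent over a cube $Q_j\subset\rr^{n+1}$ and suitably normalized. Applying the synthesis map $\pi_\psi(G)=\int_0^\infty\psi_t*G(\cdot,t)\,\tfrac{dt}{t}$ recovers $f=\pi_\psi F=\sum_j\lambda_j\,\pi_\psi(A_j)$, where each $\pi_\psi(A_j)$ is a molecule adapted to $Q_j$ (with enough vanishing moments, the number dictated by $\lfloor(n+1)(1/\min(p,q)-1)\rfloor$) normalized so that $\|\mathcal M(\pi_\psi A_j)\|_{L^pL^q}\lesssim 1$. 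Using the sublinearity of $\mathcal M$ and the $\underline s$-triangle inequality in $L^pL^q$ (with $\underline s=\min(p,q,1)$), the proof then reduces to showing that the coefficients aggregate correctly.

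The crux — and the reason the mixed-norm case is genuinely harder than the classical $p=q$ one — is precisely this aggregation step: controlling the coefficients produced by the stopping-time decomposition on the level sets $\Omega_\ell=\{\mathcal Af>2^\ell\}$. When $p=q$ this collapses to the one-line layer-cake identity $\sum_\ell 2^{\ell p}|\Omega_\ell|\approx\|\mathcal Af\|_{L^p}^p$, but for $p\neq q$ the quantity $\|\mathbf 1_E\|_{L^pL^q}$ is not a power of $|E|$ (and it does not split as $|E|^{1/2}\|\mathbf 1_E\|_{L^rL^s}$ with $1/p=1/2+1/r$, $1/q=1/2+1/s$), so the naive bound of $\|f\|_{L^pL^q}$ by $\sum_\ell 2^{\ell}\|\mathbf 1_{\Omega_\ell}\|_{L^pL^q}$ — or by $\|\mathcal Af\|_{L^2}$ times a mixed norm of a support — overcounts by an unbounded factor. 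The remedy is to keep the square-function mass attached to the individual Whitney cubes of the $\Omega_\ell$, exploit that the $\Omega_\ell$ are nested, and sum generation by generation, invoking Lemma \ref{maximalpq} once more to absorb the finitely overlapping cube indicators collectively rather than one at a time. This is also why the familiar proofs of Fefferman--Stein via duality or via a good-$\lambda$ inequality do not transfer: a good-$\lambda$ estimate controls only Lebesgue measures of level sets, which is exactly the information that is insufficient in a mixed-norm space.
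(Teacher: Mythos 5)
Your overall architecture --- a square-function/atomic decomposition organized by the level sets $\Omega_\ell$ of an area or discrete square function, with the observation that the only genuinely new difficulty is the aggregation of the pieces in a mixed norm when $\min(p,q)\le 1$ --- matches the paper's (which uses the discrete Frazier--Jawerth $\varphi$-transform and a Chang--Fefferman/Han--Lu stopping time rather than continuous tent spaces; that difference is cosmetic). The problem is that your outline stops exactly where the proof has to begin. Your resolution of the crux is the sentence ``keep the square-function mass attached to the individual Whitney cubes, exploit that the $\Omega_\ell$ are nested, and sum generation by generation, invoking Lemma \ref{maximalpq} once more to absorb the finitely overlapping cube indicators collectively.'' As written this is not an argument: Lemma \ref{maximalpq} requires all exponents strictly between $1$ and $\infty$, and the quantity you ultimately must bound, $\bigl\|\sum_\ell 2^{\ell}(\text{piece supported in }\widetilde\Omega_\ell)\bigr\|_{L^pL^q}$ with $q\le 1$, does not reduce to a vector-valued maximal inequality by nesting alone --- precisely because, as you yourself note, $\|\chi_{\widetilde\Omega_\ell}\|_{L^pL^q}$ is not a power of $|\widetilde\Omega_\ell|$ and the sets $\widetilde\Omega_\ell$ are general open subsets of $\R^{n+1}$, not products.

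The mechanism your plan is missing is the following. For $0<q\le 1$ the paper chooses auxiliary exponents $\tilde p,\tilde q\ge 2$ with $\tilde p/p=\tilde q/q$, sets $1/r=1/q-1/\tilde q$, writes $f=\sum_i 2^{i(1-q/\tilde q)}\chi_{\widetilde\Omega_i}a_i$ with renormalized blocks $a_i$, and applies a two-parameter H\"older inequality
$$\|FG\|_{L^pL^q}\le\Bigl[\int\Bigl(\int|F|^{r}\Bigr)^{p/q}\Bigr]^{1/p-1/\tilde p}\|G\|_{L^{\tilde p}L^{\tilde q}},\qquad F=\Bigl(\sum_i2^{iq}\chi_{\widetilde\Omega_i}\Bigr)^{1/r},\quad G=\Bigl(\sum_i|a_i|^2\Bigr)^{1/2}.$$
The first factor is controlled by $\|f\|_{H^{p,q}}^{1-p/\tilde p}$ using the nesting together with $\chi_{\widetilde\Omega_i}\lesssim M(\chi_{\Omega_i})^{u}$ for $u>\max(q/p,1)$ --- this is where Lemma \ref{maximalpq} legitimately enters, at the rescaled exponents $up/q,\,u>1$; the second factor is handled by duality in $L^{\tilde p}L^{\tilde q}(\ell^2)$, available because $\tilde p,\tilde q\ge 2$, together with a vector-valued Littlewood--Paley estimate, and yields $\|f\|_{H^{p,q}}^{p/\tilde p}$. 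It is the interplay between the renormalization exponent $1-q/\tilde q$, the constraint $\tilde p/p=\tilde q/q$, and the duality step that makes the generations sum; none of this is implied by, or recoverable from, the plan as you state it. (The case $0<p\le1<q$ is genuinely easier and closer to your sketch, since there the support localization is needed only in the outer variable $t$ and plain H\"older plus $L^q$-duality suffices.) So the proposal identifies the right obstruction but does not overcome it.
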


\begin{remark}
{\rm In the applications in Section 4, the multipliers $T_m(f,g)$ are known to be in $L^2$ whenever $f,g\in \S(\rr^{n+1})$,
so Theorem \ref{h to l:f} is always applicable for our arguments in section 4}.
\end{remark}

To prove Theorem \ref{h to l:f}, we need first to adapt some arguments from the works by Frazier-Jawerth \cite{FJ1, FJ2}. The techniques therein for Besov and Triebel-Lizorkin spaces are very powerful and versatile, and they are adaptable to many other situations. In particular, in the works of Han-Lu \cite{HL}, Ding et al \cite{DHLW}, and others, they are adapted to multiparameter Hardy spaces. More recently, and relevant to our needs, some of the same decomposition techniques have been extended to spaces based on mixed norms.  For $L^pL^q$ spaces with $p,q>1$, this was carried out in \cite{TW} while for general Triebel-Lizorkin spaces based on $L^pL^q$ it was done in the work \cite{GJN} already mentioned in Section~2. To avoid too much repetition with  this already existing literature and for the sake of brevity, we will only summarize here some decomposition results in our context. The arguments to establish them, though lengthy, are to some extent routine, or at least expected for those familiar with the Frazier-Jawerth machinery.  In addition, most of them are explicitly performed in the already cited works. In particular, \cite{GJN} conducts a meticulous analysis recasting many of the needed tools. It is important to point out that the crux of the so-called {\it $\varphi$-transform decompositions} of Frazier-Jawerth is to rely on pointwise estimates involving almost orthogonality properties, the Peetre maximal functions, and  the Hardy-Littlewood maximal functions, which hold in any number of dimensions and hence for functions or distributions defined now in $\R^{n+1}$. They are then put together through vector valued estimates involving the Fefferman-Stein result. In the case of mixed Lebesgue space, Lemma~\ref{maximalpq} plays the corresponding role. Given a particular discrete decomposition stated below we do provide a full proof of Thereom~\ref{h to l:f}.  Though our arguments are borrowed in part from the ones in \cite{HL} and \cite{DHLW}, some of which can in turn be traced back in the multiparameter setting to the works of Chang-Fefferman \cite{CF1,CF2}, we face some new technical issues because of the mixed norms. We focus on addressing such issues in detail.

For the a Littlewood-Paley function $\psi$ chosen so that
$$\sum_{j \in\zz}|\widehat \psi(2^{-j} \xi)|^2 =1,  \text { for all } \xi\neq 0 \text{ in }\R^{n+1},$$
Frazier-Jawerth \cite{FJ1,FJ2} showed through a version of the sampling theorem that
\begin{equation}\label{phitransform}
f(t,x)=\sum_{j\in \zz} \, \sum_{\ell(Q)=2^{-j}}  |Q| \, \psi_{2^{-j}}*f(t_Q,x_Q)\, \psi_{2^{-j}}(t-t_Q,x-x_Q) ,
\end{equation}
where for each dyadic cube $Q$ in  $\R^{n+1}$ with side length
$\ell(Q)= 2^{-j}$, $$(t_Q,x_Q)= (2^{-j}k_0,2^{-j}k_n),$$  with $k_0 \in \zz$ and  $k_n \in \zz^n$, is its lower left corner. 
Also, using the notation $$\psi_Q(x,t)= |Q|^{1/2}\psi_{2^{-j}}(t-t_Q,x-x_Q),$$ the reproducing formula \eqref{phitransform} takes the more wavelet-looking form
\begin{equation}\label{normalizedphitransform}
f(t,x)=\sum_Q\<f,\psi_Q\>\psi_Q(t,x),
\end{equation}
 where the sum runs over all dyadic cubes in $\R^{n+1}$. It is known that this wavelet-type decomposition can be used to give discrete characterizations of all function spaces admitting Littlewood-Paley decomposition. Since we have defined $H^{p,q}$ via a Littlewood-Paley square function (quasi-)norm, it is natural that one  also has
\begin{equation*}
\|f\|_{H^{p,q}({\R}^{n+1})}\approx
\left\|\left(\sum_{j\in\zz}\sum_{\ell(Q)=2^{-j}}|\psi_{2^{-j}}*f(t_Q,x_Q)|^2\chi_Q(t,x)\right)^{1/2}\right\|_{L^pL^q(\R^{n+1)}}
\end{equation*}
as proved in \cite{TW} for $p,q>1$ and for all exponents in \cite{GJN}. Incidentally, as also proved in \cite{GJN}, this characterization and related result can be used as in the case of spaces based on $L^p$ to prove  that the definition of $H^{p,q}$ does not depend on the choice of function $\psi$.

It is also possible (and sometimes convenient) to obtain a version of the discrete Calder\'on reproducing formula \eqref{phitransform} using two generating family of functions, one of which actually has compact support (and an arbitrarily large, but finite, number of vanishing moments), but of course we can no longer have the samples of the functions
$ \psi_{2^{-j}}*f$ as coefficients.  Such discrete formula is referred to as the {\it generalized $\varphi$-transform}. We state below the equivalent formulation for spaces based on mixed norms. We skip the details of the proof, but once again we refer the reader to \cite{HL, TW, GJN} for the tools to apply the Frazier-Jawerth blueprints  in the mixed-norm context.

Let $\phi\in\S({\R}^{n+1})$ be supported on $B(0,2)$ satisfy
\begin{equation*}
\int_{{{\R}}^{n+1}}\phi (t,x)t^{\alpha}x^{\beta} dx\,dt=0
\end{equation*}
for $\alpha\in\N_0$ and $\beta\in\N_0^n$ with $|\alpha|+|\beta|\le M$, where $M$ is a fixed positive integer.  (Such a function exists, and a construction is given in \cite[p. 783]{FJ1}.)  Further assume that $\widehat\phi(\xi)\geq c>0$ for $\frac{1}{2}\leq|\xi|\leq2$, and define now
$$
\sigma^Q(t,x)=|Q|^{1/2} \phi_{2^{-(j+N)}}(t-t_Q,x-x_Q),
$$
for each dyadic cube $Q\subset\R^{n+1}$ of side length $2^{-j}$ and with lower left corner $(t_Q,x_Q)$, where $N<0$ is some fixed integer.  The work of Frazier-Jawerth \cite[Theorem 4.2]{FJ2} can be modified to construct, for $|N|$ large enough, a family of functions $\tau^Q$, indexed by dyadic cubes $Q$, such that 
\begin{equation}
f(t,x)=\sum_Q\<f,\tau^Q\>\sigma^Q(t,x),\label{gphitransform}
\end{equation}
where again the sum in $Q$ is over all dyadic cubes in $\R^{n+1}$.  The convergence of the formula in \eqref{gphitransform}, like the one in \eqref{phitransform}, holds in a very general sense but certainly in $L^2(\R^{n+1})$.  Moreover,
\begin{equation}
\|f\|_{H^{p,q}(\R^{n+1})}\approx\|\widetilde g(f)\|_{L^pL^q},\label{LittlewoodPaley}
\end{equation}
where to simplify notation we define $\widetilde g(f)$ to be the discrete Littlewood-Paley square function given by
$$
\widetilde{g}(f)(t,x):=\(\sum_{Q}(|Q|^{-1/2}|\<f,\tau^Q\>|)^2\chi_Q(t,x)\)^{1/2}.
$$

For clarification purposes, we point out that the numbers $M$ and $|N|$ which are needed to be taken sufficiently large based on $p$, $q$ and the dimension, will play no role in our arguments.  We also note that while the functions $\sigma^Q$ are the translations and dilations of a single function with compact support, the functions $\tau^Q$ are not.  We will not need to know  the functions $\tau^Q$ explicitly for our proof. See again \cite{FJ2} for their properties.

One last technical detail that will be convenient to us (see the proof of Theorem \ref{h to l:f} below) is to assume that  the function $\phi$ used in \eqref{phitransform} is of the form $\phi=\varphi*\varphi$ where $\varphi$ is a real-valued, radial function $\varphi\in\S(\R^{n+1})$, also satisfies the vanishing moments conditions and is supported on $B(0,1)$.  This can be easily done by following the construction in \cite{FJ1} mentioned earlier.

With these technical issues and preliminary facts about $H^{p,q}(\R^{n+1})$, we can prove now Theorem~\ref{h to l:f}.
\begin{proof}
The case $p>1$ and $q>1$ is known (cf. \cite{TW}).
We then consider two cases separately.

\noindent\textbf{Case 1: } $0<q\le 1$ and $0<p<\infty$.

Let $f\in L^2({\R}^{n+1})\cap H^{p,q}({\R}^{n+1})$. For every $i\in\zz,$ set
$$\Omega_i=\{(t,x)\in {\R}^{n+1}: \widetilde{g}(f)(t,x)>2^i\}$$ and
$$
B_i=\{(j,Q): j\in{\zz}, Q\in\Q_{j}, |Q\cap \Omega_i|>(1/2)|Q|, |Q\cap
\Omega_{i+1}|\le (1/2)|Q|\},
$$
where $\Q_j$ is the collection of all dyadic cubes with side length $2^{-j}$.  For $\ f\in L^2({\R}^{n+1})\cap H^{p,q}({\R}^{n+1})$, we rewrite \eqref{gphitransform} as
\begin{equation}\label{C1}
f=\sum_{j\in\Z}\sum_{Q\in\Q_j}|Q|^{1/2}\<f,\tau^Q\>\phi_{2^{-(j+N)}}(\cdot-t_Q,\cdot-x_Q)
\end{equation}
Select $\tilde p, \tilde q\ge 2$ such that
$$
\frac{\tilde p}{p}=\frac{\tilde q}{q}.
$$
Let $1/r=1/q-1/\q$
and let $s$ be the dual index of $r$ if $r>1$ and $s=2$ otherwise. Note that $s\ge 2$ since if $r>1$,
$1/r=1/q-1/\q\ge 1-1/2=1/2.$

Define
$$
a_i=2^{-i(1-\frac{q}{\widetilde{q}})}\sum_{(j,Q)\in B_i}|Q|^{1/2}\<f,\tau^Q\>\phi_{2^{-(j+N)}}(\cdot-t_Q,\cdot-x_Q).$$
We note that for any $i\in\Z$, $a_i$ is supported in
$$
\widetilde{\Omega}_i:=\{(t,x):{{M}}(\chi_{\Omega_i})(t,x)>2^{-(1+(n+1)(3-N))}\}.
$$
In fact,  $\phi_{2^{-(j+N)}}(t-t_Q,x-x_Q)$ is supported in $2^{3-N}Q$,  and $|Q\cap \Omega_i|>|Q|/2$ for $(j,Q)\in B_i$. Hence for
$$(t,x)\in \supp(\phi_{2^{-(j+N)}}(\cdot-t_Q,\cdot-x_Q))\subset 2^{3-N}Q,$$
we have
\begin{align*}
M(\chi_{\Omega_i})(t,x)&\geq\frac{2^{-(n+1)(3-N)}}{|Q|}\int_{2^{3-N}Q}\chi_{\Omega_i}dt\,dx\geq\frac{2^{-(n+1)(3-N)}}{|Q|}\int_{Q}\chi_{\Omega_i}dt\,dx\\
&=2^{-(n+1)(3-N)}\frac{|Q\cap \Omega_i|}{|Q|}>2^{-(1+(n+1)(3-N))}.
\end{align*}
Thus, by \eqref{C1},
$$f=\sum_{i} 2^{i(1-q/\q)}\chi_{\widetilde{\Omega}_i} a_i. $$
Since $s\geq 2$, it follows that
\begin{align*}
\sum_ia_ib_i & \le \left(\sum_i|b_i|^r\right)^{1/r}\left(\sum_i|a_i|^s\right)^{1/s}\\
& \le \left(\sum_i|b_i|^r\right)^{1/r}\left(\sum_i|a_i|^2\right)^{1/2}.
\end{align*}
Therefore
\begin{align}\label{eq 66662}
\begin{split}
\|f\|_{\lpq}&=\Big\|\sum_{i} 2^{i(1-q/\q)}\chi_{\widetilde{\Omega}_i} a_i\Big\|_{\lpq({\R}^{n+m}) }\\
& \le \left\|\left(\sum_i 2^{iq}\chi_{\widetilde{\Omega}_i}\right)^{1/r} \left(\sum_i|a_i|^2\right)^{1/2}\right\|_{\lpq}.
\end{split}
\end{align}
Also, since  $1/q=1/r+1/\q$,  we have  $$1/q =p/(q\p) + 1/r$$  or  $$1/p= q/(rp )+ 1/\p.$$ Hence, by H\"{o}lder's inequality,
\begin{equation*}
  \|FG\|_{L^pL^q}\le \left[\int \left(\int|F|^{r}\right)^{p/q}\right]^{1/p-1/\p}\|G\|_{L^{\p}L^{\q}}.
\end{equation*}
Applying this with $F=(\sum_i 2^{iq}\chi_{\widetilde{\Omega}_i})^{1/r}$ and $G=(\sum_i|a_i|^2 )^{1/2}$, we obtain from \eqref{eq 66662}
\begin{align*}
\|f\|_{\lpq}
&\lesssim \left(\int_{\rr}\left(\int_{\rr^n}\sum_i 2^{iq}\chi_{\widetilde{\Omega}_i} dx\right)^{\frac{p}{q}}dt\right)^{1/p-1/\tilde{p}}\Big\| \left(\sum_i|a_i|^2 \right)^{1/2}\Big\|_{L^{\tilde p}L^{\tilde q} }\\
& = I\times J.
\end{align*}
Let us first estimate $I.$ Pick  $u>\max (\frac{q}{p},1).$ By the definition of $\widetilde\Omega_i$ and Lemma~\ref{maximalpq}, it follows that
\begin{align*}\label{Obser2}
 &   \left(\int_{\rr}\left(\int_{\rr^n}   \sum_{i\in{\zz}} 2^{qi}\chi_{\widetilde{\Omega}_i}(t,x)dx\right)^{\frac{p}{q}}dt\right)^{1/p}  \\
 &   \quad \quad \quad \quad  \lesssim \left(\int_{\rr}\left(\int_{\rr^n}\sum_{i\in{\zz}} ( M(2^{qi/u}\chi_{\Omega_i})(t,x) )^{u}dx\right)^{\frac{p}{q}}dt\right)^{1/p} \\
   & \quad \quad \quad \quad  =\|\{M(2^{qi/u}\chi_{\Omega_i})\}\|_{L^{up/q}_tL^{u}_x(\ell^{u})}^{u/q} \\
   & \quad \quad \quad \quad  \lesssim \|\{2^{qi/u}\chi_{\Omega_i}\}\|_{L^{up/q}_tL^{u}_x(\ell^{u})}^{u/q}\\
   & \quad \quad \quad \quad \le  \left(\int_{\rr}\left(\int_{\rr^n}\sum_{i=-\infty}^{\lfloor\log_2\widetilde{g}(f)(t,x)\rfloor+1}2^{qi}dx\right)^{\frac{p}{q}}dt\right)^{1/p}\\
   & \quad \quad \quad \quad  \lesssim \left(\int_{\rr}\left(\int_{\rr^n}\widetilde{g}(f)(t,x)^{q}dx\right)^{\frac{p}{q}}dt\right)^{1/p}\\
   & \quad \quad \quad \quad  \approx \|f\|_{H^{p,q}},
\end{align*}
and consequently
$I\lesssim \|f\|_{H^{p,q}}^{1-\frac{p}{\p}}.$

Next, we  show that
\begin{equation}\label{6new}
\left\|\left (\sum_i|a_i|^2 \right )^{1/2}\right\|_{L^{\tilde p}L^{\tilde q} }\lesssim \|f\|_{H^{p,q}}^{\frac{p}{\p}}.
\end{equation}
For $\{\zeta_i\}\in L^{\tilde{p}'}\!L^{\tilde{q}'}(\ell^2)$ with
$\|\{\zeta_i\}\|_{ L^{\tilde{p}'}\!L^{\tilde{q}'}(\ell^2)}\le 1$, and denoting by  $\langle \! \langle\cdot,\cdot\rangle \!\rangle$ the pairing between $L^{\tilde p}L^{\tilde q}(\ell^2)$ and $L^{\tilde p'}L^{\tilde q'}(\ell^2)$, we get  %
\begin{align*}
& \left| \left\langle \! \! \!  \left\langle  \left\{2^{-i(1-\frac{q}{\widetilde{q}})}\sum_{(j,Q)\in B_i}|Q|^{1/2}\<f,\tau^Q\>\phi_{2^{-(j+N)}}(\cdot-t_Q,\cdot-x_Q) \right\}, \left\{ \zeta_i (\cdot,\cdot)\right\} \right \rangle \! \! \! \right\rangle\right |  \\
& = \left| \sum_i2^{-i(1-\frac{q}{\widetilde{q}})}\sum_{(j,Q)\in B_i }\int_\R \int_{\R^{n}} |Q|^{-1/2}\<f,\tau^Q\> \right. \\
&  \quad \quad \quad  \quad \quad \quad  \quad \quad \quad  \quad \quad \quad \left. \phantom{\sum_i} \phantom{\int}  \times  \phi_{2^{-(j+N)}}*\zeta_i(t_Q,x_Q)\chi_Q(t,x) \, dxdt \right |\\
&\le \left\| \left( \sum_i2^{-2i(1-\frac{q}{\widetilde{q}})}\sum_{(j,Q)\in B_i }( |Q|^{-1/2}|\<f,\tau^Q\>|)^{2}\chi_Q \right)^{\frac{1}{2}}\right\|_{L^{\tilde p}L^{\tilde q}}  \\
& \quad \quad \quad  \quad \quad \quad  \quad \quad \quad
          \times  \left\| \left( \sum_i \sum_{(j,Q)\in B_i }|\phi_{2^{-(j+N)}}*\zeta_i(t_Q,x_Q)|^{2}\chi_Q \right) ^{\frac{1}{2}}
          \right\|_{L^{\tilde p'}L^{\tilde q'}} \\
&\lesssim \left\| \left( \sum_i2^{-2i(1-\frac{q}{\widetilde{q}})}\sum_{(j,Q)\in B_i }( |Q|^{-1/2}|\<f,\tau^Q\>|)^{2}\chi_Q \right)^{\frac{1}{2}}\right\|_{L^{\tilde p}L^{\tilde q}}.
\end{align*}
The last inequality above follows by the selection of $\phi=\varphi*\varphi$, the trivial estimate
$$|\varphi_{2^{-(j+N)}}*F(t_Q,x_Q)| \chi_Q(x,t)\less M(F)(t,x) \chi_Q(x,t),$$
 Lemma~\ref{maximalpq}, and a Littlewood-Paley estimate for mixed Lebesgue spaces, as the computations below show,
\begin{align*}
& \left\| \left( \sum_i \sum_{(j,Q)\in B_i }|\phi_{2^{-(j+N)}}*\zeta_i(t_Q,x_Q)|^{2}\chi_Q \right) ^{\frac{1}{2}}
          \right\|_{L^{\tilde p'}L^{\tilde q'}}  \\
  &  \lesssim      \left\| \left( \sum_i \sum_{(j,Q)\in B_i }|M(\varphi_{2^{-(j+N)}}*\zeta_i)|^{2}\chi_Q \right) ^{\frac{1}{2}}
          \right\|_{L^{\tilde p'}L^{\tilde q'}}  \\
&    \lesssim       \left\| \left( \sum_{i} \sum_{j\in \zz} |M(\varphi_{2^{-j}}*\zeta_i)|^{2} \right) ^{\frac{1}{2}}
          \right\|_{L^{\tilde p'}L^{\tilde q'}} \\
&    \lesssim       \left\| \left( \sum_{i} \sum_{j\in \zz} |\varphi_{2^{-j}}*\zeta_i|^{2} \right) ^{\frac{1}{2}}
          \right\|_{L^{\tilde p'}L^{\tilde q'}} \\
&    \lesssim       \left\| \left( \sum_i |\zeta_i|^{2} \right) ^{\frac{1}{2}}
          \right\|_{L^{\tilde p'}L^{\tilde q'}}\\
     &    \lesssim  1     .
\end{align*}
The version of the vector valued Littlewood-Paley estimate applied above in the second to last inequality can be found for Lebesgue spaces, for example, in the book of Grafakos \cite[Theorem 5.1.4]{G}. Such version also easily extends to the mixed Lebesgue space setting by the results in \cite{BCP}.

By duality, we then have
\begin{align}\label{6.14}
& \left\|\left (\sum_i|a_i|^2 \right )^{1/2}\right\|_{L^{\tilde p}L^{\tilde q} } \nonumber\\
& \quad \quad  \lesssim \left\|\left( \sum_i2^{-2i(1-\frac{q}{\widetilde{q}})}\sum_{(j,Q)\in B_i }( |Q|^{-1/2}|\<f,\tau^Q\>|)^{2}\chi_Q\right)^{\frac{1}{2}}\right\|_{L^{\tilde p}L^{\tilde q}}.
\end{align}
For any $i\in\Z$ and $(j,Q)\in B_i$ and $(t,x)\in Q$, it also follows that $Q\subset \widetilde\Omega_i$ and that 
\begin{align*}
M(\chi_{Q\cap\widetilde\Omega_i\setminus\Omega_i})(t,x)\ge \frac{|Q\backslash \Omega_i|}{|Q|}>\frac12.
\end{align*}

Applying again Lemma~\ref{maximalpq}, the definition of $B_i$, and the above estimates  in \eqref{6.14}, we have
\begin{align*}
&\left \| \left( \sum_i2^{-2i(1-\frac{q}{\widetilde{q}})}\sum_{(j,Q)\in B_i }( |Q|^{-1/2}|\<f,\tau^Q\>|)^{2}\chi_Q \right)^{\frac{1}{2}}
\right \|_{L^{\tilde p}L^{\tilde q}}^{\tilde p}\\
&\lesssim\left \| \left( \sum_i2^{-2i(1-\frac{q}{\widetilde{q}})} \!\sum_{(j,Q)\in B_i }( |Q|^{-1/2}|\<f,\tau^Q\>|)^{2} M (\chi_{Q\cap\widetilde\Omega_i\setminus\Omega_i})^2 \right)^{\frac{1}{2}} \right \|_{L^{\tilde p}L^{\tilde q}}^{\tilde p}\\
&\lesssim\left \| \left( \sum_i2^{-2i(1-\frac{q}{\widetilde{q}})}\sum_{(j,Q)\in B_i }( |Q|^{-1/2}|\<f,\tau^Q\>|)^{2} \chi_{Q\cap\widetilde\Omega_i\setminus\Omega_i} \right)^{\frac{1}{2}}  \right \|_{L^{\tilde p}L^{\tilde q}}^{\tilde p}\\
&\lesssim \int_{\R} \left( \int_{\rr^n} \left( \sum_i 2^{-2i(1-\frac{q}{\widetilde{q}})} 2^{2i}
\chi_{\widetilde{\Omega}_i \setminus \Omega_i} (t,x)
 \phantom{ \left( \left. \sum_{(j,Q)\in B_i } |\phi_{2^{-j}} \right. dx \right. ^{\frac{\tilde{p}}{\tilde{q}}}\,dt }
 \right. \right.    \\
&\quad \quad\quad \quad \quad \quad \quad \quad    \left.  \left.
       \times \sum_{(j,Q)\in B_i } ( |Q|^{-1/2}|\<f,\tau^Q\>|)^{2} \chi_{Q}(t,x)  \right) ^{\frac{\tilde{q}}{2}}dx \right) ^{\frac{\tilde{p}}{\tilde{q}}}\,dt   \\
&\lesssim\int_{{\R}^{m}}\left(\int_{\rr^n}\left( \sum_i2^{-2i(1-\frac{q}{\widetilde{q}})}2^{2i}
\chi_{\widetilde{\Omega}_i}(t,x) \right)^{\frac{\tilde{q}}{2}}dx\right)^{\frac{\tilde{p}}{\tilde{q}}}\,dt\\
&\lesssim\int_{{\R}^{m}}\left(\int_{\rr^n}\sum_i2^{iq}
\chi_{\widetilde{\Omega}_i}(t,x)dx\right)^{\frac{p}{q}}\,dt\\
&\lesssim  \|f\|_{H^{p,q}}^{p},
\end{align*}
where we have used that
$$
\sum_{(j,Q)\in B_i } ( |Q|^{-1/2}|\<f,\tau^Q\>|)^{2} \chi_{Q}(t,x) \lesssim \tilde g(f)(t,x)^2 \lesssim 2^{2i},
$$
and the fact that $q>2$ and $p/q=\p/\q$. This gives \eqref{6new} and hence Theorem \ref{h to l:f} is verified if $0<q\leq 1$.

\bigskip

\noindent\textbf{Case 2:} $0<p\le 1<q<\infty.$

The proof for this case is simple and closer to the Lebesgue space case given in \cite{HL};
we include the details for the reader's convenience.
We set now
$$\Omega'_i=\{t\in {\R}: \|\widetilde{g}(f)(t,\cdot)\|_{L^q_x(\rr^n)}>2^i\}$$
and
$$
B'_i=\{(j,Q): j\in{\zz}, Q\in\Q_{j}, |Q'\cap \Omega_i'|>(1/2)|Q'|, |Q'\cap
\Omega_{i+1}'|\le (1/2)|Q'|\},
$$
where
$Q=Q' \times Q''\subset \R \times  \rr^n$.

We want to show first that
\begin{equation}\label{claim3}
 \left \|\sum_{(j,Q)\in B_i'}|Q|^{1/2}\<f,\tau^Q\>\phi_{2^{-(j+N)}}(\cdot-t_Q,\cdot-x_Q) \right \|_{L^pL^q}^p\lesssim 2^{ip}|\Omega_i'|.
\end{equation}

Let $M_1$ denote the maximal function on $\R$. We note that if $(j,Q)\in B_i'$, then $\phi_{2^{-(j+N)}}(t-t_Q, x-x_Q)$, as a function of $t$, is supported in
$$
\widetilde{\Omega}_i':=\{t:{{M}_1}(\chi_{\Omega_i'})(t)>2^{-(4-N)}\}
$$
uniformly in $x\in\R^n$.  That is,
$$
\bigcup_{x\in\R^n}\supp(\phi_{2^{-(j+N)}}(\cdot-t_Q,x-x_Q))\subset\widetilde{\Omega}_i',
$$
and so 
$$\left \|\sum_{(j,Q)\in B_i'}|Q|^{1/2}\<f,\tau^Q\>\phi_{2^{-(j+N)}}(t-t_Q,\cdot-x_Q)\right \|_{L^q_x}$$
is supported in $\widetilde \Omega_i$ as a function of $t$.

By H\"{o}lder's inequality,
\begin{align}\label{6.12}
&\left \| \sum_{(j,Q)\in B_i'}|Q|^{1/2}\<f,\tau^Q\>\phi_{2^{-(j+N)}}(\cdot-t_Q,\cdot-x_Q)\right \|_{\lpq (\R^{n+1})} \nonumber\\
& \lesssim|\widetilde{\Omega}_i'|^{1/p-1/q} \left\| \sum_{(j,Q)\in B_i'}|Q|^{1/2}\<f,\tau^Q\>\phi_{2^{-(j+N)}}(\cdot-t_Q,\cdot-x_Q)\right\|_{L^{q}(\R^{n+1})}.
\end{align}
For $\zeta\in L^{q'}(\R^{n+1})$ with $\|\zeta\|_{ L^{q'}}\le 1$ and the usual duality pairing for $L^q$ spaces in $\R^{n+1}$, we have
\begin{align}\label{6.13}
& \left|\Big\langle\sum_{(j,Q)\in B_i}|Q|^{1/2}\<f,\tau^Q\>\phi_{2^{-(j+N)}}(\cdot-t_Q,\cdot-x_Q),\zeta\Big\rangle\right|  \nonumber\\
& =     \left|\sum_{(j,Q)\in B_i' }\int_{\R^{n+1}}|Q|^{-1/2}\<f,\tau^Q\>\phi_{2^{-(j+N)}}*\zeta(t_Q,x_Q)\chi_Q(t,x) dxdt\right|    \nonumber \\
& \le \left\|\left(\sum_{(j,Q)\in B_i' }(|Q|^{-1/2}|\<f,\tau^Q\>|)^{2}\chi_Q\right)^{\frac{1}{2}}\right\|_{L^{q}({\R}^{n+1})}  \nonumber\\
&\qquad \qquad \qquad \qquad
               \times     \left\|\left(\sum_{(j,Q)\in B_i' }|\phi_{2^{-(j+N)}}*\zeta(t_Q,x_Q)|^{2}\chi_Q\right)^{\frac{1}{2}}
\right\|_{L^{q'}({\R}^{n+1})}   \nonumber \\
& \lesssim \left\|\left(\sum_{(j,Q)\in B_i' }(|Q|^{-1/2}|\<f,\tau^Q\>|)^{2}\chi_Q\right)^{\frac{1}{2}}\right\|_{L^{q}}.
\end{align}
The last line here uses  similar computations to the ones in Case 1, which are as follows,
\begin{align*}
& \left\|\left(\sum_{(j,Q)\in B_i' }|\phi_{2^{-(j+N)}}*\zeta(t_Q,x_Q)|^{2}\chi_Q\right)^{\frac{1}{2}}
\right\|_{L^{q'}({\R}^{n+1})}  \\
&  \lesssim \left\|\left(\sum_{(j,Q)\in B_i' }\(M(\varphi_{2^{-(j+N)}}*\zeta)\)^{2}\chi_Q\right)^{\frac{1}{2}}
\right\|_{L^{q'}({\R}^{n+1})}  \\
&   \lesssim \left\|\left(\sum_{j\in \zz}\(M(\varphi_{2^{-j}}*\zeta)\)^{2}\right)^{\frac{1}{2}}
\right\|_{L^{q'}({\R}^{n+1})}  \\
&  \lesssim \left\|\left(\sum_{j\in \zz}|\varphi_{2^{-j}}*\zeta|^{2}\right)^{\frac{1}{2}}
\right\|_{L^{q'}({\R}^{n+1})}  \\
& \lesssim \left\| \zeta \right\|_{L^{q'}({\R}^{n+1})}.
\end{align*}

For any $(j,Q)\in B_i'$ it follows that $Q'\subset \widetilde\Omega_i'$, and hence for $(t,x)\in Q=Q'\times Q''$,
\begin{align*}
 M(\chi_{[Q'\cap\widetilde\Omega_i'\setminus\Omega_i']\times Q''})(t,x)&\ge \frac{|(Q'\setminus\Omega_i')\times Q''|}{|Q'\times Q''|}=\frac{|Q'\backslash \Omega_i'|}{|Q'|}\ge \frac12.
\end{align*}
By the weak-type boundedness of the maximal operator, $|\widetilde\Omega_i'|\less|\Omega_i'|$.  So combining the estimate above with \eqref{6.12} and \eqref{6.13} gives \eqref{claim3} by the following argument,
\begin{align*}
&\left\|\left(\sum_{(j,Q)\in B_i' }(|Q|^{-1/2}|\<f,\tau^Q\>|)^2\chi_Q\right)^{\frac{1}{2}}\right\|_{L^{q}}^q  \\
& \lesssim\int_{\R}\int_{\rr^n}\left(\sum_{(j,Q)\in B_i'}(|Q|^{-1/2}|\<f,\tau^Q\>|)^2 \left( M (\chi_{[Q'\cap\widetilde\Omega_i'\setminus\Omega_i']\times Q''})(t,x)\right)^2 \right)^{\frac{q}{2}}\!dxdt  \\
& \lesssim\int_{\R}\int_{\rr^n}\left(\sum_{(j,Q)\in B_i'}(|Q|^{-1/2}|\<f,\tau^Q\>|)^2 \chi_{[Q'\cap\widetilde\Omega_i'\setminus\Omega_i']\times Q''}(t,x)\right)^{\frac{q}{2}}\! dxdt  \\
& = \int_{{\R}}\left(\int_{\rr^n}\left(\sum_{(j,Q)\in
B_i'}(|Q|^{-1/2}|\<f,\tau^Q\>|)^2\chi_{Q}(t,x)\right)^{\frac{q}{2}}dx\right)\chi_{\widetilde\Omega_i'\setminus\Omega_i'}(t)\,dt \\
&  \lesssim 2^{iq} |\widetilde{\Omega}_i'|.
\end{align*}


Finally,
\begin{align*}
  \|f\|_{\lpq}^p & \le  \sum_i\left\|\sum_{(j,Q)\in B_i'}|Q|^{1/2}\<f,\tau^Q\>\phi_{2^{-(j+N)}}(\cdot-t_Q, \cdot-x_Q)\right\|_{L^pL^q}^p\\
  &\lesssim \sum_i 2^{ip}|\Omega_i'| \lesssim \int_{\rr}\sum_{i=-\infty}^{\lfloor \log_2\|\widetilde g(f)\|_{L_x^{q}}\rfloor+1}2^{ip}dt.\\
   & \lesssim \left\|\|\widetilde{g}(f)\|_{L^q_x}\right\|_{L^p_t}^p \approx \|f\|_{H^{p,q}}^p,
\end{align*}
and  Theorem \ref{h to l:f} follows in this case too.
\end{proof}

\end{document}